\theoremstyle{plain} 
\newtheorem{theorem}{Theorem}[section]
\newtheorem{lemma}[theorem]{Lemma}
\newtheorem{corollary}[theorem]{Corollary}
\theoremstyle{definition}
\newtheorem{definition}[theorem]{Definition}
\newtheorem{example}[theorem]{Example}
\newtheorem{remark}[theorem]{Remark}
\begin{document}

\title{On a Generalization of Heyting Algebras I}

\author[1]{Amirhossein Akbar Tabatabai}
\author[2]{Majid Alizadeh}
\author[2]{Masoud Memarzadeh}
\affil[1]{Institute of Mathematics, Czech Academy of Sciences}
\affil[2]{School of Mathematics, Statistics and Computer Science, College of Science, University of Tehran}

\date{ }

\maketitle

\begin{abstract}
$\nabla$-algebra is a natural generalization of Heyting algebra, unifying many algebraic structures including bounded lattices, Heyting algebras, temporal Heyting algebras and the algebraic presentation of the dynamic topological systems.
In a series of two papers, we will systematically study the algebro-topological properties of different varieties of $\nabla$-algebras. In the present paper, we start with investigating the structure of these varieties by characterizing their subdirectly irreducible and simple elements. Then, we prove the closure of these varieties under the Dedekind-MacNeille completion and provide the canonical construction and the Kripke representation for $\nabla$-algebras by which we establish the amalgamation property for some varieties of $\nabla$-algebras. In the sequel of the present paper, we will complete the study by covering the logics of these varieties and their corresponding Priestley-Esakia and spectral duality theories.\\

\noindent \textbf{Keywords:} Heyting algebras, temporal Heyting algebras, dynamic topological systems, amalgamation property.

\end{abstract}

\section{Introduction}

\subsubsection*{Abstract Implications}

Implication is the logical machinery to \emph{internalize} the \emph{meta-linguistic} provability relation defined \emph{over} a language to a logical constant \emph{inside} the language itself. Generally speaking, the internalization is a process to reflect \emph{some} properties of the relation $\vdash$ by the corresponding properties of its internal version
$\to$.
For instance, the reflexivity of $\vdash$, i.e., the property $A \vdash A$ is reflected by the property $\vdash A \to A$ and the transitivity of $\vdash$, i.e., the property ``\emph{$A \vdash B$ and $B \vdash C$ implies $A \vdash C$}" is reflected by the property $(A \to B) \wedge (B \to C) \vdash (A \to C)$.
Depending on the structures available over the propositions and the extent to which the internalization process reflects the corresponding properties, various types of implications appeared in various logical systems. The spectrum spreads from the standard instances of classical, intuitionistic and substructural implications to  philosophically motivated conditionals \cite{NuteCross}, weak sub-intuitionistic \cite{Vi2,visser1981propositional,Ru} and mathematically motivated implications appearing in provability logic \cite{visser1981propositional} and preservability logic \cite{Iem1,Iem2,LitViss}. To unify all these instances under one umbrella notion, some formalizations have been proposed \cite{Celani-Jansana,Lattar}. Here, we follow the abstract notion of implication introduced in \cite{ImSpace} as a binary operation performing the minimum amount of internalization, i.e., internalizing the fact that the provability relation is actually a preorder. More precisely, an implication $\to$ is a binary operation over a meet semi-lattice $\mathsf{A}$, decreasing in its first argument and increasing in its second satisfying the \emph{internal reflexivity} $(a \to a)=1$ and the \emph{internal transitivity} $(a \to b) \wedge (b \to c) \leq (a \to c)$, for any $a, b, c \in \mathsf{A}$.\footnote{Technically, the definition presented in \cite{ImSpace} uses a monoidal poset rather than a meet semi-lattice to cover the substructural implications. However, in this paper, we restrict ourselves to this special case as we are only interested in the so-called structural framework.}

Among all possible implications, the Heyting implication and its defining adjunction have a special mathematical and philosophical position. The reason is the role that the adjunction plays in the internalization process by \emph{reducing} the internal version of a property into the property itself. Let us explain this role by a concrete example.
Consider the existence of the binary meets identified by the \emph{equivalence} between ``$a \leq b \wedge c$" and ``(\emph{$a \leq b$ and $a \leq c$})" and note that its internal version is the \emph{equality} $(a \to (b \wedge c))=(a \to b) \wedge (a \to c)$.
To prove the internal version for the Heyting implication, we show the equivalence between $d \leq a \to (b \wedge c)$ and $d \leq (a \to b) \wedge (a \to c)$, for an arbitrary $d$. By the adjunction, $d \leq a \to (b \wedge c)$ is equivalent to $d \wedge a \leq b \wedge c$. By the original property of the existence of the meets, this is equivalent to ($d \wedge a \leq b$ and $d \wedge a \leq c$) and by the adjunction again and using the properties of the meet, we reach $d \leq (a \to b) \wedge (a \to c)$. It is clear that the adjunction reduces the internal version of the existence of the binary meets to its original version. 
The examples of such reductions abound. For instance, think about the property $(a \vee b) \to c=(a \to c) \wedge (b \to c)$ for the Heyting implication as the internalization of the existence of the binary joins or $(a \to (b \supset c))=(a \wedge b) \to c$, as the internalization of the existence of the Heyting implication $\supset$. 

Compared to the well-behaved Heyting implication, an abstract implication is known to be rather ill-behaved. The reason, we believe, is the lack of the adjunction and the natural reduction process it provides. Without a reduction procedure, we are forced to select the internalized properties ourselves and the choice can be quite artificial if we do not have a general guiding principle in the selection process. For instance, internalizing the structure of a bounded distributive lattice, one may wonder if the internalization of both the existence of the binary meets and the binary joins, i.e., satisfying $(a \to (b \wedge c))=(a \to b) \wedge (a \to c)$ and $(a \vee b) \to c=(a \to c) \wedge (b \to c)$ is what one must expect or having one without the other is also meaningful. The reader probably shares our feeling that the answer to this problem is quite far from clear. \\
Apart from the importance of the reduction processes, there are also other and probably more conceptual arguments in favor of the adjunctions. The most important one is the role that the adjunction plays in specifying the meaning of a logical constant. To explain, an adjuncton is nothing but a full pair of introduction and elimination rules providing a full characterization of how the constant can be introduced on the one hand and how it can be used on the other. This full characterization specifies the behavior of the logical constant which we can consider as the (proof-theoretic) meaning of the constant. In this sense, the lack of the adjunction leads to an incomplete determination of the meaning of a connective and it is no surprise that such a connective is ill-behaved to some extent. 

To overcome this lack of adjunction, \cite{ImSpace} introduced $\nabla$-algebras\footnote{In \cite{ImSpace}, $\nabla$-algebras are called temporal algebras and they are again based on monoidal posets to also cover the substructural instances.} as a generalization of Heyting algebras importing many of their nice properties into the realm of the abstract implications. 
Formally, a $\nabla$-algebra is a tuple $(\mathsf{A}, \nabla, \to)$, where $\mathsf{A}$ is a bounded lattice over the set $A$, $\nabla: A \to A$ is a unary operation and $\to \, : A \times A \to A$ is a binary operation such that $\nabla(-) \wedge a \dashv a \to (-)$, i.e., $\nabla(-) \wedge a$ is left adjoint to $a \to (-)$, for any $a \in A$.  $\nabla$-algebras are the clear generalization of both Heyting algebras ($\nabla=id$) and bounded lattices ($\nabla a=0$ and $a \to b=1$). It is easy to see that in any $\nabla$-algebra, $\to$ is an abstract implication empowered by the adjunction to run a natural reduction process on the one hand and provide a complete proof-theoretic meaning for the implication, on the other.
The main feature of $\nabla$-algebras, however, is their ability to fuse the full generality of the abstract implications to the nice behavior of the Heyting implications. To be more precise, $\nabla$-algebras provide not only a nice family of implications but also a sufficiently general one powerful enough to represent all abstract implications 
up to a correcting factor,
see \cite{ImSpace} and also Theorem \ref{NablaImpliesStrong} in the present paper. Having this representation theorem, it is then mathematically justified to employ the strategy of studying the well-behaved $\nabla$-algebras in order to understand all abstract implications. This is exactly the task that the present series of two papers is devoted to. Before explaining how, let us complete the list of motivations for $\nabla$-algebras by providing two other minor yet interesting instances. 
\subsubsection*{Intuitionistic Temporal Logics}
Temporal logics form an interesting family of modal logics addressing the temporal aspects of the natural language. The intuitionistic version of these logics addressing the combinations of knowledge and various temporal modalities including ``past" and ``present" \cite{Ewald,de2017constructive}, ``next" \cite{Davies2017,Koj}, ``henceforth" \cite{Kamide} and their combinations \cite{Maier,balbiani2018bisimulations,boudou2017decidable} is also introduced and investigated in the literature.
The most basic system among these logics is the one only consisting of one \emph{existential past} modality $\Diamond$ and one \emph{universal future} modality $\Box$. Putting algebraically, the logic is nothing but a \emph{temporal Heyting algebra}, i.e., a Heyting algebra augmented with an adjunction $\Diamond \dashv \Box$ \cite{ModEsakia,Chajda}. Many extensions of temporal Heyting algebras, their Sahlqvist correspondence theory and their canonicity are studied, e.g. see \cite{Ghil,Pal}. \\
Having this temporal setting in mind, one can read $\nabla$-algebras as a generalization of temporal Heyting algebras weakening the base from a Heyting algebra to a bounded lattice and strengthening the unary $\Box$ to a binary $\to$. Here, $\nabla a$ is interpreted similar to $\Diamond a$ as ``\emph{$a$ was true at some point in the past}"  and $a \to b$ means ``\emph{$a$ implies $b$, always in the future}". As expected, when the base lattice of a $\nabla$-algebra has the Heyting structure, we regain our temporal Heyting algebras. 
The reason is the fact that in the presence of the Heyting implication $\supset$, the implication of a $\nabla$-algebra satisfies $a \to b=1 \to (a \supset b)$. Therefore, setting $\Box a=1 \to a$, the structure of a Heyting $\nabla$-algebra simplifies to a Heyting algebra augmented with the adjunction $\nabla \dashv \Box$. Having this observation, it is safe to say that $\nabla$-algebras genuinely generalize temporal Heyting algebras and hence their study encompasses the algebraic study of the basic intuitionistic temporal logic. However, it is also important to keep in mind that $\nabla$-algebras are more general than temporal Heyting algebras as in lack of the Heyting implication, the binary implication $\to$ is more expressive and forces more structure than the unary $\Box$. To see why, compare the structure of a $\nabla$-algebra to a bounded lattice augmented with an adjunction $\nabla \dashv \Box$. Trivializing $\nabla$ to the identity function turns them into a Heyting algebra and a bounded lattice, respectively, where the former has far more structure than the latter. 



\subsubsection*{The Logic of Dynamic Topological Systems}
A dynamic topological system is a pair $(X, f)$, where $X$ is a topological space formalizing the \emph{state space} of the system and $f: X \to X$ is a continuous map encoding its \emph{dynamism} \cite{Akin}. To describe the dynamic topological systems, \cite{artemov1997modal} enhanced the logical system $\mathsf{S4}$ by a ``next" modality $\bigcirc$,
where the former described the topological space and the latter captured the effect of the inverse image of the map $f$. Later,
\cite{DTL} enriched the language further to make it expressive enough to address the asymptotic behavior of the systems, as well. The logic turned out to be undecidable \cite{Und}. In the quest of finding a rich enough decidable fragment and following an unpublished suggestion of Kremer \cite{kremer2004small},  Fern\'{a}ndez-Duque \cite{Fer} introduced the intuitionistic version of the dynamic topological logic, where $\mathsf{S4}$ is replaced by the intuitionistic propositional logic to describe the topological state space.
Inspired by \cite{Fer}, it is natural to claim that a very basic algebraic setting to formalize the dynamic topological system $(X, f)$ is a Heyting algebra representing the locale $\mathcal{O}(X)$ augmented with a modality $\nabla$ representing the map $f^{-1}$. To identify the properties of this modality, by imitating the behavior of $f^{-1}$, we expect $\nabla$ to preserve the finite meets and the arbitrary joins. The former is easy to axiomatize algebraically. To capture the latter in an elementary way, first note that as $f^{-1}$ preserves all the joins over the complete lattice $\mathcal{O}(X)$, it must have a right adjoint $f_*$. Thus, to reflect the behavior of $f^{-1}$ in an elementary way, it is enough to also add $\Box$ to the signature to mimic the map $f_*$. Therefore, the structure we are looking for is simply a Heyting algebra augmented with an adjunction $\nabla \dashv \Box$. As we saw before, such a structure is nothing but a Heyting $\nabla$-algebra, this time with the additional condition that $\nabla$ must respect all the finite meets. Calling this property the \emph{normality} condition, we can conclude that normal Heyting $\nabla$-algebras provide an algebraic presentation for the dynamic topological systems. Using this observation, one can expand the intuition to read any normal $\nabla$-algebra (not necessarily Heyting) as some sort of dynamic system where the base lattice captures the observables of the system and $\nabla$ encodes its dynamism. 

\subsubsection*{Our Contribution}

Inspired by the above motivations, in a series of two papers, we provide an extensive algebro-topological study of different varieties of 
$\nabla$-algebras. In the present paper, after covering the preliminary concepts in Section \ref{Prel} and introducing some varieties of $\nabla$-algebras in Section \ref{IntroducingAlgebras}, we study the structure of these varieties in Section \ref{SimpleAlg} and provide a characterization for subdirectly irreducible and simple normal distributive $\nabla$-algebras. In Section \ref{Dedekind}, we cover the Dedekind-MacNeille completion of $\nabla$-algebras to show how well-behaved the notion of a $\nabla$-algebra is. We will also need this completion to prove stronger forms of completeness in the sequel of the present paper. Section \ref{Kripke} is devoted to Kripke frames and the usual canonical construction providing an interesting representation theorem for distributive normal $\nabla$-algebras as the \emph{dynamic posets}, i.e, the posets equipped with an order-preserving map over them. The representation then helps to prove the amalgamation property for some varieties of normal distributive $\nabla$-algebras. This completes the scope of the present paper. In its sequel, we introduce some logical systems for which $\nabla$-algebras provide an algebraic semantics. We prove the deductive interpolation property for some of these logics. We also provide Kripke and topological semantics for them and prove a corresponding soundness and completeness theorem. We will continue by providing a full duality theory for the distributive $\nabla$-algebras, unifying Priestley and Esakia dualities as the two instances of one duality theory. This duality theory then leads to a corresponding spectral duality \`{a} la \cite{Bezh}. We will use the latter to provide a ring-theoretic representation for normal distributive $\nabla$-algebras interpreting the algebra as a \emph{dynamic ring}, i.e., a ring equipped with a ring endomorphism. 

\section{Preliminaries} \label{Prel}

In this section, we will present the preliminary concepts and their corresponding theorems that we need throughout the paper. In the algebraic part that is our main emphasis, we will be quite extensive. In the topological part, though, we only recall some useful points.

A \emph{poset} $\mathcal{P}=(P, \leq)$ is a pair of a set and a reflexive, anti-symmetric and transitive binary relation $\leq \; \subseteq P^2$. By $\mathcal{P}^{op}$, we mean the poset $(P, \geq)$, namely $P$ with the opposite order. A subset $S \subseteq P$ is called a \emph{downset}, if it is downward closed, i.e., for every $x \in S$ and any $y \in P$, if $y \leq x$, we have $y \in S$. An \emph{upset} is defined dually. For any subset $S \subseteq P$, by its downset, denoted by $\downarrow \!\!S$, we mean the least downset extending $S$ and by the upset of $S$, denoted by $\uparrow \!\!S$, we mean the least upset extending $S$. When $S=\{a\}$, we denote $\downarrow \!\!S$ and $\uparrow \!\!S$ by $\downarrow \!\!a$ (or $(a]$) and $\uparrow \!\!a$ (or $[a)$), respectively. The set of all upsets and downsets of $(P, \leq)$ are denoted by $U(P, \leq)$ and $D(P, \leq)$, respectively. For any $S \subseteq P$, if the greatest lower bound of $S$ exists, it is called the \emph{meet} of the elements of $S$ and is denoted by $\bigwedge S$. If $S=\{a, b\}$, the meet $\bigwedge S$ is usually denoted by $a \wedge b$. A poset is called \emph{complete} if for any set $S \subseteq P$, the meet $\bigwedge S$ exists. If the least upper bound of $S$ exists, it is called the \emph{join} of the elements of $S$ and is denoted by $\bigvee S$. If $S=\{a, b\}$, the join $\bigvee S$ is usually denoted by $a \vee b$. In any complete poset all the joins also exist. A poset is called a \emph{meet semi-lattice}, if for any $a, b \in P$, the meet $a \wedge b$ exists. It is called bounded if it also has a greatest element, denoted by $1$. A meet semi-lattice is called a \emph{lattice}, if for any $a, b \in P$, the join $a \vee b$ also exits. A lattice is called \emph{bounded}, if it has both the greatest and the least elements, denoted by $1$ and $0$, respectively. A bounded lattice is called \emph{distributive}, if $
a \wedge (b \vee c)=(a \wedge b) \vee (a \wedge c)$,
for any $a, b, c \in P$. A bounded lattice $(P, \leq)$ is called a \emph{locale}, if it is complete and $a \wedge \bigvee_{b \in S} b =\bigvee_{b \in S} (a \wedge b)$, for any $a \in P$ and any $S \subseteq P$.
We will use the sanserif font as in $\mathsf{A}$ for the bounded lattices and for brevity, when we are working with a bounded lattice $\mathsf{A}$, without any further explanation, we use the normal $A$ as the base set of $\mathsf{A}$.
A subset of a bounded lattice is called a \emph{filter}, if it is an upset and closed under all finite meets. The filters in the form $[a)$ are called the \emph{principal filters}. The set of all filters of the lattice $\mathsf{A}$ is denoted by $\mathcal{F}(\mathsf{A})$. A filter is called \emph{prime} if $0 \notin P$ and $a \vee b \in P$ implies either $a \in P$ or $b \in P$. The set of all prime filters of a lattice $\mathsf{A}$ is denoted by $\mathcal{F}_p(\mathsf{A})$. A subset of $\mathsf{A}$ is called an \emph{ideal}, if it is a downset and closed under all finite joins. The ideals in the form $(a]$ are called the \emph{principal ideals}. The set of all ideals of a lattice $\mathsf{A}$ is denoted by $\mathcal{I}(\mathsf{A})$. The following theorem is a useful tool when the bounded lattice is distributive.
\begin{theorem}\label{PrimeFilterTheorem}\cite{Esakia}(Prime filter theorem)
Let $\mathsf{A}$ be a distributive lattice, $F$ be a filter and $I$ be an ideal such that $F \cap I=\varnothing$. Then there exists a prime filter $P$ such that $F \subseteq P$ and $P \cap I=\varnothing$.
\end{theorem}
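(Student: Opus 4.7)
The plan is to run the classical Zorn's lemma argument on the collection of filters that extend $F$ and remain disjoint from $I$. More precisely, I would let
\[
\mathcal{C} = \{\, G \in \mathcal{F}(\mathsf{A}) \mid F \subseteq G \text{ and } G \cap I = \varnothing \,\}
\]
ordered by inclusion. It is nonempty since $F \in \mathcal{C}$, and for any chain $\{G_\alpha\} \subseteq \mathcal{C}$ the union $\bigcup_\alpha G_\alpha$ is again a filter (upward closure and closure under finite meets are witnessed in some single $G_\alpha$) that contains $F$ and avoids $I$. Zorn's lemma then yields a maximal element $P \in \mathcal{C}$.

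The key step is to verify that this maximal $P$ is prime. First, $0 \notin P$ because $0 \in I$ (ideals contain $0$) and $P \cap I = \varnothing$. Suppose, for contradiction, that $a \vee b \in P$ while $a \notin P$ and $b \notin P$. Form the filters $P_a = {\uparrow}(P \cap [a))$ and $P_b = {\uparrow}(P \cap [b))$, i.e., the filters generated by $P \cup \{a\}$ and $P \cup \{b\}$; each strictly contains $P$, so by maximality each must meet $I$. Hence there exist $p, q \in P$ and $i, j \in I$ with $p \wedge a \leq i$ and $q \wedge b \leq j$. Setting $r = p \wedge q \in P$, we get $r \wedge a \leq i$ and $r \wedge b \leq j$, and thus
\[
r \wedge (a \vee b) = (r \wedge a) \vee (r \wedge b) \leq i \vee j
\]
by distributivity. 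The left-hand side lies in $P$ (since $r \in P$ and $a \vee b \in P$), the right-hand side lies in $I$ (ideals are closed under finite joins and downward closed), contradicting $P \cap I = \varnothing$.

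The main obstacle is ensuring that the generated filters $P_a$ and $P_b$ are used correctly: one must extract concrete elements $p \wedge a \leq i$ and $q \wedge b \leq j$ from the abstract statements $P_a \cap I \neq \varnothing$ and $P_b \cap I \neq \varnothing$, which requires noting that any element of ${\uparrow}(P \cap [a))$ dominates some meet of a finite subset of $P$ with $a$, and that $P$ is closed under finite meets so this subset can be collapsed to a single $p \in P$. After that, the only genuine use of hypothesis is distributivity, which is what makes this fail for general bounded lattices.
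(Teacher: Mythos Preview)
Your argument is correct: this is the standard Zorn's lemma proof of the prime filter theorem for distributive lattices, and every step goes through as you describe. One minor notational quibble is that ${\uparrow}(P \cap [a))$ is not literally the filter generated by $P \cup \{a\}$ (the former is the upset of those elements of $P$ that lie above $a$), but since you immediately gloss this as ``the filters generated by $P \cup \{a\}$ and $P \cup \{b\}$'' and then correctly extract witnesses of the form $p \wedge a \leq i$, the intended meaning is clear and the argument is unaffected.

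As for comparison with the paper: there is nothing to compare. The paper does not prove this theorem at all; it is stated in the Preliminaries section as a known result and attributed to \cite{Esakia}. Your proof is exactly the classical one that appears in that reference and in any standard treatment of distributive lattices.
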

Let $(P, \leq_P)$ and $(Q, \leq_Q)$ be two posets and $f: P \to Q$ be a function. It is called a \emph{order-preserving map}, if it preserves the order, meaning $f(p) \leq_Q f(q)$ for any $p \leq_P q$. An order-preserving map is called an \emph{order embedding}, if for any $p, q \in P$, the inequality $f(p) \leq_Q f(q)$ implies $p \leq_P q$. An order-preserving map between two meet semi-lattices is called a \emph{meet semi-lattice map}, if it preserves the binary meets. Similarly, an order-preserving map is called a \emph{bounded meet-semilattice map}, if it also preserves the greatest element. It is called a lattice map, if it preserves both meets and joins and it is a \emph{locale map} if it preserves all finite meets and all possible joins. 
For two order-preserving maps $f: P \to Q$ and $g: Q \to P$, the map $f$ is called the \emph{left adjoint} for $g$ (and $g$ is called the \emph{right adjoint} for $f$) denoted by \emph{$f \dashv g$} iff
\[
f(a) \leq_Q b \;\;\; \text{iff} \;\;\; a \leq_P g(b),
\]
for any $a \in P$ and $b \in Q$. In such a situation, the pair $(f, g)$ is called an \emph{adjunction}. We will use the following two easy theorems throughout the paper.

\begin{theorem}\cite{Bor1}\label{InjSurjforAdj}
Let $(P, \leq_P)$ and $(Q, \leq_Q)$ be two posets and $f: P \to Q$ and $g: Q \to P$ be two order-preserving maps such that $f \dashv g$. Then:
\begin{itemize}
\item
$fg(q) \leq_Q q$ and $p \leq_P gf(p)$, for any $p \in P$ and $q \in Q$.
\item
$fgf(p)=f(p)$ and $gfg(q)=g(q)$, for any $p \in P$ and $q \in Q$. 
\item
$f$ is one-to-one  iff $g$ is surjective iff $gf(p)=p$, for any $p \in P$.
\item
$f$ is surjective iff $g$ is one-to-one iff $fg(q)=q$, for any $q \in Q$.
\end{itemize}
\end{theorem}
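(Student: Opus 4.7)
The plan is to treat the four bullets in order, using only the defining biconditional $f(a)\leq_Q b \iff a\leq_P g(b)$ and the order-preservingness of $f$ and $g$; each subsequent bullet will be derived from the previous ones.

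First, for the unit/counit inequalities $p\leq_P gf(p)$ and $fg(q)\leq_Q q$, I would instantiate the adjunction at carefully chosen arguments: setting $b=f(p)$ in the biconditional gives $f(p)\leq_Q f(p) \iff p\leq_P gf(p)$, and the left-hand side is reflexivity; dually, setting $a=g(q)$ gives $fg(q)\leq_Q q \iff g(q)\leq_P g(q)$. This is the only place the adjunction itself is really used.

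Next, the triangle identities $fgf(p)=f(p)$ and $gfg(q)=g(q)$ follow by sandwiching. Applying $f$ (order-preserving) to $p\leq_P gf(p)$ gives $f(p)\leq_Q fgf(p)$, while specializing the counit inequality at $q=f(p)$ gives $fgf(p)\leq_Q f(p)$; antisymmetry in $(Q,\leq_Q)$ closes the argument. The second identity is entirely dual, applying $g$ to $fg(q)\leq_Q q$ and then specializing the unit at $p=g(q)$.

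For the third bullet, I would prove the three implications cyclically. If $gf(p)=p$ for all $p$, then $f$ is injective (cancel $g$ on the left of $f(p)=f(p')$) and $g$ is surjective (every $p$ equals $g(f(p))$). If $f$ is injective, then from $f(gf(p))=fgf(p)=f(p)$ proved in the previous paragraph we cancel $f$ to get $gf(p)=p$. Finally, if $g$ is surjective, write an arbitrary $p$ as $g(q)$ and compute $gf(p)=gfg(q)=g(q)=p$ using the second triangle identity. The fourth bullet is obtained by the same argument with the roles of $f,g$ and $P,Q$ swapped, using $fgf=f$ and $fg(q)\leq_Q q$ in place of their duals.

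There is no real obstacle here; the whole theorem is a finite diagram chase from the adjunction bijection. The one micro-subtlety worth flagging is the direction of the inequalities: I must remember that $f$ is the \emph{left} adjoint, so the unit lives in $P$ and the counit in $Q$, which is what forces the particular substitutions $b=f(p)$ and $a=g(q)$ above. Once those two inequalities are in hand, everything else is cancellation combined with the already-established triangle identities.
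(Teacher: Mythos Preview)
Your proof is correct and entirely standard. Note, however, that the paper does not supply its own proof of this theorem: it is stated with a citation to \cite{Bor1} and used as a preliminary fact, so there is no in-paper argument to compare against. Your derivation---instantiating the adjunction biconditional to obtain the unit and counit inequalities, then using antisymmetry for the triangle identities, and finally chasing the injectivity/surjectivity equivalences through those identities---is exactly the textbook route one finds in the cited reference.
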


\begin{theorem} \cite{Bor1} (Adjoint functor theorem for posets) \label{AdjointFunctorTheorem}
Let $(P, \leq_P)$ be a complete poset and $(Q, \leq_Q)$ be a poset. Then an order-preserving map $f: (P, \leq_P) \to (Q, \leq_Q)$ has a right (left) adjoint iff it preserves all joins (meets). 
\end{theorem}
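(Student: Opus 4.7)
The plan is to prove only the right-adjoint direction in detail, as the left-adjoint version follows by passing to the opposite posets (since completeness of $P$ gives both all joins and all meets in $P$).

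For the easy direction, suppose $f \dashv g$. Given any $S \subseteq P$, monotonicity of $f$ already makes $f(\bigvee S)$ an upper bound of $f(S)$. To see it is the least, take any upper bound $b$ of $f(S)$; then $f(s) \leq_Q b$ for every $s \in S$, so by the adjunction $s \leq_P g(b)$ for every $s \in S$, whence $\bigvee S \leq_P g(b)$ and finally, by the adjunction in the other direction, $f(\bigvee S) \leq_Q b$. Thus $f(\bigvee S) = \bigvee f(S)$.

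For the hard direction, assume $f$ preserves all joins. Since $P$ is complete, I define the candidate right adjoint by
\[
g(q) \;=\; \bigvee \{\, p \in P : f(p) \leq_Q q \,\},
\]
the join existing thanks to completeness. Monotonicity of $g$ is immediate from the fact that the defining set is enlarged when $q$ grows. For the adjunction, if $f(p) \leq_Q q$ then $p$ lies in the set whose join defines $g(q)$, so $p \leq_P g(q)$. Conversely, if $p \leq_P g(q)$, then applying $f$ and using join-preservation yields
\[
f(p) \;\leq_Q\; f(g(q)) \;=\; \bigvee \{\, f(p') : f(p') \leq_Q q \,\} \;\leq_Q\; q,
\]
where the last inequality holds because $q$ is an upper bound of the set in question. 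This establishes $f(p) \leq_Q q \iff p \leq_P g(q)$, so $f \dashv g$.

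The main subtlety, and the only place where completeness is truly used, is the definition of $g$: one must know that the set $\{p : f(p) \leq_Q q\}$ has a join in $P$ for every $q \in Q$, since this set need not be bounded in size or shape. Everything else is formal manipulation of the adjunction. The left-adjoint version is obtained by applying the right-adjoint version to $f^{op}: P^{op} \to Q^{op}$, noting that joins in $P^{op}$ are meets in $P$ (which exist because $P$ is complete) and that $f$ preserves meets iff $f^{op}$ preserves joins.
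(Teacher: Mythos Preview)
Your proof is correct and is the standard argument for the adjoint functor theorem in the poset setting. Note, however, that the paper does not actually supply a proof of this statement: it is quoted as a preliminary result with a reference to \cite{Bor1}, so there is no ``paper's own proof'' to compare against. What you have written is essentially the textbook proof one finds in Borceux or any order-theory reference.
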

Let $X$ be a topological space. We denote the locale of its open subsets by $\mathcal{O}(X)$. A topological space is called $T_0$, if for any two different points $x, y \in X$, there is an open set which contains one of these points and not the other. It is called $T_D$, if for any $x \in X$, there is an open $U$ such that $x \in U$ and $U-\{x\}$ is open, as well. A continuous map is called a topological embedding if $f$ induces a homeomorphism between $X$ and $f[X]$. The following provides a point-free version of surjectivity and embeddingness.

\begin{theorem}\label{InjSurjforContinuous} \cite{StoneSpaces}
Let $X$ and $Y$ be two topological spaces and $f: X \to Y$ be a continuous map. Then, 
\begin{itemize}
\item
If $f$ is surjective, then $f^{-1}: \mathcal{O}(Y) \to \mathcal{O}(X)$ is one-to-one. The converse is true, if $Y$ is $T_D$.
\item
If $f$ is a topological embedding, then $f^{-1}: \mathcal{O}(Y) \to \mathcal{O}(X)$ is surjective. The converse is also true, if $X$ is $T_0$.
\end{itemize}
\end{theorem}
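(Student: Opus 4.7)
The plan is to treat the four implications separately. The two forward directions are essentially unpackings of the definitions, while the two converses use the extra separation hypotheses $T_D$ and $T_0$ in essential ways.

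For the forward directions, suppose $f$ is surjective and $U, V \in \mathcal{O}(Y)$ with $f^{-1}(U) = f^{-1}(V)$. For each $y \in Y$, I pick a preimage $x$ and read off $y \in U \iff x \in f^{-1}(U) \iff x \in f^{-1}(V) \iff y \in V$, giving $U = V$. If $f$ is a topological embedding, I use that the subspace topology on $f[X]$ consists exactly of the sets $W \cap f[X]$ for $W \in \mathcal{O}(Y)$: for any open $V \subseteq X$, the image $f[V]$ is open in $f[X]$ (since $f$ is a homeomorphism onto its image), so $f[V] = W \cap f[X]$ for some $W$, and then $V = f^{-1}(f[V]) = f^{-1}(W)$ exhibits $V$ as a preimage.

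The converses are the more delicate parts. For the $T_D$ converse I would argue contrapositively: given $y \in Y \setminus f[X]$, the $T_D$ axiom supplies an open $U \ni y$ with $U \setminus \{y\}$ also open; since $y$ lies outside the image, the two distinct opens $U$ and $U \setminus \{y\}$ have the same preimage under $f$, refuting injectivity of $f^{-1}$. For the $T_0$ converse, assuming $f^{-1}$ is surjective, I would first extract injectivity of $f$: for distinct $x_1, x_2 \in X$, the $T_0$ axiom produces an open $V$ separating them, and writing $V = f^{-1}(W)$ yields an open $W$ separating $f(x_1)$ from $f(x_2)$. To promote the injective continuous $f$ to an embedding, I would verify that $f \colon X \to f[X]$ is open: an arbitrary open $V \subseteq X$ can be written as $f^{-1}(W)$, and the standard identity $f[f^{-1}(W)] = W \cap f[X]$ realizes $f[V]$ as an open of the subspace topology.

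The main obstacle I anticipate is presenting the $T_D$ trick crisply and being careful with the identity $f[f^{-1}(W)] = W \cap f[X]$, which requires no hypotheses on $f$; beyond these the argument is a routine unwinding of definitions.
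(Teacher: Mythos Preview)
Your proposal is correct in all four implications; the arguments are standard and cleanly executed, including the $T_D$ trick of using the two opens $U$ and $U\setminus\{y\}$ with identical preimage, and the identity $f[f^{-1}(W)]=W\cap f[X]$ to obtain openness of $f$ onto its image. Note, however, that the paper does not actually prove this theorem: it is stated in the preliminaries with a citation to \cite{StoneSpaces} and no argument is given, so there is no in-paper proof to compare your approach against.
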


\section{$\nabla$-Algebras}\label{IntroducingAlgebras}
In this section, we first introduce  $\nabla$-algebras as the main notion of the present paper. Then, we present some interesting families of $\nabla$-algebras and two canonical types of examples to connect them to basic temporal Heyting algebras and dynamic topological systems. We then proceed by proving some basic properties of the introduced families and showing that they all form a variety.
Finally, we recall the notion of an abstract implication and its representation by $\nabla$-algebras. As already mentioned in Introduction, this representation serves as the main motivation for $\nabla$-algebras. 

\subsection{The Basics of $\nabla$-algebras} \label{TheBasicsofNablaAlgebra}
Let us start this subsection with the definition of a $\nabla$-algebra:

\begin{definition}
Let $\mathsf{A}=(A, \wedge, \vee, 0, 1)$ be a bounded lattice with the lattice order $\leq$. A tuple $\mathcal{A}=(\mathsf{A}, \nabla, \to)$ is called a $\nabla$-algebra if $\nabla c \wedge a \leq b$ is equivalent to $c \leq a \to b$, for any $a, b, c \in A$. In any $\nabla$-algebra, $\Box a$ is defined as $1 \to a$.
\end{definition}

\begin{example}
Any bounded lattice $\mathsf{A}$ with the operations $\nabla$ and $\to$ defined by setting $\nabla a=0$ and $a \to b=1$, for any $a, b \in A$, forms a $\nabla$-algebra, because
\[
0=\nabla c \wedge a \leq b \;\;\; \text{iff} \;\;\; c \leq a \to b=1.
\] 
To have another example, any Heyting algebra with $\nabla a=a$ and $a \to b=a \supset b$, where $\supset$ is the Heyting implication, forms a $\nabla$-algebra. Therefore, $\nabla$-algebras are a common generalization of the bounded lattices and the Heyting algebras. 
\end{example}
Before moving to the more interesting examples, let us first introduce some specific families of $\nabla$-algebras. Later, in Example \ref{TopologicalSpace} and Example \ref{KripkeFrames}, we will see the concrete situations from which these families are abstracted.
\begin{definition}
Let $\mathcal{A}=(\mathsf{A}, \nabla, \to)$ be a $\nabla$-algebra:
\begin{description}
\item[$(D)$]
If $\mathsf{A}$ is distributive, then the $\nabla$-algebra is called \emph{distributive}.
\item[$(H)$]
If $\mathsf{A}$ has the Heyting structure, then the $\nabla$-algebra is called \emph{Heyting}. This does not mean that $\to$ is the Heyting implication. It only dictates that the Heyting implication exists over $\mathsf{A}$.
\item[$(N)$]
If $\nabla$ commutes with all finite meets, i.e., $\nabla 1=1$ and $\nabla (a \wedge b)=\nabla a \wedge \nabla b$, for any $a, b \in A$, then the $\nabla$-algebra is called \emph{normal}.
\item[$(R)$]
If $a \leq \nabla a$, for any $a \in A$, the $\nabla$-algebra is called \emph{right}.
\item[$(L)$]
If $\nabla a \leq a$, for any $a \in A$, the $\nabla$-algebra is called \emph{left}.
\item[$(Fa)$]
If $\nabla$ is surjective, the $\nabla$-algebra is called \emph{faithful}.
\item[$(Fu)$]
If $\Box$ is surjective, the $\nabla$-algebra is called \emph{full}.
\end{description}
\end{definition}

\begin{remark}
The Heyting implication is a structure and not a mere property. Therefore, ``Heyting $\nabla$-algebra" is an ambiguous notion, as it is not clear whether we include the Heyting implication in the signature of the algebra or not. To solve this ambiguity, when we mean a $\nabla$-algebra that is also Heyting, we call it ``\emph{Heyting $\nabla$-algebra}" and when we mean an algebra in the form $(\mathsf{A}, \nabla, \to, \supset)$, where $(\mathsf{A}, \nabla, \to)$ is a $\nabla$-algebra and $\supset$ is the Heyting implication over $\mathsf{A}$, we call the structure ``\emph{explicitly Heyting $\nabla$-algebra}". This difference in the signature is important when we investigate the algebraic or categorical properties of the algebras.
\end{remark}

\begin{definition}
For any $C \subseteq \{D, H, N, R, L, Fa, Fu\}$, by $\mathcal{V}(C)$ we mean the class of all $\nabla$-algebras with the properties described in the set $C$. For instance, $\mathcal{V}(\{N, D\})$ is the class of all normal distributive $\nabla$-algebras. Sometimes, for simplicity, we omit the brackets. For instance, we write $\mathcal{V}(N, D)$ for $\mathcal{V}(\{N, D\})$ and if $X \in \{D, H, N, R, L, Fa, Fu\}$, we write $\mathcal{V}(X, C)$ for $\mathcal{V}(\{X\} \cup C)$. By $\mathcal{V}_H(C)$, we mean the class of all explicitly Heyting $\nabla$-algebras satisfying the conditions in $C$. For two $\nabla$-algebras $\mathcal{A}=(\mathsf{A}, \nabla_{\mathcal{A}}, \to_{\mathcal{A}})$ and $\mathcal{B}=(\mathsf{B}, \nabla_{\mathcal{B}}, \to_{\mathcal{B}})$, by a \emph{$\nabla$-algebra morphism}, we mean a bounded lattice morphism $f: \mathsf{A} \to \mathsf{B}$ that also preserves $\nabla$ and $\to$, meaning that $f(\nabla_{\mathcal{A}} a)=\nabla_{\mathcal{B}} f(a)$ and $f(a \to_{\mathcal{A}} b)=f(a) \to_{\mathcal{B}} f(b)$, for any $a, b \in A$. If both $\nabla$-algebras are explicitly Heyting and $f$ also preserves the Heyting implication, it is called a \emph{Heyting $\nabla$-algebra morphism}. A $\nabla$-algebra morphism is called an \emph{embedding} if it is injective. For any $C \subseteq \{D, H, N, R, L, Fa, Fu\}$, the class $\mathcal{V}(C)$ of $\nabla$-algebras together with the $\nabla$-algebra morphisms forms a category, denoted by $\mathbf{Alg}_{\nabla}(C)$. Similarly, the class  $\mathcal{V}_H(C)$ of explicitly Heyting $\nabla$-algebras with Heyting morphisms forms a category, denoted by $\mathbf{Alg}^H_{\nabla}(C)$.
\end{definition}

The following two examples are borrowed from \cite{ImSpace} to provide some natural families of $\nabla$-algebras.

\begin{example}\label{TopologicalSpace}\emph{(Dynamic Topological Systems)}
Let $X$ be a topological space and $f: X \to X$ be a continuous function. Then, the pair $(X, f)$ is called a \emph{dynamic topological system}. If $f$ is a topological embedding (resp. surjection), the dynamic topological system is called \emph{faithful} (resp. \emph{full}).  For any dynamic topological system $(X, f)$, define $\to_f$ over $\mathcal{O}(X)$ by $U \to_f V=f_* (int(U^c \cup V))$, where $int$ is the interior operation, $U^c$ is the complement of $U$ and $f_*: \mathcal{O}(X) \to \mathcal{O}(X)$ is the right adjoint of $f^{-1}$. Note that using Theorem \ref{AdjointFunctorTheorem}, the map $f_*$ exists, as $f^{-1}$ preserves the arbitrary unions and $\mathcal{O}(X)$ is complete. Then, the structure $(\mathcal{O}(X), f^{-1}, \to_f)$ is a normal Heyting $\nabla$-algebra, simply because for any $U, V, W \in \mathcal{O}(X)$, we have
\[
f^{-1}(W) \cap U \subseteq V \;\;\; \text{iff} \;\;\; W \subseteq f_*(int(U^c \cup V))=U \to_f V,
\] 
and $f^{-1}$ commutes with finite intersections. It is also worth mentioning that adding the Heyting implication $\supset$ to the structure results in the explicitly Heyting normal $\nabla$-algebra $(\mathcal{O}(X), f^{-1}, \to_f, \supset)$. Moreover, using Theorem \ref{InjSurjforAdj} and Theorem \ref{InjSurjforContinuous}, we can observe that if $(X, f)$ is full, i.e., $f$ is surjective, then the algebra $(\mathcal{O}(X), f^{-1}, \to_f)$ is full, as the surjectivity of $f$ implies the injectivity of $\nabla=f^{-1}$ and hence the surjectivity of $f_*$. The converse also holds if $X$ is $T_D$. In addition, if $(X, f)$ is faithful, i.e., $f$ is a topological embedding, then the $\nabla$-algebra $(\mathcal{O}(X), f^{-1}, \to_f)$ is faithful, as being an embedding implies the surjectivity of $\nabla=f^{-1}$. The converse also holds if $X$ is $T_0$.  
\end{example}

As shown in Example \ref{TopologicalSpace}, any dynamic topological system gives rise to a normal $\nabla$-algebra over a locale. The converse is not necessarily true, as the locale may not be the locale of the opens of a topological space. However, this is the only obstacle and moving to the point-free setting, these $\nabla$-algebras actually coincide with the \emph{point-free dynamic topological systems}, i.e., the pairs $(\mathscr{X}, F)$ where $\mathscr{X}$ is a locale and $F: \mathscr{X} \to \mathscr{X}$ is a locale map. The reason simply is the fact that in any normal $\nabla$-algebra $(\mathscr{X}, \nabla, \to)$ over the locale $\mathscr{X}$, the $\nabla$ is both join-preserving and finite meet-preserving and hence it is a locale map over $\mathscr{X}$ and conversely, as soon as $\nabla$ is a locale map over $\mathscr{X}$, its adjoint, i.e., $\to$ exists by the adjoint functor theorem, Theorem \ref{AdjointFunctorTheorem} and it is unique. Therefore, the normal $\nabla$-algebra $(\mathscr{X}, \nabla, \to)$ over the locale $\mathscr{X}$ is nothing but the point-free dynamic topological system $(\mathscr{X}, \nabla)$.
Having that observed, then $(Fa)$ is the point-free way to state that the dynamism $\nabla$ is a topological embedding, while $(Fu)$ is the point-free version of the surjectivity of $\nabla$. Notice that $(N)$, $(Fa)$ and $(Fu)$ together axiomatize the point-free situation where the dynamism is actually a homeomorphism.

\begin{remark}
Faithfulness is an interesting condition to consider even if the $\nabla$-algebra does not correspond to any dynamic topological system or its base lattice is not even a Heyting algebra. The reason is the pure implicational role that the faithfulness plays. To be more precise, a $\nabla$-algebra is faithful iff $c \to a \leq c \to b$ implies $c \wedge a \leq b$, for any $a, b, c \in A$, see Theorem \ref{Surjectivity}. This property is known to logicians for its philosophical and technical roles, see for instance \cite{Ard2,BasicPropLogic}.
\end{remark}

\begin{example}\label{KripkeFrames}\emph{(Kripke Frames)}
Let $(W, \leq)$ be a poset. By a \emph{Kripke frame}, we mean a tuple $\mathcal{K}=(W, \leq , R)$, where $R$ is a binary relation over $W$ compatible with the partial order, i.e., $\leq \circ \, R \, \circ \leq \, \subseteq R$ or more explicitly, if $k' \leq k$, $l \leq l'$ and $(k, l) \in R$ then $(k', l') \in R$, for any $k, k', l, l'\in W$:
\[\begin{tikzcd}
	k && l \\
	\\
	{k'} && {l'}
	\arrow["\leq", from=3-1, to=1-1]
	\arrow["R", from=1-1, to=1-3]
	\arrow["\leq", from=1-3, to=3-3]
	\arrow["R"', dashed, from=3-1, to=3-3]
\end{tikzcd}\]
To any Kripke frame, we can assign a canonical Heyting $\nabla$-algebra, encoding its structure algebraically. Set $\mathscr{X}$ as the locale of all upsets of $(W, \leq)$ and define $\nabla: \mathscr{X} \to \mathscr{X}$ as $\nabla_{\mathcal{K}} U=\{x \in W \mid \exists y \in U, (y, x) \in R \}$ and $U \to_{\mathcal{K}} V=\{x \in W \mid \forall y \in W [(x, y) \in R \wedge y \in U \Rightarrow y \in V]\}$. It is easy to see that $(\mathscr{X}, \nabla_{\mathcal{K}}, \to_{\mathcal{K}})$ is a Heyting $\nabla$-algebra and hence $(\mathscr{X}, \nabla_{\mathcal{K}}, \to_{\mathcal{K}}, \supset)$ is an explicitly Heyting $\nabla$-algebra, where $\supset$ is the usual Heyting implication, i.e., $U \supset V=\{x \in W \mid \forall y \in W [(x \leq y) \wedge y \in U \Rightarrow y \in V]\}$. It is also easy to see that the $\nabla$-algebra $(\mathscr{X}, \nabla_{\mathcal{K}}, \to_{\mathcal{K}})$ is right (resp. left) if $R$ is reflexive (resp. $R \subseteq \, \leq $).
\end{example}

One can read a Kripke frame as a very basic intuitionistic temporal setting, where the set $W$ is the state space and the relations $\leq$ and $R$ are capturing the growth of the knowledge and time over $W$, respectively. Notice that by this interpretation, the reflexivity condition on $R$ simply says that the order of time is reflexive while the condition $R \subseteq \, \leq$ means that the growth of time implies the growth of knowledge. Having such a temporal setting, the $\nabla$-algebra constructed in Example \ref{KripkeFrames} encodes the structure that the intuitionistic temporal setting imposes on the propositions. Apart from being a Heyting algebra, the structure is simply the pair of the \emph{existential past} modality $\nabla$ and the \emph{universal future} modality $\to$.

\begin{remark}
Left and right conditions are interesting even if the $\nabla$-algebra does not correspond to any Kripke frame or its base lattice is not even a Heyting algebra. The reason again is the pure implicational role that these properties play. To be more precise, the left $\nabla$-algebras are the ones where the introduction rule for the implication is admissible, i.e., $c \wedge a \leq b$ implies $c \leq a \to b$, see Theorem \ref{Left}, and the right $\nabla$-algebras are the ones where the modus ponens is admissible, i.e., $(a \to b) \wedge a \leq b$, see Theorem \ref{Right}. Such implications are investigated in the literature for some philosophical \cite{Ard2,BasicPropLogic,Okada} and technical reasons \cite{visser1981propositional}. 
\end{remark}

In the rest of this subsection, we will investigate some of the basic properties of $\nabla$-algebras. We will then use these properties to show that for any $C \subseteq \{D, N, R, L, Fa, Fu\}$, the classes $\mathcal{V}(C)$ and $\mathcal{V}_H(C)$ form a variety.

\begin{theorem}\label{BasicPropOfNablaAlg} Let $(\mathsf{A}, \nabla, \to)$ be a $\nabla$-algebra. Then:
\begin{description}
    \item[$(i)$]
The operations $\nabla:(A, \leq) \to (A, \leq)$ and $\to: (A, \leq)^{op} \times (A, \leq) \to (A, \leq)$ are order-preserving. Therefore, it is possible to define a $\nabla$-algebra as a structure $(\mathsf{A}, \nabla, \to)$, where $\mathsf{A}$ is a bounded lattice and for any $a \in A$, the operations $\nabla$ and $a \to (-)$ are two order-preserving maps satisfying the adjunction $\nabla (-) \wedge a \dashv a \to (-)$.
    \item[$(ii)$]
$\nabla \dashv \Box$. Therefore, in any $\nabla$-algebra, $\nabla$ (resp. $\Box$) preserves all the existing joins (resp. meets), especially, $\nabla 0=0$, $\nabla(a \vee b)=\nabla a \vee \nabla b$, $\Box 1=1$ and $\Box (a \wedge b)=\Box a \wedge \Box b$, for any $a, b \in A$.
    \item[$(iii)$]
$\nabla (a \to b) \wedge a \leq b$, $\nabla \Box a \leq a$ and $a \leq \Box \nabla a$, for any $a, b \in A$.     
\end{description}
\end{theorem}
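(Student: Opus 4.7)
The plan is to extract two adjunctions from the defining condition and then let standard adjunction machinery do the work; none of the three items requires any genuine computation with $\wedge$ or $\vee$.

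First I would prove (ii), because it isolates the auxiliary adjunction that drives everything else. Specialising the defining biconditional to $a=1$ gives $\nabla c \leq b$ iff $c \leq 1 \to b = \Box b$, which is exactly the adjunction $\nabla \dashv \Box$. Preservation of existing joins by $\nabla$ then follows from the usual ``upper bound of images'' argument: if $\bigvee S$ exists, then $\nabla(\bigvee S)$ is an upper bound of $\nabla(S)$ by monotonicity, and any other upper bound $t$ of $\nabla(S)$ satisfies $s \leq \Box t$ for every $s \in S$ by transposition, hence $\bigvee S \leq \Box t$ and then $\nabla(\bigvee S) \leq t$. Dually for $\Box$ and meets. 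The four named identities $\nabla 0=0$, $\nabla(a\vee b) = \nabla a \vee \nabla b$, $\Box 1 = 1$, $\Box(a \wedge b) = \Box a \wedge \Box b$ are the empty and binary cases. Note that this step does \emph{not} invoke Theorem~\ref{AdjointFunctorTheorem} and does not require completeness of $\mathsf{A}$; it uses only the transposition provided by the adjunction.

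For (i), monotonicity of $\nabla$ is now immediate from $\nabla \dashv \Box$ via Theorem~\ref{InjSurjforAdj} (or directly: $a \leq b \leq \Box \nabla b$ transposes to $\nabla a \leq \nabla b$). The mixed variance of $\to$ is handled by the parametrised adjunction itself. If $a \leq a'$, chaining $\nabla(a' \to b) \wedge a \leq \nabla(a' \to b) \wedge a' \leq b$ and transposing yields $a' \to b \leq a \to b$; if $b \leq b'$, then $\nabla(a \to b) \wedge a \leq b \leq b'$ transposes to $a \to b \leq a \to b'$. The alternative definition mentioned in the second sentence of (i) is then just a rereading: once $\nabla$ and $a \to (-)$ are known to be monotone, the defining biconditional is visibly the adjunction $\nabla(-) \wedge a \dashv a \to (-)$.

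Finally, (iii) records three units/counits. Taking $c = a \to b$ in the defining biconditional, the trivial $a \to b \leq a \to b$ transposes to $\nabla(a \to b) \wedge a \leq b$. Analogously, $\Box a \leq \Box a$ transposes across $\nabla \dashv \Box$ to $\nabla \Box a \leq a$, and $\nabla a \leq \nabla a$ transposes to $a \leq \Box \nabla a$. I do not foresee a genuine obstacle, since the theorem is essentially a soft unpacking of the adjunction hypothesis; the only point that merits care is making clear that the join/meet preservation in (ii) follows from the adjunction alone, so that one need not worry about the lack of completeness of $\mathsf{A}$ blocking an appeal to Theorem~\ref{AdjointFunctorTheorem}.
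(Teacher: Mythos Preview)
Your proof is correct and follows essentially the same approach as the paper, which also extracts the adjunction $\nabla \dashv \Box$ by specialising the defining biconditional to $a=1$ and then declares the remaining claims trivial. The only minor point to watch is that your join-preservation argument in (ii) invokes monotonicity of $\nabla$ before you formally establish it in (i); since monotonicity follows immediately from the biconditional (as you yourself note), this is a harmless reordering rather than a gap.
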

\begin{proof}
For $(i)$, if $c \leq d$, for any $b \in A$, we have
\[
\nabla d \leq b \;\;\; \text{implies} \;\;\; d \leq 1 \to b \;\;\; \text{implies} \;\;\; c \leq 1 \to b \;\;\; \text{implies} \;\;\; \nabla c \leq b.
\]
Setting $b=\nabla d$, we reach $\nabla c \leq \nabla d$. A similar argument works for $\to$. The rest are trivial.
\end{proof}

\begin{theorem} \label{Right} (Right) Let $\mathcal{A}=(\mathsf{A}, \nabla, \to)$ be a $\nabla$-algebra. Then, the following are equivalent:
\begin{description}
\item[$(i)$]
$b \leq \nabla b$, for any $b \in A$.
\item[$(ii)$]
$a \wedge (a \to b) \leq b$, for any $a, b \in A$.
\item[$(iii)$]
$\Box b \leq b$,  for any $b \in A$.
\end{description}
\end{theorem}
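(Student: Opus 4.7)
The plan is to prove the cyclic chain $(i) \Rightarrow (ii) \Rightarrow (iii) \Rightarrow (i)$, exploiting the adjunction $\nabla \dashv \Box$ and the three inequalities already recorded in Theorem \ref{BasicPropOfNablaAlg}, namely $\nabla(a \to b) \wedge a \leq b$, $\nabla \Box a \leq a$, and $a \leq \Box \nabla a$.

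For $(i) \Rightarrow (ii)$, I would start from the counit-like inequality $\nabla(a \to b) \wedge a \leq b$, which holds in every $\nabla$-algebra. Assuming $c \leq \nabla c$ for every $c$, apply this to $c = a \to b$ to obtain $a \to b \leq \nabla(a \to b)$, and then monotonicity of meet gives $a \wedge (a \to b) \leq a \wedge \nabla(a \to b) \leq b$.

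For $(ii) \Rightarrow (iii)$, simply specialize $a := 1$ in $(ii)$: since $\Box b = 1 \to b$, we get $\Box b = 1 \wedge (1 \to b) \leq b$.

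For $(iii) \Rightarrow (i)$, recall that $b \leq \Box \nabla b$ holds unconditionally by Theorem \ref{BasicPropOfNablaAlg}$(iii)$. Instantiating the hypothesis $\Box c \leq c$ at $c = \nabla b$ yields $\Box \nabla b \leq \nabla b$, so transitivity of $\leq$ delivers $b \leq \nabla b$. There is no real obstacle here: each implication is a one-line consequence of the adjunction together with the universal inequalities collected in the preceding theorem, so the work reduces to picking the right specialization in each step.
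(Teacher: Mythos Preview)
Your proof is correct and follows essentially the same cyclic argument as the paper: $(i)\Rightarrow(ii)$ via $a\to b \leq \nabla(a\to b)$ combined with $\nabla(a\to b)\wedge a \leq b$; $(ii)\Rightarrow(iii)$ by specializing $a=1$; and $(iii)\Rightarrow(i)$ via $b \leq \Box\nabla b \leq \nabla b$. The only difference is cosmetic---the paper compresses each step into a single inequality chain.
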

\begin{proof}
To prove $(ii)$ from $(i)$, apply Theorem \ref{BasicPropOfNablaAlg} to have
$a \wedge (a \to b) \leq a \wedge \nabla (a \to b) \leq b$. To prove $(iii)$ from $(ii)$, put $a=1$ in $(ii)$. To prove $(i)$ from $(iii)$, by Theorem \ref{BasicPropOfNablaAlg}, we have $b \leq \Box \nabla b \leq \nabla b$.
\end{proof}

\begin{theorem} \label{Left} (Left) Let $\mathcal{A}=(\mathsf{A}, \nabla, \to)$ be a $\nabla$-algebra. Then, the following are equivalent:
\begin{description}
\item[$(i)$]
$\nabla b \leq b$, for any $b \in A$.
\item[$(ii)$]
$c \wedge a \leq b$ implies $c \leq a \to b$, for any $a, b, c \in A$.
\item[$(iii)$]
$b \leq \Box b$, for any $b \in A$.
\end{description}
\end{theorem}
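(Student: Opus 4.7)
The plan is to close the cycle $(i) \Rightarrow (ii) \Rightarrow (iii) \Rightarrow (i)$, closely mirroring the argument given for Theorem \ref{Right} but using the adjunction $\nabla \dashv \Box$ in the ``opposite" direction.

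For $(i) \Rightarrow (ii)$, I would invoke the monotonicity of $\nabla$ together with the defining adjunction $\nabla(-) \wedge a \dashv a \to (-)$. Assuming $\nabla b \leq b$ holds for all $b$ and that $c \wedge a \leq b$, the inequality $\nabla c \leq c$ (an instance of $(i)$) combined with the monotonicity of meet gives $\nabla c \wedge a \leq c \wedge a \leq b$, and the adjunction then yields $c \leq a \to b$. For $(ii) \Rightarrow (iii)$, the natural move is to specialize: take $a = 1$ and $c = b$, so the hypothesis $c \wedge a = b \leq b$ is automatic and the conclusion reads $b \leq 1 \to b = \Box b$.

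For $(iii) \Rightarrow (i)$, I would appeal directly to Theorem \ref{BasicPropOfNablaAlg}$(ii)$, which gives $\nabla \dashv \Box$. The adjoint correspondence says $\nabla b \leq b$ iff $b \leq \Box b$, so $(iii)$ immediately delivers $(i)$. Alternatively, one can argue explicitly: applying $\nabla$ to $b \leq \Box b$ and using the counit $\nabla \Box a \leq a$ from Theorem \ref{BasicPropOfNablaAlg}$(iii)$ gives $\nabla b \leq \nabla \Box b \leq b$.

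There is no genuine obstacle here; the whole equivalence reduces to the adjunction $\nabla \dashv \Box$ combined with monotonicity and a single specialization $a = 1$. The only point worth care is making sure in $(i) \Rightarrow (ii)$ that one uses $\nabla c \leq c$ before applying the adjunction, rather than attempting to relate $c \wedge a \leq b$ to $c \leq a \to b$ by a direct instance of the adjunction (which would require $\nabla c \wedge a = c \wedge a$, not just an inequality).
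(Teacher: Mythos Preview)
Your proposal is correct and follows essentially the same route as the paper: the paper proves $(i)\Rightarrow(ii)$ by observing $\nabla c\wedge a\leq c\wedge a\leq b$ and applying the adjunction, obtains $(iii)$ from $(ii)$ by setting $a=1$, $c=b$, and derives $(i)$ from $(iii)$ via the adjunction $\nabla\dashv\Box$ of Theorem~\ref{BasicPropOfNablaAlg}. Your additional alternative for $(iii)\Rightarrow(i)$ via the counit is also fine but not needed.
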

\begin{proof}
To prove $(ii)$ from $(i)$, if $c \wedge a \leq b$, then $\nabla c \wedge a \leq b$ and hence $c \leq a \to b$, by adjunction. To prove $(iii)$ from $(ii)$, put $a=1$ and $c=b$ in $(ii)$. To prove $(i)$ from $(iii)$, use the adjunction $\nabla \dashv \Box$ in Theorem \ref{BasicPropOfNablaAlg}.
\end{proof}

\begin{theorem}\label{Surjectivity} (Faithfulness)
Let $\mathcal{A}=(\mathsf{A}, \nabla, \to)$ be a $\nabla$-algebra. Then, the following are equivalent:
\begin{description}
\item[$(i)$]
$\nabla \Box a=a$, for any $a \in A$.
\item[$(ii)$]
$\nabla$ is surjective.
\item[$(iii)$]
$a \wedge \nabla(a \to b)=a \wedge b$, for any $a, b \in A$.
\item[$(iv)$]
$c \to a \leq c \to b$ implies $c \wedge a \leq b$, for any $a, b, c \in A$.
\item[$(v)$]
$\Box$ is an order embedding, i.e., if $\Box a \leq \Box b$ then $a \leq b$, for any $a, b \in A$.
\end{description}
\end{theorem}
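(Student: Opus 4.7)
The plan is to prove the cyclic implications $(i) \Rightarrow (iii) \Rightarrow (iv) \Rightarrow (v) \Rightarrow (ii) \Rightarrow (i)$, leaning heavily on the adjunction $\nabla \dashv \Box$ together with Theorems \ref{InjSurjforAdj} and \ref{BasicPropOfNablaAlg}.

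First, observe that $(i) \Leftrightarrow (ii)$ is already packaged in Theorem \ref{InjSurjforAdj}: for the adjunction $\nabla \dashv \Box$, the surjectivity of $\nabla$ is equivalent to $\nabla \Box a = a$ for every $a$ (and also to $\Box$ being one-to-one). So the real work is connecting $(iii)$, $(iv)$, $(v)$ into this loop. For $(i) \Rightarrow (iii)$, the easy inequality $a \wedge \nabla(a \to b) \leq a \wedge b$ is immediate from $\nabla(a \to b) \wedge a \leq b$ in Theorem \ref{BasicPropOfNablaAlg}. For the reverse direction, I would first show $\Box b \leq a \to b$ by the adjunction $\nabla(-) \wedge a \dashv a \to (-)$: since $\nabla \Box b \wedge a \leq b \wedge a \leq b$, transposing gives $\Box b \leq a \to b$. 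Apply $\nabla$, which is order-preserving, to obtain $\nabla \Box b \leq \nabla(a \to b)$, and then use $(i)$ in the form $b = \nabla \Box b$ to conclude $b \leq \nabla(a \to b)$, hence $a \wedge b \leq a \wedge \nabla(a \to b)$.

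For $(iii) \Rightarrow (iv)$, I would simply apply $\nabla$ to the hypothesis $c \to a \leq c \to b$, meet both sides with $c$, and invoke $(iii)$ on each side to rewrite the result as $c \wedge a \leq c \wedge b \leq b$. For $(iv) \Rightarrow (v)$, the specialization $c = 1$ is the whole story: $c \to a$ becomes $\Box a$ and $c \wedge a$ becomes $a$, so $\Box a \leq \Box b$ yields $a \leq b$. For $(v) \Rightarrow (ii)$, note that order embedding trivially entails injectivity (set $a = b$ twice), and by Theorem \ref{InjSurjforAdj}, $\Box$ injective is equivalent to $\nabla$ surjective. Closing the loop with $(ii) \Rightarrow (i)$ is, again, the content of Theorem \ref{InjSurjforAdj}.

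There is no real obstacle here, only one subtle point worth flagging: the step $(iii) \Rightarrow (iv)$ uses monotonicity of $\nabla$ (already recorded in Theorem \ref{BasicPropOfNablaAlg}(i)), and the step $(i) \Rightarrow (iii)$ requires spotting that $\Box b \leq a \to b$ is free from the adjunction — the rest is bookkeeping. If one preferred a symmetric presentation, one could instead establish $(i) \Leftrightarrow (ii) \Leftrightarrow (v)$ first via Theorem \ref{InjSurjforAdj} (using that $\Box$ preserves meets, whence injectivity and order-embedding coincide), and then independently prove $(i) \Leftrightarrow (iii)$ and $(iii) \Leftrightarrow (iv)$; but the cyclic route above is shorter.
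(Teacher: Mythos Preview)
Your proof is correct and follows essentially the same cycle as the paper's own argument. The only cosmetic difference is that the paper proves $(ii) \Rightarrow (iii)$ using an arbitrary preimage $c$ with $\nabla c = b$, whereas you go $(i) \Rightarrow (iii)$ using the explicit preimage $\Box b$; the remaining steps $(iii) \Rightarrow (iv) \Rightarrow (v)$ and the appeal to Theorem~\ref{InjSurjforAdj} are identical.
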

\begin{proof}
It is clear that $(i)$ implies $(ii)$. To prove $(iii)$ from $(ii)$, by Theorem \ref{BasicPropOfNablaAlg}, we have $a \wedge \nabla (a \to b) \leq b$. Therefore, it is enough to show that $b \leq \nabla(a \to b)$. Since $\nabla$ is surjective, there exists $c \in A$ such that $b=\nabla c$. Since $b \wedge a \leq b$, we have $\nabla c \wedge a \leq b$ which implies $c \leq a \rightarrow b$ and hence, $b=\nabla c \leq \nabla (a \rightarrow b)$, by the monotonicity of $\nabla$ proved in Theorem \ref{BasicPropOfNablaAlg}. To prove $(iv)$ from $(iii)$, assume $c \to a \leq c \to b$. Hence, $c \wedge \nabla (c \to a) \leq c \wedge \nabla (c \to b)$, again by the the monotonicity of $\nabla$. Therefore, using $(iii)$, we have $c \wedge a \leq b$. To reach $(v)$ from $(iv)$, it is just enough to set $c=1$ in $(iv)$. And finally, to prove $(i)$ from $(v)$, use Theorem \ref{InjSurjforAdj}. 
\end{proof}

\begin{remark} \label{OnOne}
Note that $\nabla 1=1$, in any faithful $\nabla$-algebra. Because, $1= \nabla \Box1=\nabla (1 \to 1)=\nabla 1$. As a consequence, $a \to b=1$ iff $a \leq b$, because $1 \leq a \to b$ iff $\nabla 1 \wedge a \leq b$ iff $a \leq b$.
\end{remark}

\begin{corollary}\label{FaithfulImpliesHeyting}
Any faithful $\nabla$-algebra has a Heyting structure. Consequently, any faithful $\nabla$-algebra is distributive.
\end{corollary}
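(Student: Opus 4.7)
The plan is to exhibit an explicit formula for the Heyting implication $\supset$ in terms of the existing operations of a faithful $\nabla$-algebra and verify it satisfies the defining adjunction $(-)\wedge b \dashv b \supset (-)$. The natural candidate, motivated by the fact that faithfulness lets us replace any element $a$ by $\nabla \Box a$, is
\[
b \supset c := \nabla(b \to c).
\]
The claim to check is that $a \leq b \supset c$ iff $a \wedge b \leq c$, for all $a, b, c \in A$.

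For the direction $a \wedge b \leq c \Rightarrow a \leq b \supset c$, I would use faithfulness in the form $\nabla \Box a = a$ (Theorem \ref{Surjectivity}(i)). Rewriting $a \wedge b \leq c$ as $\nabla \Box a \wedge b \leq c$ and applying the defining adjunction of the $\nabla$-algebra yields $\Box a \leq b \to c$. Monotonicity of $\nabla$ (Theorem \ref{BasicPropOfNablaAlg}(i)) then gives $a = \nabla \Box a \leq \nabla(b \to c) = b \supset c$.

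For the converse, assume $a \leq \nabla(b \to c)$. Meeting with $b$ yields $a \wedge b \leq \nabla(b \to c) \wedge b$, and the right-hand side is bounded above by $c$ thanks to the standard inequality $\nabla(x \to y) \wedge x \leq y$ recorded in Theorem \ref{BasicPropOfNablaAlg}(iii) (applied with $x = b, y = c$). Hence $a \wedge b \leq c$, which closes the adjunction.

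Once this adjunction is established, $\supset$ is by definition a Heyting implication on $\mathsf{A}$, so $\mathsf{A}$ carries a Heyting structure. Distributivity of any Heyting algebra is standard (in fact it follows formally: $(-)\wedge b$ is a left adjoint and so preserves all existing joins, giving $a \wedge (b_1 \vee b_2) = (a \wedge b_1) \vee (a \wedge b_2)$). I do not foresee a real obstacle; the only nonroutine step is spotting the right definition of $\supset$, which the identity $\nabla \Box = \mathrm{id}$ makes essentially forced.
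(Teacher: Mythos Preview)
Your proof is correct and essentially identical to the paper's own argument: the paper also defines $a \supset b = \nabla(a \to b)$, uses $\nabla\Box c = c$ together with the defining adjunction and monotonicity of $\nabla$ for one direction, and the inequality $\nabla(a \to b) \wedge a \leq b$ for the other. The only difference is notational (variable names) and your added remark justifying distributivity, which the paper leaves implicit.
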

\begin{proof}
Define $a \supset b=\nabla (a \to b)$, for any $a, b \in A$. We claim that $\supset$ is the Heyting implication. To prove, if $c \wedge a \leq b$, then by part $(i)$ of Theorem \ref{Surjectivity}, we have $\nabla \Box c=c$. Hence, $\nabla \Box c \wedge a \leq b$ which implies $\Box c \leq a \to b$ and then $c=\nabla \Box c \leq \nabla (a \to b)$, by the monotonicity of $\nabla$, Theorem \ref{BasicPropOfNablaAlg}. Conversely, if $c \leq \nabla (a \to b)$, then $c \wedge a \leq \nabla (a \to b) \wedge a \leq b$, by Theorem \ref{BasicPropOfNablaAlg}.
\end{proof}

\begin{theorem}\label{Injectivity} (Fullness)
Let $\mathcal{A}=(\mathsf{A}, \nabla, \to)$ be a $\nabla$-algebra. Then, the following are equivalent:
\begin{description}
\item[$(i)$]
$\Box \nabla a=a$, for any $a \in A$.
\item[$(ii)$]
$\Box$ is surjective.
\item[$(iii)$]
$\nabla$ is an embedding, i.e., if $\nabla a \leq \nabla b$ then $a \leq b$, for any $a, b \in A$.
\end{description}
\end{theorem}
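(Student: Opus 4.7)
The statement is the dual of Theorem \ref{Surjectivity}, and the natural proof is to run everything through the adjunction $\nabla \dashv \Box$ established in Theorem \ref{BasicPropOfNablaAlg}$(ii)$ together with the general facts about adjoint pairs collected in Theorem \ref{InjSurjforAdj}.

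The plan is to prove the cycle $(i) \Rightarrow (ii) \Rightarrow (iii) \Rightarrow (i)$. For $(i) \Rightarrow (ii)$, if $\Box\nabla a = a$ holds for every $a \in A$, then every element of $A$ lies in the image of $\Box$, so $\Box$ is surjective. For $(ii) \Rightarrow (iii)$, I would first invoke Theorem \ref{InjSurjforAdj} applied to the adjunction $\nabla \dashv \Box$: the surjectivity of $\Box$ (the right adjoint) is equivalent to $\Box\nabla a = a$ for all $a \in A$. Given this identity and the fact that $\Box$ is order-preserving (Theorem \ref{BasicPropOfNablaAlg}$(i)$ applied to the implication, since $\Box a = 1 \to a$), from $\nabla a \leq \nabla b$ I get $a = \Box\nabla a \leq \Box\nabla b = b$, which is precisely $(iii)$.

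Finally, for $(iii) \Rightarrow (i)$, I start with the counit inequality from Theorem \ref{BasicPropOfNablaAlg}$(iii)$, namely $\nabla\Box c \leq c$, applied at $c = \nabla a$ to obtain $\nabla\Box\nabla a \leq \nabla a$. Since $\nabla$ is an order-embedding by assumption, this forces $\Box\nabla a \leq a$. The reverse inequality $a \leq \Box\nabla a$ is the unit inequality also furnished by Theorem \ref{BasicPropOfNablaAlg}$(iii)$, so we conclude $\Box\nabla a = a$.

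I do not anticipate any genuine obstacle here: the entire argument is a routine manipulation of the unit/counit of the adjunction $\nabla \dashv \Box$. The only place where one has to think for a moment is the step $(ii) \Rightarrow (iii)$, where surjectivity of $\Box$ in the sense of a function between sets has to be upgraded to the order-theoretic statement that $\nabla$ reflects the order; this is handled cleanly by first extracting the identity $\Box\nabla a = a$ from Theorem \ref{InjSurjforAdj} and then exploiting monotonicity of $\Box$.
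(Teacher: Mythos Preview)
Your proposal is correct and is exactly the approach the paper takes: the paper's proof is the single line ``See Theorem \ref{InjSurjforAdj},'' and your argument simply unfolds that reference, using the adjunction $\nabla \dashv \Box$ from Theorem \ref{BasicPropOfNablaAlg}$(ii)$ together with the unit/counit inequalities to obtain the cycle of implications.
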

\begin{proof}
See Theorem \ref{InjSurjforAdj}.
\end{proof}

\begin{theorem}\label{Variety}
The following set of equalities and inequalities axiomatize the class of all $\nabla$-algebras:
\begin{description}
\item[$(1)$]
The set of equalities axiomatizing the variety of bounded lattices,
\item[$(2)$]
$(a \wedge b) \rightarrow a=1$.
\item[$(3)$]
$\nabla (a \wedge b) \leq \nabla a \wedge \nabla b$.
\item[$(4)$]
$a \wedge \nabla (a \rightarrow b) \leq b$.
\item[$(5)$]
$c \wedge [(\nabla c \wedge a) \to b] \leq a \rightarrow b$.
\end{description}
Therefore, the class $\mathcal{V}$ is a variety. More generally, for any subset $C \subseteq \{D, N, R, L, Fa, Fu\}$, the classes $\mathcal{V}(C)$ and $\mathcal{V}_H(C)$ are varieties.
\end{theorem}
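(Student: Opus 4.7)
The plan is to prove the axiomatization in two directions, then check that each of the six structural conditions is expressible by an equation so that the varietal conclusion follows by Birkhoff's theorem.

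For the forward direction, suppose $(\mathsf{A},\nabla,\to)$ is a $\nabla$-algebra. Then (1) is immediate, (3) is just the monotonicity of $\nabla$ from Theorem \ref{BasicPropOfNablaAlg}$(i)$, and (4) is exactly the first clause of Theorem \ref{BasicPropOfNablaAlg}$(iii)$. For (2), note that $\nabla 1 \wedge (a\wedge b) \leq a\wedge b \leq a$, so by the adjunction $1\leq (a\wedge b)\to a$. For (5), set $d=(\nabla c\wedge a)\to b$. The adjunction gives $\nabla d \wedge \nabla c \wedge a \leq b$, and monotonicity of $\nabla$ yields $\nabla(c\wedge d)\leq \nabla c\wedge \nabla d$, so $\nabla(c\wedge d)\wedge a\leq b$, and a second use of the adjunction gives $c\wedge d\leq a\to b$.

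The converse direction is the substantive step: given a bounded lattice with $\nabla$ and $\to$ satisfying (1)-(5), we must recover the adjunction $\nabla c\wedge a\leq b \iff c\leq a\to b$. Two preliminary facts do all the work. First, (3) alone forces the monotonicity of $\nabla$: if $c\leq d$ then $c=c\wedge d$ and $\nabla c=\nabla(c\wedge d)\leq \nabla d$. Second, (2) yields the implication-reflexivity principle ``$x\leq y\Rightarrow x\to y=1$'', by taking $a=y,b=x$ in (2), since $x\leq y$ gives $y\wedge x=x$. With these in hand, the adjunction is immediate. For the forward direction, $c\leq a\to b$ implies $\nabla c\leq \nabla(a\to b)$ by monotonicity, and combining with (4) gives $\nabla c\wedge a\leq b$. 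For the converse, if $\nabla c\wedge a\leq b$ then the reflexivity principle gives $(\nabla c\wedge a)\to b=1$, and instantiating (5) yields $c=c\wedge 1\leq a\to b$.

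For the varietal claim, each of the six extra conditions must be shown equational. $(D)$ is the standard distributive identity, $(N)$ is the pair of equations $\nabla 1=1$ and $\nabla(a\wedge b)=\nabla a\wedge \nabla b$, while $(R)$ and $(L)$ are $a=a\wedge \nabla a$ and $\nabla a=\nabla a\wedge a$ respectively. For $(Fa)$ and $(Fu)$ we invoke Theorem \ref{Surjectivity} and Theorem \ref{Injectivity}, which translate the surjectivity of $\nabla$ and $\Box$ into the equations $\nabla\Box a=a$ and $\Box\nabla a=a$ (with $\Box a$ expanded as $1\to a$). Hence each $C\subseteq\{D,N,R,L,Fa,Fu\}$ axiomatises a subvariety of the variety of all $\nabla$-algebras. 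For $\mathcal{V}_H(C)$, the Heyting implication $\supset$ is part of the signature and its defining residuation is equational, so the same reasoning applies.

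I do not anticipate a serious obstacle; the only point that requires a moment's attention is recognising that axioms (2) and (3) are being used precisely to recover the two monotonicity-type principles that make (4) and (5) into the two halves of the adjunction. Once that is in place, the rest is bookkeeping.
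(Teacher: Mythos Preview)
Your proposal is correct and follows essentially the same approach as the paper: both directions of the axiomatization are handled identically (extracting monotonicity of $\nabla$ from (3), the reflexivity principle $x\leq y\Rightarrow x\to y=1$ from (2), and then using (4) and (5) for the two halves of the adjunction), and the varietal conclusion is obtained by the same appeal to Theorems \ref{Surjectivity} and \ref{Injectivity} for $(Fa)$ and $(Fu)$. The only cosmetic difference is that you spell out the soundness of (2) and (5) a bit more explicitly than the paper does.
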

\begin{proof}
To prove the soundness of the axiomatization, we have to show that all the axioms in $(1)-(5)$ are satisfied in any $\nabla$-algebra. The axioms in $(1)$ hold as any $\nabla$-algebra is a bounded lattice and $(2)-(4)$ are trivially satisfied using the adjunction and the monotonicity of $\nabla$, see Theorem \ref{BasicPropOfNablaAlg}. For $(5)$, by the adjunction, it is enough to show that $\nabla [c \wedge [(\nabla c \wedge a) \to b]] \wedge a \leq b$. By the monotonicity of $\nabla$, we have 
$
\nabla [c \wedge [(\nabla c \wedge a) \to b]] \leq \nabla c \wedge \nabla [(\nabla c \wedge a) \to b]
$
and since 
\[
\nabla c \wedge \nabla [(\nabla c \wedge a) \to b] \wedge a= \nabla [(\nabla c \wedge a) \to b] \wedge (\nabla c \wedge a) \leq b,
\]
we are done.
For the completeness, assume that $(\mathsf{A}, \nabla, \to)$ satisfies $(1)-(5)$. First, notice that by the axioms in $(1)$, $\mathsf{A}$ is a bounded lattice. Secondly, note that if $x \leq y$ then $x \to y=1$, for any $x, y \in A$, as $x \leq y$ implies $x=x \wedge y$ which by the axiom $(2)$ implies $x \to y=(x \wedge y) \to y=1$. Thirdly, notice that $\nabla$ is order-preserving, as if $x \leq y$ then $x \wedge y=x$. Hence, by axiom $(3)$, we have $\nabla x=\nabla (x \wedge y) \leq \nabla x \wedge \nabla y$, which implies $\nabla x \leq \nabla y$. Now, to prove the adjunction condition of a $\nabla$-algebra, if $\nabla c \wedge a \leq b$ then $(\nabla c \wedge a) \to b=1$ and hence, by the axiom $(5)$, we have $c \leq a \rightarrow b$. Conversely, if $c \leq a \rightarrow b$, then since $\nabla$ is order-preserving, $\nabla c \leq \nabla (a \rightarrow b)$. Therefore, $\nabla c \wedge a \leq \nabla (a \to b) \wedge a$. Finally, by the axiom $(4)$, we reach $\nabla c \wedge a \leq b$. This completes the first part of the claim. For the second part, it is enough to note that in the presence of the lattice axioms, the inequalities $(2)-(5)$ can be rewritten as some equalities. Hence, $\mathcal{V}$ is a variety.

For the third part, it is enough to axiomatize any property in the set $ \{D, N, R, L, Fa, Fu\}$ by an equality. The only non-trivial cases are for $(Fa)$ and $(Fu)$ for which one can use Theorem \ref{Surjectivity} and Theorem \ref{Injectivity}. Hence, for any $C \subseteq \{D, N, R, L, Fa, Fu\}$, the class $\mathcal{V}(C)$  is a variety. The case for $\mathcal{V}_H(C)$ is also easy, using the fact that being a Heyting algebra is definable by equalities using a new symbol for the Heyting implication. 
\end{proof}

\begin{remark}
Note that Theorem \ref{Variety} excludes the classes $\mathcal{V}(C, H)$, for any $C \subseteq \{D, N, R, L, Fa, Fu\}$. The reason is the lack of the Heyting implication in the signature of the Heyting $\nabla$-algebras. This operation is usually required in the axiomatizations that only use the equalities. For instance, one can show that $\mathcal{V}(H)$ is not a variety. The reason simply is the existence of $\nabla$-algebras $\mathcal{A}$ and $\mathcal{B}$ and a $\nabla$-algebra embedding $i: \mathcal{A} \to \mathcal{B}$ such that $\mathcal{B}$ is Heyting while $\mathcal{A}$ is not. To have a concrete example, set $\mathcal{B}=(P(\mathbb{N}), \nabla, \to)$, where $P(\mathbb{N})$ is a boolean algebra of all subsets of $\mathbb{N}$, $\nabla U = \varnothing$ and $U \to V=\mathbb{N}$, for any $U , V \in P(\mathbb{N})$, $\mathcal{A}=(F(\mathbb{N}) \cup \{\mathbb{N}\}, \nabla, \to)$, where $F(\mathbb{N})$ is the set of all finite subsets of $\mathbb{N}$ and $\nabla$ and $\to$ are as in $\mathcal{B}$ and $i: \mathcal{A} \to \mathcal{B}$ is simply the inclusion function. Having said that, it is also worth mentioning that there are some cases where the conditions in $C$ imply the existence of the Heyting structure and hence $\mathcal{V}(C, H)=\mathcal{V}(C)$ is a variety. For instance, consider the cases where $Fa \in C$ and note that any faithful $\nabla$-algebra is Heyting by Corollary \ref{FaithfulImpliesHeyting}.
\end{remark}
\subsection{Abstract Implications}
In this subsection, we will recall the abstract implications and their representations by the $\nabla$-algebras. 
\begin{definition}
Let $\mathsf{A}=(A, \wedge, 1)$ be a bounded meet semi-lattice and $\leq$ be its meet-semilattice order. A binary order-preserving operation $\to$ from $(A, \leq)^{op} \times (A, \leq)$ to $(A, \leq)$ is called an \emph{implication}, if for any $a, b, c \in A$:
\begin{itemize}
\item[$\bullet$]
\emph{(internal reflexivity)} $a \to a=1$,
\item[$\bullet$]
\emph{(internal transitivity)} $(a \to b) \wedge (b \to c) \leq a \to c$.
\end{itemize}
The pair $\mathcal{A}=(\mathsf{A}, \to)$ is called a \emph{strong algebra} (short for strong enough to internalize its order). An implication is called \emph{meet-internalizing} if $a \to (b \wedge c)=(a \to b) \wedge (a \to c)$ and \emph{join-internalizing} (when $\mathsf{A}$ is also a bounded lattice) if $(a \vee b) \to c=(a \to c) \wedge (b \to c)$, for any $a, b, c \in A$. 
In any strong algebra, $\Box a$ is an abbreviation for $1 \to a$. If $\mathcal{A}=(\mathsf{A}, \to_{\mathcal{A}})$ and $\mathcal{B}=(\mathsf{B}, \to_{\mathcal{B}})$ are two strong algebras, by a \emph{strong algebra morphism}, we mean a map $f: A \to B$, preserving the order, all the finite meets and the implication. It is called an \emph{embedding} if it is also an order embedding.
\end{definition}

The following theorem ensures that any $\nabla$-algebra provides an implication while such well-behaved implications are general enough to represent all abstract implications. A weaker version of the second part first appeared in the unpublished draft of the present paper and then generalized in \cite{ImSpace} to its current form.

\begin{theorem}\label{NablaImpliesStrong}
\begin{description}
\item[$(i)$]
Let $(\mathsf{A}, \nabla, \to)$ be a $\nabla$-algebra. Then, $ \to$ is a meet-internalizing implication and if $\mathsf{A}$ is distributive, it is join-internalizing, as well.
\item[$(ii)$]
Let $(\mathsf{A}, \to_{\mathcal{A}})$ be a strong algebra. Then, there exists a locale $\mathscr{X}$, a $\nabla$-algebra $(\mathscr{X}, \nabla, \to_{\mathscr{X}})$, an order-preserving map $F: \mathscr{X} \to \mathscr{X}$ and a bounded meet semi-lattice embedding $i: \mathsf{A} \to \mathscr{X}$ such that 
\[
i(a \to_{\mathcal{A}} b)=F(i(a)) \to_{\mathscr{X}} F(i(b)),
\]
for any $a, b \in A$. Moreover, if $\to_{\mathcal{A}}$ is meet-internalizing, then $F$ can be set as the identity function and hence $i$ becomes a strong algebra morphism. If $\mathsf{A}$ is also distributive and $\to_{\mathcal{A}}$ is join-internalizing, then $i$ can be set as a bounded lattice morphism. 
\end{description}
\end{theorem}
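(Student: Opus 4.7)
For part $(i)$, I would verify the required axioms of a strong algebra (and its extra properties) one by one using the defining adjunction and Theorem \ref{BasicPropOfNablaAlg}. Order-preservation of $\to$ is already recorded in Theorem \ref{BasicPropOfNablaAlg}(i). Internal reflexivity $a\to a=1$ is the adjoint form of the trivial $\nabla 1\wedge a\leq a$. Internal transitivity $(a\to b)\wedge(b\to c)\leq a\to c$ reduces, via adjunction, to $\nabla[(a\to b)\wedge(b\to c)]\wedge a\leq c$; estimating $\nabla$ on the meet by monotonicity and then applying $\nabla(x\to y)\wedge x\leq y$ from Theorem \ref{BasicPropOfNablaAlg}(iii) to the $(a,b)$-pair and the $(b,c)$-pair in sequence gives the inequality. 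Meet-internalization $a\to(b\wedge c)=(a\to b)\wedge(a\to c)$ is immediate because $a\to(-)$ is a right adjoint and therefore preserves all existing meets. For join-internalization under distributivity, the inequality $(a\vee b)\to c\leq(a\to c)\wedge(b\to c)$ is by antitonicity of $\to$ in the first argument, while the reverse, via adjunction, reduces to $\nabla[(a\to c)\wedge(b\to c)]\wedge(a\vee b)\leq c$; distributivity splits the left side into two summands, each of which collapses by the same monotonicity-plus-Theorem \ref{BasicPropOfNablaAlg}(iii) argument.

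For part $(ii)$, the plan is to place the strong algebra inside a locale of downsets. Take $\mathscr{X}=D(\mathsf{A},\leq)$ and $i(a)=\downarrow a$; the map $i$ is an injective bounded meet semi-lattice morphism because $\downarrow 1=A$, $\downarrow(a\wedge b)=\downarrow a\cap\downarrow b$, and $\leq$ is antisymmetric. To equip $\mathscr{X}$ with a $\nabla$-algebra structure together with the correcting factor, I introduce a join-preserving self-map $F:\mathscr{X}\to\mathscr{X}$ built from $\to_{\mathcal{A}}$, with a natural candidate being $F(U)=\bigcup_{u\in U}\{x\in A:x\to_{\mathcal{A}}u=1\}$, and then set $\nabla=F$, letting $\to_{\mathscr{X}}$ be the right adjoint of $F(-)\cap V$, which exists by Theorem \ref{AdjointFunctorTheorem} since $F$ preserves arbitrary unions and $D(\mathsf{A})$ is a locale. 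The identity $i(a\to_{\mathcal{A}}b)=F(i(a))\to_{\mathscr{X}}F(i(b))$ is then verified by testing on principal downsets $\downarrow c$ and rephrasing it as an elementary biconditional in $c,a,b$ provable from internal reflexivity and internal transitivity of $\to_{\mathcal{A}}$.

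The two more specialized claims should come out by tuning the base locale. In the meet-internalizing case, $a\to_{\mathcal{A}}(-)$ already preserves finite meets, which should let me collapse the correcting factor to the identity and read off $i$ as a strong algebra morphism directly. In the distributive plus join-internalizing case, I would replace $D(\mathsf{A})$ by a locale on which $i$ is a full bounded lattice morphism; a canonical choice available by Theorem \ref{PrimeFilterTheorem} and distributivity is the locale of upsets of the poset of prime filters of $\mathsf{A}$, after which the modal data $F$, $\nabla$, $\to_{\mathscr{X}}$ are ported across.

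The main obstacle I anticipate is not writing down the construction but proving the key identity for an arbitrary strong algebra. Modus ponens $(a\to_{\mathcal{A}}b)\wedge a\leq b$, which a Heyting-style argument would silently use, is unavailable here; it is precisely the role of $F$ to absorb this deficit, and it is not obvious that the naive candidate succeeds on the nose. Pinning down the correct $F$ (and adjusting $\nabla$ and $\to_{\mathscr{X}}$ accordingly) so that the biconditional is provable using only internal reflexivity, internal transitivity, and order-monotonicity of $\to_{\mathcal{A}}$, without smuggling in hidden Heyting structure on $\mathsf{A}$, is where the real content of part $(ii)$ sits.
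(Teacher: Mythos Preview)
Your argument for part $(i)$ is correct and essentially matches the paper's proof: the paper likewise derives internal reflexivity from the adjunction, proves internal transitivity by unwinding the adjunction (it phrases this via an arbitrary $d\leq(a\to b)\wedge(b\to c)$ rather than directly estimating $\nabla$ of the meet, but the content is identical), obtains meet-internalization from the right-adjoint status of $a\to(-)$, and handles join-internalization by an adjunction-plus-distributivity calculation equivalent to yours.

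For part $(ii)$ there is nothing to compare: the paper does not prove this part at all but refers to Theorems~8 and~17 of \cite{ImSpace}. Your downset-locale plan with $\mathscr{X}=D(\mathsf{A},\leq)$ and $i(a)=\downarrow a$ is the natural one, and your honest identification of the obstacle is on point. In fact the specific $F$ you propose does \emph{not} verify the key identity as stated: with $\nabla=F$ one computes $F(\downarrow a)=\{x:x\to_{\mathcal{A}}a=1\}$, so the required equivalence becomes
\[
c\leq a\to_{\mathcal{A}}b \quad\Longleftrightarrow\quad \bigl(\forall x\,[\,x\to_{\mathcal{A}}c=1\ \&\ x\to_{\mathcal{A}}a=1\ \Rightarrow\ x\to_{\mathcal{A}}b=1\,]\bigr),
\]
and the forward direction already fails in general strong algebras (from $x\to c=1$ and $c\leq a\to b$ one cannot conclude $x\to b=1$ using only reflexivity, transitivity, and monotonicity). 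So the ``real content'' you flag is indeed unresolved in your sketch; the construction in \cite{ImSpace} uses a different choice of $\nabla$ and $F$ to make the biconditional go through.
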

\begin{proof}
For $(i)$, first, notice that if $a \leq b$, then $\nabla 1 \wedge a \leq b$ which by adjunction implies $1 \leq a \to b$ and hence $a \to b=1$. Now, as a special case, since $a \leq a$, we have $a \to a=1$ which proves the internal reflexivity.
For the internal transitivity, let $d \in A$ be an arbitrary element satisfying $d \leq (a \to b) \wedge (b \to c)$. Therefore, $d \leq a \to b$ and $d \leq b \to c$. By the adjunction, $\nabla d \wedge a \leq b$ and $\nabla d \wedge b \leq c$. Hence, $\nabla d \wedge a \leq c$. By the adjunction again, $d \leq a \to c$. As $d$ is arbitrary, we have $(a \to b) \wedge (b \to c) \leq (a \to c)$. To show that $\to$ is increasing (resp. decreasing) in its succedent (antecedent), let
$a' \leq a$ and $b \leq b'$. Then, by the first observation, $(a' \to a)=(b \to b')=1$. Therefore, by the internal transitivity, we have 
\[
(a \to b)=(a' \to a) \wedge (a \to b) \wedge (b \to b') \leq (a' \to b').
\]
To show that $\to$ is meet-internalizing, note that for any $a \in A$, the operation $a \to (-)$ has a left adjoint and hence preserves all finite meets. To prove that $\to$ is join-internalizing, for any arbitrary $d \in A$, consider the following equivalences:
\[
d \leq (a \vee b) \to c \quad \text{iff} \quad \nabla d \wedge (a \vee b) \leq c \quad \text{iff} \quad  (\nabla d \wedge a) \vee (\nabla d \wedge b) \leq c \quad \text{iff}
\]
\[
[\nabla d \wedge a \leq c \;\; \& \;\; \nabla d \wedge b \leq c]
\quad \text{iff} \quad
[d \leq a \to c \;\; \& \;\; d \leq b \to c] \quad 
\text{iff} \quad 
\]
\[
d \leq (a \to c) \wedge (b \to c).
\]
Therefore, as $d$ is arbitrary, we reach $(a \vee b) \to c=(a \to c) \wedge (b \to c)$.

For the first part of $(ii)$, we refer the reader to Theorem 8 in \cite{ImSpace} that proves a generalized version of our claim for monoidal posets rather than meet semi-lattices. The proof also works verbatim here. For the second part, the reader is referred to Theorem 17 in \cite{ImSpace}. 
\end{proof}

To complete the relationship between the strong algebras and the $\nabla$-algebras, notice that Theorem \ref{NablaImpliesStrong} just ensures that any meet-internalizing abstract implication is the implication of a  $\nabla$-algebra, if we enlarge the bounded meet-semilattice base to a locale. However, without the enlargement part, the claim may not hold, even if the base is already a locale. In the following, working over locales, we will provide a characterization for the implications that come from the $\nabla$-algebras.
\begin{theorem}\label{StrongVsNabla}
Let $\mathscr{X}$ be a locale and $(\mathscr{X}, \to)$ be a strong algebra. Then, the following are equivalent:
\begin{description}
\item[$(i)$]
There is an operation $\nabla: \mathscr{X} \to \mathscr{X}$ such that $(\mathscr{X}, \nabla, \to)$ is a $\nabla$-algebra.
\item[$(ii)$]
The operation $\to$ internalizes both the arbitrary meets and the Heyting implication, where the former means $a \to \bigwedge_{i\in I} b_i=\bigwedge_{i\in I} (a \to b_i)$, for any $a \in \mathscr{X}$ and any family $\{b_i\}_{i \in I}$ of the elements of $\mathscr{X}$ and the latter means $a \to (b \supset c)=a \wedge b \to c$, for any $a, b, c \in \mathscr{X}$, where $\supset$ is the Heyting implication of $\mathscr{X}$.
\item[$(iii)$]
The operation $\Box$ preserves all arbitrary meets and for any $b, c \in \mathscr{X}$, we have $b \to c=\Box(b \supset c)$, where $\supset$ is the Heyting implication of $\mathscr{X}$.
\end{description}
\end{theorem}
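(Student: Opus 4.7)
The plan is to prove the three equivalences in the cycle $(i) \Rightarrow (ii) \Rightarrow (iii) \Rightarrow (i)$. The guiding idea is that $\Box = 1 \to (-)$, so condition $(iii)$ is a unary compression of $(ii)$ that still reconstructs the full binary $\to$ via the Heyting structure automatically present on the locale $\mathscr{X}$; and conversely, once $\Box$ admits a left adjoint, the two adjunctions (to $\Box$ and to $\supset$) will compose to yield the $\nabla$-algebra adjunction.

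For $(i) \Rightarrow (ii)$, the meet-internalization is immediate from Theorem \ref{BasicPropOfNablaAlg}: the operator $a \to (-)$ is a right adjoint, hence preserves all existing meets. For the Heyting internalization I would chase adjunctions, observing that for arbitrary $d \in \mathscr{X}$ the inequality $d \leq a \to (b \supset c)$ is equivalent to $\nabla d \wedge a \leq b \supset c$, and then to $\nabla d \wedge a \wedge b \leq c$ by the Heyting adjunction $(-) \wedge b \dashv b \supset (-)$, and finally to $d \leq (a \wedge b) \to c$ by the $\nabla$-algebra adjunction again.

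For $(ii) \Rightarrow (iii)$, I simply specialize both clauses of $(ii)$ at $a = 1$: the meet-internalization becomes preservation of arbitrary meets by $\Box$, and the Heyting-internalization collapses to $\Box(b \supset c) = 1 \to (b \supset c) = (1 \wedge b) \to c = b \to c$.

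The main obstacle will be $(iii) \Rightarrow (i)$, where the operation $\nabla$ has to be constructed from scratch. Because $\mathscr{X}$ is a complete lattice and $\Box$ preserves all meets by hypothesis, Theorem \ref{AdjointFunctorTheorem} (the adjoint functor theorem for posets) supplies a unique left adjoint $\nabla$ for $\Box$. To verify that $(\mathscr{X}, \nabla, \to)$ is indeed a $\nabla$-algebra, I will chain three biconditionals: for any $a, b, c \in \mathscr{X}$, $c \leq a \to b$ iff $c \leq \Box(a \supset b)$ (by $(iii)$), iff $\nabla c \leq a \supset b$ (by $\nabla \dashv \Box$), iff $\nabla c \wedge a \leq b$ (by the Heyting adjunction on $\mathscr{X}$). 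This closes the cycle and completes the equivalence.
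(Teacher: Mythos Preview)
Your proof is correct and follows essentially the same route as the paper: the same cycle $(i)\Rightarrow(ii)\Rightarrow(iii)\Rightarrow(i)$, the same adjunction chase for the Heyting-internalization, the specialization at $a=1$, and the appeal to the adjoint functor theorem to manufacture $\nabla$ as the left adjoint of $\Box$.
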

\begin{proof}
To prove $(ii)$ from $(i)$, since $\nabla(-) \wedge a \dashv a \to (-)$, the operation $a \to (-)$ must preserve all arbitrary meets. Moreover, for any $d \in \mathscr{X}$, we have the following equivalences: $d \leq a \to (b \supset c)$ iff $\nabla d \wedge a \leq b \supset c$ iff $\nabla d \wedge a \wedge b \leq c$ iff $d \leq a \wedge b \to c$. Therefore, $a \to (b \supset c)=a \wedge b \to c$. To prove $(iii)$ from $(ii)$, just set $a=1$ in both equations. Finally, to prove $(i)$ from $(iii)$, since $\Box$ preserves all meets and $\mathscr{X}$ is complete, by the adjoint functor theorem, Theorem \ref{AdjointFunctorTheorem}, $\Box$ has a left adjoint. Call it $\nabla$. As
\[
\nabla d \wedge b \leq c \;\;\; \text{iff} \;\;\; \nabla d \leq b \supset c \;\;\; \text{iff} \;\;\; d \leq \Box (b \supset c) \;\;\; \text{iff} \;\;\; d \leq b \to c,
\]
it is clear that this $\nabla$ works.
\end{proof}
In the following, we will provide an example of a locale $\mathscr{X}$ and a join- and meet-internalizing implication $\to$ such that $a \to (-)$ preserves all the meets, for any $a \in \mathscr{X}$, while it does not internalize the Heyting implication and hence is not a part of a $\nabla$-algebra.
\begin{example}
Consider the set $\{a, b\}$ and define $\leq$ as the partial order generated by the inequality $a \leq b$. Define $f: \{a, b\} \to \{a, b\}$ as $f(a)=f(b)=a$ and set $X$ as the topological space of the upsets of $\{a, b\}$. Hence, $\mathcal{O}(X)$ looks like:
\[\begin{tikzcd}
	{\{a,b\}} \\
	{\{b\}} \\
	\varnothing
	\arrow[no head, from=3-1, to=2-1]
	\arrow[no head, from=2-1, to=1-1]
\end{tikzcd}\]
\normalsize
As $f$ is order-preserving, it is also continuous. Therefore, $f^{-1}$ maps $\mathcal{O}(X)$ into $\mathcal{O}(X)$.
Define $U \to V=f^{-1}(U) \supset f^{-1}(V)$, where $\supset$ is the Heyting implication of $\mathcal{O}(X)$. As $f^{-1}$ is order-preserving, it is easy to check that $\to$ is an implication on $\mathcal{O}(X)$ and as $f^{-1}$ preserves the arbitrary unions and intersections, it is also possible to show that $\to$ is both meet- and join-internalizing. Moreover, as $X$ is finite, any meet is a finite meet. Therefore, the operation $U \to (-)$ is meet-preserving, for any $U \in \mathcal{O}(X)$. However, $\to$ does not internalize the Heyting implication and hence there is no $\nabla$ to make $(\mathcal{O}(X), \nabla, \to)$ a $\nabla$-algebra. Assuming otherwise, we must have $f^{-1}(U) \supset f^{-1}(V)=U \to V=\Box(U \supset V)=f^{-1}(U \supset V)$, for any opens $U$ and $V$ of $X$.
Set $U=\{b\}$ and $V=\varnothing$. Since $f^{-1}(\{b\})=\varnothing$, we have $f^{-1}(\{b\}) \supset f^{-1}(\varnothing)=X$, while $(\{b\} \supset \varnothing)=\varnothing$ and hence $f^{-1}(\{b\} \supset \varnothing)=\varnothing$. Therefore,  $f^{-1}(U \supset V) \neq f^{-1}(U) \supset f^{-1}(V)$ which is a contradiction. 
\end{example}

\section{Subdirectly irreducible and Simple Normal Distributive $\nabla$-algebras}\label{SimpleAlg}

Let us recall some basic definitions from universal algebra specified to the setting of $\nabla$-algebras. Let $\mathcal{A}=(\mathsf{A}, \nabla, \to)$ be a $\nabla$-algebra. A binary relation $\theta \subseteq A \times A$ is called a \emph{congruence relation} if it is an equivalence relation compatible with all the algebraic operations in the signature, namely the finite meets, the finite joins, $\nabla$ and $\to$. We denote the set of all congruence relations of $\mathcal{A}$ by $\Theta(\mathcal{A})$. Any $\nabla$-algebra has two trivial congruence relations, namely the equality and the whole set $A \times A$. A $\nabla$-algebra is called \emph{simple}, if it has no nontrivial congruence relation and it is called \emph{subdirectly irreducible}, if it is either trivial (with exactly one element) or it has a least non-identity congruence with respect to the inclusion. For more information on the general universal algebraic notions, see \cite{Sank} and for the characterization of subdirectly irreducible and simple Heyting algebras, see \cite{Esakia}.

\begin{remark}
Working with normal explicitly Heyting algebras, the previous definition of congruence relations may appear somewhat ambiguous, as it is not clear if a congruence relation must also respect the Heyting implications. To make the definition more clear, let us emphasize that we do not assume this preservation condition. However, we will see that it automatically follows from the original definition. This makes the definition natural, even in its extended signature.
\end{remark}

To provide a characterization for simple and subdirectly irreducible normal distributive $\nabla$-algebras, as it is well-known in the theory of Heyting algebras, we establish a connection between the congruence relations and a family of filters. In our case, the filters we are interested in are what we call the \emph{modal filters}.

\begin{definition}
Let $\mathcal{A}$ be a normal $\nabla$-algebra. By a \emph{modal filter} $F$ on $\mathcal{A}$, we mean an upset of $\mathcal{A}$, closed under all finite meets and the modal operations $\Box$ and $\nabla$. We denote the class of all modal filters of $\mathcal{A}$ by $\mathcal{M}(\mathcal{A})$. 
\end{definition}

\begin{example}
Any normal $\nabla$-algebra has two trivial modal filters $F=\{1\}$ and $F=A$. The latter is clear. For the former, notice that the normality implies $\nabla 1=1$, while $\Box 1=1$ holds in all $\nabla$-algebras.
\end{example}

The following lemma proves the existence and also provides an explicit description for the least modal filter containing a given set $S$ of the elements of a normal $\nabla$-algebra.

\begin{lemma}\label{GenModFil}
Let $\mathcal{A}$ be a normal $\nabla$-algebra. For any subset $S \subseteq A$, the least modal filter extending the set $S$, denoted by $m(S)$, exists and is 
described as:
\[
m(S)=\{y \in A \mid \exists I \, \forall i \in I \, \exists m_i, n_i \in \mathbb{N} \; \exists s_i \in S \; (\bigwedge_{i \in I} \nabla^{m_i} \Box^{n_i} s_i \leq y)\},
\]
where $I$ ranges over the \emph{finite} sets. We will denote $m(\{x\})$ by $m(x)$.
\end{lemma}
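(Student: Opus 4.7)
Let $T$ denote the right-hand side of the claimed formula. The plan is to show that $T$ is a modal filter containing $S$, and then observe that any modal filter containing $S$ must contain $T$; minimality plus existence of $T$ give the claim. Throughout I freely use Theorem \ref{BasicPropOfNablaAlg}: $\nabla \dashv \Box$, so $\Box$ preserves all meets (since it is a right adjoint) and $\mathrm{id} \leq \Box \nabla$ and $\nabla \Box \leq \mathrm{id}$; I also use normality in the form $\nabla 1 = 1$ and $\nabla(a \wedge b) = \nabla a \wedge \nabla b$.

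For $S \subseteq T$, take the singleton index set with $m_1 = n_1 = 0$ and $s_1 = s$, giving $s \leq s$. The upset property is immediate from the definition, since the witness for $y$ also witnesses any $z \geq y$. Closure under finite meets follows by juxtaposing witnesses: if $\bigwedge_{i \in I} \nabla^{m_i} \Box^{n_i} s_i \leq y$ and $\bigwedge_{j \in J} \nabla^{m'_j} \Box^{n'_j} s'_j \leq y'$ with $I, J$ disjoint, then the meet over $I \sqcup J$ lies below $y \wedge y'$. Finally, $1 \in T$ (either via the empty index set, or via any singleton with $s \in S$ combined with the upset property together with the case $S = \varnothing$ handled separately, which yields the trivial modal filter $\{1\}$).

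The interesting steps are closure under $\nabla$ and $\Box$. For $\nabla$: by normality, $\nabla$ preserves finite meets, so $\nabla\bigwedge_{i \in I} \nabla^{m_i} \Box^{n_i} s_i = \bigwedge_{i \in I} \nabla^{m_i+1} \Box^{n_i} s_i$, and this is at most $\nabla y$ by monotonicity. For $\Box$: since $\Box$ preserves all meets, $\Box y \geq \bigwedge_{i \in I} \Box \nabla^{m_i} \Box^{n_i} s_i$, and it suffices to exhibit, for each $i$, a term of the form $\nabla^{m'_i} \Box^{n'_i} s_i$ bounded above by $\Box \nabla^{m_i} \Box^{n_i} s_i$. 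If $m_i = 0$, take $(m'_i, n'_i) = (0, n_i + 1)$, giving equality. If $m_i \geq 1$, apply $\mathrm{id} \leq \Box \nabla$ to $\nabla^{m_i - 1} \Box^{n_i} s_i$ to obtain $\nabla^{m_i - 1} \Box^{n_i} s_i \leq \Box \nabla^{m_i} \Box^{n_i} s_i$; so take $(m'_i, n'_i) = (m_i - 1, n_i)$. Taking the meet over $i$ produces the required witness for $\Box y$. This case split is the one genuinely delicate step, and it is where the asymmetry between $\nabla$ (preserving arbitrary joins plus, by normality, finite meets) and $\Box$ (preserving arbitrary meets but not necessarily joins) shows up.

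For minimality, let $F$ be any modal filter with $S \subseteq F$. Closure of $F$ under $\nabla$ and $\Box$ yields $\nabla^m \Box^n s \in F$ for every $s \in S$ and $m, n \in \mathbb{N}$; closure under finite meets gives $\bigwedge_{i \in I} \nabla^{m_i} \Box^{n_i} s_i \in F$ for every finite family; and the upset property then gives every $y$ dominating such a meet. Hence $T \subseteq F$, so $T$ is indeed the least modal filter containing $S$, and we set $m(S) := T$.
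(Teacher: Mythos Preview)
Your proof is correct and follows essentially the same approach as the paper's: both reduce the claim to showing $T$ is a modal filter, both dispatch the filter axioms and $\nabla$-closure quickly using normality, and both handle the delicate $\Box$-closure step by the identical case split on whether $m_i=0$ (increment $n_i$) or $m_i\geq 1$ (decrement $m_i$ via $\mathrm{id}\leq\Box\nabla$). Your write-up is in fact more explicit on the routine parts (containment of $S$, upset property, minimality) that the paper leaves as ``clear''.
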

\begin{proof}
It is clear that any modal filter that extends $S$ includes $m(S)$. Therefore, it is enough to show that $m(S)$ is a modal filter itself. It is clearly a filter and since $\nabla$ commutes with all finite meets, it is also  closed under $\nabla$. To show its closure under $\Box$, assume $y \in m(S)$. Hence, there is a finite set $I$ such that for any $i \in I$, there are $m_i, n_i \in \mathbb{N}$ and $s_i \in S$ such that $\bigwedge_{i\in I} \nabla^{m_i} \Box^{n_i} s_i \leq y$. Therefore, $\Box \bigwedge_{i\in I} \nabla^{m_i} \Box^{n_i} s_i \leq \Box y$. Since $\Box$ is a right adjoint, it commutes with meets and hence $ \bigwedge_{i \in I} \Box \nabla^{m_i} \Box^{n_i} s_i \leq \Box y$. Define $J$ as the set of all $i$'s in $I$ such that $m_i> 0$. Then, since for any $x \in A$, we have $x \leq \Box \nabla x$, we have $ \nabla^{m_i-1} \Box^{n_i} s_i \leq \Box \nabla^{m_i} \Box^{n_i} s_i $, for any $i \in J$. Hence, 
\[
\left( \bigwedge_{i \in I-J} \Box^{n_i+1} s_i \right) \wedge \left( \bigwedge_{i \in J}  \nabla^{m_i-1} \Box^{n_i} s_i \right) \leq \left( \bigwedge_{i \in I-J} \Box^{n_i+1} s_i \right) \wedge \left( \bigwedge_{i \in J}  \Box \nabla^{m_i} \Box^{n_i} s_i \right)
\]
As the right hand side equals to $\bigwedge_{i \in I} \Box \nabla^{m_i} \Box^{n_i} s_i$ and we observed that it is smaller than $\Box y$, we have $\Box y \in m(S)$.
\end{proof}
In the following, we will show that the modal filters and the congruence relations in normal distributive $\nabla$-algebras are in one-to-one correspondence. To establish that connection, we need the following lemma:
\begin{lemma}\label{InternalCong}
In any normal distributive $\nabla$-algebra, the following inequalities are satisfied:
\begin{description}
\item[$(1)$]
$x \to y \leq (x \wedge z) \to (y \wedge z)$,
\item[$(2)$]
$x \to y \leq (x \vee z) \to (y \vee z)$,
\item[$(3)$]
$\nabla (x \to y) \leq \nabla x \to \nabla y$,
\item[$(4)$]
$\Box (x \to y) \leq (z \to x) \to (z \to y)$,
\item[$(5)$]
$\Box (x \to y) \leq (y \to z) \to (x \to z)$.
\end{description}
If the $\nabla$-algebra is also Heyting with the Heyting implication $\supset$, we also have:
\begin{description}
\item[$(6)$]
$x \to y \leq (z \supset x) \to (z \supset y)$,
\item[$(7)$]
$x \to y \leq (y \supset z) \to (x \supset z)$.
\end{description}
\end{lemma}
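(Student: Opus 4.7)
The unifying strategy is to apply the defining adjunction $\nabla(-)\wedge a \dashv a \to (-)$ to each inequality. That is, to show $u \leq a \to b$, I transpose to $\nabla u \wedge a \leq b$ and discharge the resulting inequality using the toolkit already collected in Theorem \ref{BasicPropOfNablaAlg} (monotonicity of $\nabla$ and $\to$, together with $\nabla(a\to b)\wedge a \leq b$ and $\nabla\Box a \leq a$) and the internal transitivity $(a\to b)\wedge(b\to c)\leq a\to c$ proved in part $(i)$ of Theorem \ref{NablaImpliesStrong}. Normality is only needed for $(3)$ and distributivity is only needed for $(2)$; the rest work over any $\nabla$-algebra.

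For $(1)$, transposition gives $\nabla(x\to y)\wedge x \wedge z \leq y \wedge z$, which follows by meeting $\nabla(x\to y)\wedge x \leq y$ with $z$. For $(2)$, transposition and distributivity give
\[
\nabla(x\to y)\wedge(x\vee z) = \bigl(\nabla(x\to y)\wedge x\bigr)\vee\bigl(\nabla(x\to y)\wedge z\bigr)\leq y\vee z.
\]
For $(3)$, I want $\nabla\nabla(x\to y)\wedge\nabla x \leq \nabla y$; by normality this equals $\nabla\bigl(\nabla(x\to y)\wedge x\bigr)$, which is $\leq \nabla y$ by the monotonicity of $\nabla$ applied to $\nabla(x\to y)\wedge x \leq y$.

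For $(4)$ and $(5)$, after transposition I must show $\nabla\Box(x\to y)\wedge(z\to x)\leq z\to y$ and $\nabla\Box(x\to y)\wedge(y\to z)\leq x\to z$ respectively. Both reduce immediately once I use $\nabla\Box(x\to y)\leq x\to y$ (Theorem \ref{BasicPropOfNablaAlg}$(iii)$) to bound the first conjunct by $x\to y$, and then invoke internal transitivity to combine $(z\to x)\wedge(x\to y)\leq z\to y$ and $(x\to y)\wedge(y\to z)\leq x\to z$. For the Heyting inequalities $(6)$ and $(7)$, transposition followed by the Heyting adjunction $(-)\wedge z \dashv z \supset (-)$ reduces $(6)$ to $\nabla(x\to y)\wedge(z\supset x)\wedge z \leq y$, which follows from the Heyting modus ponens $(z\supset x)\wedge z \leq x$ together with $\nabla(x\to y)\wedge x \leq y$; and reduces $(7)$ to $\nabla(x\to y)\wedge(y\supset z)\wedge x \leq z$, which follows from $\nabla(x\to y)\wedge x \leq y$ and the Heyting modus ponens $(y\supset z)\wedge y \leq z$.

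No step presents a real obstacle; the only thing to keep track of is which auxiliary fact (pure adjunction, normality, distributivity, $\nabla\Box a\leq a$, internal transitivity, or the Heyting adjunction) powers each reduction, and to apply the transposition in the right direction.
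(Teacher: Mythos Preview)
Your proof is correct and follows essentially the same route as the paper: transpose via the adjunction, then discharge using $\nabla(a\to b)\wedge a\leq b$, $\nabla\Box a\leq a$, internal transitivity, normality (for $(3)$), distributivity (for $(2)$), and the Heyting adjunction (for $(6)$ and $(7)$). Your remark that normality and distributivity are needed only for $(3)$ and $(2)$ respectively is a nice addition not made explicit in the paper.
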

\begin{proof}
For $(1)$, using the adjunction, it is enough to prove $\nabla (x \to y) \wedge x \wedge z \leq y \wedge z$, which is clear from $\nabla (x \to y) \wedge x \leq y$. For $(2)$, using the adjunction, we have to show that $\nabla (x \to y) \wedge (x \vee z) \leq y \vee z$. Using distributivity and the fact that $\nabla (x \to y) \wedge x \leq y$, the claim easily follows. For $(3)$, by normality, we have 
\[
\nabla \nabla (x \to y) \wedge \nabla x=\nabla [\nabla (x \to y) \wedge x] \leq \nabla y.
\]
Hence, by adjunction, $\nabla (x \to y) \leq \nabla x \to \nabla y$. For $(4)$, by the adjunction, we have to show 
\[
\nabla \Box (x \to y) \wedge (z \to x) \leq (z \to y).
\]
By Theorem \ref{BasicPropOfNablaAlg}, part (iii), we have $\nabla \Box (x \to y) \leq (x \to y)$. Therefore, using the fact that the operation $\to$ is an implication, Theorem \ref{NablaImpliesStrong}, the claim easily follows. The proof for $(5)$ is similar to that of $(4)$. The parts $(6)$ and $(7)$ are some easy consequences of $\nabla (x \to y) \wedge x \leq y$.
\end{proof}

\begin{theorem}\label{FilWithConCor}
Let $\mathcal{A}$ be a normal distributive $\nabla$-algebra. Then, the following operations provide a one-to-one correspondence between the poset of the modal filters of $\mathcal{A}$ and the poset of the congruences on $\mathcal{A}$, both ordered by the inclusion:
\begin{itemize}
\item[]
$\alpha : \mathcal{M}(\mathcal{A}) \to \Theta(\mathcal{A})$, defined by $\alpha (F)=\{(x, y) \in A^2 \mid x \leftrightarrow y \in F \}$, 
\item[]
$\beta : \Theta (\mathcal{A}) \to \mathcal{M}(\mathcal{A})$, defined by $\beta (\theta)=\{x \in A \mid (x, 1) \in \theta \}$.
\end{itemize}
where $x \leftrightarrow y$ is an abbreviation for $(x \to y) \wedge (y \to x)$.
\end{theorem}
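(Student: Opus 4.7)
The plan is to verify, in order: (i) $\alpha(F)$ is a congruence for every modal filter $F$; (ii) $\beta(\theta)$ is a modal filter for every congruence $\theta$; (iii) $\beta\alpha(F) = F$; and (iv) $\alpha\beta(\theta) = \theta$. Monotonicity of both maps with respect to inclusion is immediate, so once the bijection is established the poset isomorphism follows at no extra cost.

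For (i), reflexivity of $\alpha(F)$ is internal reflexivity $a \to a = 1 \in F$, symmetry is built into $\leftrightarrow$, and transitivity follows from the internal transitivity of $\to$ (Theorem \ref{NablaImpliesStrong}(i)) together with closure of $F$ under finite meets. Compatibility with $\wedge$, $\vee$, $\nabla$, and $\to$ is exactly what Lemma \ref{InternalCong} is designed for: each inequality lets me pass from $x \to y, y \to x \in F$ to the corresponding implications between the outputs of the operation in question. Crucially, for $\nabla$ I need closure of $F$ under $\nabla$, and for $\to$ I need closure of $F$ under $\Box$, which is precisely why the definition of modal filter asks for both. For (ii), $\beta(\theta)$ is an upset since $x \leq y$ and $(x,1) \in \theta$ give $(y,1) = (x \vee y, 1 \vee y) \in \theta$ by compatibility with $\vee$; closure under finite meets is compatibility with $\wedge$; closure under $\nabla$ uses normality $\nabla 1 = 1$; closure under $\Box$ is the universal identity $\Box 1 = 1$.

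For (iii), a direct computation gives $x \leftrightarrow 1 = \Box x$ (since $x \to 1 = 1$ always, and $1 \to x = \Box x$ by definition), so $\beta\alpha(F) = \{x : \Box x \in F\}$. Closure of $F$ under $\Box$ gives $F \subseteq \beta\alpha(F)$; conversely, if $\Box x \in F$ then $\nabla \Box x \in F$ by closure under $\nabla$, and since $\nabla \Box x \leq x$ by Theorem \ref{BasicPropOfNablaAlg}(iii), upward closure yields $x \in F$. For (iv), the inclusion $\theta \subseteq \alpha\beta(\theta)$ is direct: from $(x,y) \in \theta$, compatibility with $\to$ and $\wedge$ yields $(x \leftrightarrow y, 1) \in \theta$. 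The reverse inclusion is the delicate step. Given $(x \leftrightarrow y, 1) \in \theta$, congruence arithmetic (using $x \leftrightarrow y \leq x \to y$ and $x \leftrightarrow y \leq y \to x$ together with meet preservation) immediately gives $(x \to y, 1), (y \to x, 1) \in \theta$. Applying $\nabla$ and invoking normality yields $\nabla(y \to x) \equiv \nabla 1 = 1 \pmod{\theta}$, hence $\nabla(y \to x) \wedge y \equiv y \pmod{\theta}$; but $\nabla(y \to x) \wedge y \leq x$ by Theorem \ref{BasicPropOfNablaAlg}(iii), so $\nabla(y \to x) \wedge y = x \wedge \nabla(y \to x) \wedge y \equiv x \wedge y \pmod{\theta}$. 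Thus $y \equiv x \wedge y \pmod{\theta}$, and symmetrically $x \equiv x \wedge y$, giving $(x,y) \in \theta$.

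The main obstacle is the reverse inclusion in step (iv): recovering $(x,y) \in \theta$ from the weaker datum $(x \leftrightarrow y, 1) \in \theta$ requires both normality (to replace $\nabla 1$ by $1$) and the $\nabla$-algebra identity $\nabla(y \to x) \wedge y \leq x$; together these force $x$ and $y$ to be congruent to their common meet modulo $\theta$. Normality is exactly what makes this step go through, which also explains why the theorem is stated in the normal setting.
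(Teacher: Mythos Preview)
Your proof is correct and follows essentially the same route as the paper's: both verify that $\alpha$ and $\beta$ are well-defined via Lemma~\ref{InternalCong} and the defining properties of modal filters, and then check the two round-trip identities using $\nabla\Box x \leq x$ and the adjunction identity $\nabla(y \to x) \wedge y \leq x$ together with normality ($\nabla 1 = 1$). Your argument for upward-closedness of $\beta(\theta)$ via $\vee$-compatibility is in fact slightly cleaner than the paper's, and in step~(iv) you reach $(x,y)\in\theta$ through $x \wedge y$ where the paper uses $x \vee y$, but these are cosmetic dualizations of the same idea.
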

\begin{proof}
First, we show that $\alpha(F)$ is a congruence relation and  $\beta(\theta)$ is a modal filter, for any modal filter $F$ and any congruence relation $\theta$. 
For the former, by Theorem \ref{NablaImpliesStrong}, $\to$ is an implication. Hence, we have $x \to x=1$ and $(x \to y) \wedge (y \to z) \leq (x \to z)$, for any $x, y, z \in A$. Using this fact and the symmetric definition of $\alpha(F)$, it is easy to prove that $\alpha(F)$ is an equivalence relation. To prove that $\alpha(F)$ respects all the operations in the signature, it is enough to use Lemma \ref{InternalCong} and the fact that $F$ is a filter closed under $\Box$ and $\nabla$. We only prove the hardest case of implication, the rest are similar. To show that $\alpha(F)$ respects the operation $\to$, we prove that $(x, y), (z, w) \in \alpha(F)$ imply $(x \to z, y \to w) \in \alpha(F)$.
By definition, as $(x, y), (z, w) \in \alpha(F)$, we have $x \leftrightarrow y, z \leftrightarrow w \in F$. Since $F$ is a filter, $y \rightarrow x, z \rightarrow w \in F$. Since $F$ is a modal filter, $\Box (y \rightarrow x), \Box (z \rightarrow w) \in F$. By Lemma \ref{InternalCong}, parts $(4)$ and $(5)$, we have
\[
\Box (y \rightarrow x) \wedge \Box (z \rightarrow w) \leq [(x \to z) \to (y \to z)] \wedge [(y \to z) \to (y \to w)].
\] 
Since $\to$ is an implication, we have
\[
[(x \to z) \to (y \to z)] \wedge [(y \to z) \to (y \to w)] \leq (x \to z) \to (y \to w),
\]
and as $F$ is a filter, we have $(x \to z) \to (y \to w) \in F$. Similarly, $(y \to w) \to (x \to z) \in F$. Hence, $(x \to z, y \to w) \in \alpha(F)$.\\
To prove that $\beta(\theta)$ is a modal filter, the only thing to check is the upward-closedness of $\beta (\theta)$. The rest is a clear consequence of the equalities $1 \wedge 1=\nabla 1=\Box 1=1$. Now, assume $x \leq y$ and $x \in \beta (\theta)$. Since $x \leq y$, we have $x \to y=1$. Therefore, since $\theta$ is a congruence relation and $(x, 1) \in \theta$, we have $(1 \to y, x \to y) \in \theta$. Hence, $(1 \to y, 1) \in \theta$ and then $(\nabla (1 \to y),\nabla 1) \in \theta$. By disjunction with $y$ on both sides, we have $(\nabla (1 \to y) \vee y,\nabla 1 \vee y) \in \theta$. Since $\nabla (1 \to y) \leq y$ and $\nabla 1=1$, we have $(y, 1) \in \theta$. Therefore, $y \in \beta(\theta)$.\\
Notice that both $\alpha$ and $\beta$ are trivially monotone with respect to the inclusion.
To prove $\alpha (\beta (\theta))=\theta$ and $\beta (\alpha (F))=F$, we have to show that 
\[
x \in F \;\;\; \text{iff} \;\;\; 1 \leftrightarrow x \in F \;\;\;\;\;\;\;\;\;\;\;\;\;\;\;\;  (x, y) \in \theta \;\;\; \text{iff} \;\;\; (x \leftrightarrow y, 1) \in \theta.
\]
For the left equivalence, if $x \in F$, since $x \to 1=1$, we have $x \to 1 \in F$ and since $F$ is a modal filter, we have $\Box x=1 \to x \in F$. Therefore, $1 \leftrightarrow x \in F$. For the converse, if $1 \leftrightarrow x \in F$, then $1 \to x \in F$ and as $F$ is upward-closed and also closed under $\nabla$, using the fact that $\nabla (1 \to x) \leq x$, we have $x \in F$.\\
For the right equivalence, if $(x, y) \in \theta$, since $\theta$ is a congruence relation, we have $(x \leftrightarrow y, x \leftrightarrow x) \in \theta$ which implies $(x \leftrightarrow y, 1) \in \theta$, by the fact that $\to$ is an implication and hence $x \to x=1$. For the converse, if $(x \leftrightarrow y, 1) \in \theta$, then since $\beta (\theta)$ is a filter, we have $(x \to y, 1) \in \theta$ and hence $(y\vee[x \wedge \nabla (x \to y)], y \vee [x \wedge \nabla 1]) \in \theta$. Therefore, as $x \wedge \nabla (x \to y) \leq y$ and $\nabla 1=1$, we have $(y, y \vee x) \in \theta$. By symmetry, we also have $(x, x \vee y) \in \theta$ and hence $(x,  y) \in \theta$.
\end{proof}

\begin{remark}\label{CongruenceHeyting}
Theorem \ref{FilWithConCor} shows that if $\mathcal{A}$ is also a Heyting algebra and $\theta$ is a congruence relation, then $\theta$ also respects the Heyting implication. To prove this, assume $(x, y), (z, w) \in \theta$. Set $F=\beta(\theta)$. By Theorem \ref{FilWithConCor}, we have $\theta=\alpha(F)$. Hence, $x \leftrightarrow y, z \leftrightarrow w \in F$. Then, by the parts $(6)$ and $(7)$ in Lemma  \ref{InternalCong}, we have
\[
(y \rightarrow x) \wedge (z \rightarrow w) \leq [(x \supset z) \to (y \supset z)] \wedge [(y \supset z) \to (y \supset w)].
\] 
Since $\to$ is an implication, we have
\[
[(x \supset z) \to (y \supset z)] \wedge [(y \supset z) \to (y \supset w)] \leq (x \supset z) \to (y \supset w).
\]
Hence, we have $(x \supset z) \to (y \supset w) \in F$. Similarly, $(y \supset w) \to (x \supset z)  \in F$. Hence, $(x \supset z) \leftrightarrow (y \supset w) \in F$ which implies $(x \supset z, y \supset w) \in \theta$. This remark ensures that all the following theorems on congruence extension property, subdirectly irreducible and simple normal distributive $\nabla$-algebras also hold for the normal explicitly Heyting $\nabla$-algebras, where the Heyting implication is explicitly mentioned in the signature of the algebras.
\end{remark}

\begin{remark}\label{EquivalenceBetweenTrivialities}
By Theorem \ref{FilWithConCor}, the trivial modal filters $F=\{1\}$ and $F=A$ correspond to the trivial congruence relations $\theta=\{(x, x) \in A^2 \mid x \in A\}$ and $\theta=A \times A$, respectively.
\end{remark}

\begin{definition}
A class $\mathfrak{C}$ of $\nabla$-algebras has the \emph{congruence extension property} if for any $\mathcal{B}$ in $\mathfrak{C}$, any sub-algebra $\mathcal{A}$ of $\mathcal{B}$ and any congruence relation $\theta$ over $\mathcal{A}$, there exists a congruence relation $\phi$ over $\mathcal{B}$ such that $\phi|_{A}=\theta$. 
\end{definition}

\begin{corollary}
The variety $\mathcal{V}(D, N)$ and all of its subclasses have the congruence extension property. The same also holds for $\mathcal{V}_H(N)$.
\end{corollary}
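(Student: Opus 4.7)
The plan is to leverage the bijection between modal filters and congruences established in Theorem \ref{FilWithConCor}. Given $\mathcal{B} \in \mathcal{V}(D,N)$, a sub-algebra $\mathcal{A}$, and a congruence $\theta$ on $\mathcal{A}$, I would first pass to the modal filter $F = \beta(\theta) \subseteq A$, then form the modal filter $F' = m_{\mathcal{B}}(F) \subseteq B$ generated by $F$ inside $\mathcal{B}$ via Lemma \ref{GenModFil}, and finally define $\phi = \alpha(F')$. The whole task then reduces to verifying that $\phi|_A = \theta$, which by the bijection is equivalent to showing $F' \cap A = F$.

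The inclusion $F \subseteq F' \cap A$ is immediate. The reverse inclusion is the substantive step, and it is handled by the explicit description of $m_{\mathcal{B}}(F)$ in Lemma \ref{GenModFil}: if $y \in F' \cap A$, then there is a finite $I$ together with $s_i \in F$, $m_i, n_i \in \mathbb{N}$ such that
\[
\bigwedge_{i \in I} \nabla^{m_i}\Box^{n_i} s_i \, \leq \, y
\]
holds in $\mathcal{B}$. Because $\mathcal{A}$ is a sub-algebra in the signature $(\wedge,\vee,0,1,\nabla,\to)$ (and $\Box = 1 \to -$ is therefore also inherited from $\mathcal{B}$), every term on the left lies in $A$ and the inequality is valid already in $\mathcal{A}$. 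Since $F$ is a modal filter of $\mathcal{A}$, it is closed under $\nabla$, $\Box$, finite meets, and is upward closed, so $y \in F$. This is where I expect the only real subtlety, namely tracking that the subalgebra condition really transports all the relevant operations, including the derived $\Box$, from $\mathcal{B}$ down to $\mathcal{A}$; once this is in place, the argument is essentially bookkeeping.

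For subclasses of $\mathcal{V}(D,N)$ defined by extra equations in the list $\{R, L, Fa, Fu\}$, the same $\phi$ works: the class is closed under taking subalgebras and the ambient algebra $\mathcal{B}$ already satisfies the extra equations, so $\phi$ is automatically a congruence of an algebra in the subclass. For $\mathcal{V}_H(N)$, one invokes Remark \ref{CongruenceHeyting}: although $\phi$ was produced only from the bounded lattice, $\nabla$, and $\to$ operations, it automatically respects the Heyting implication whenever the base algebra is Heyting, so $\phi$ is a genuine $\mathcal{V}_H(N)$-congruence extending $\theta$. This completes the strategy.
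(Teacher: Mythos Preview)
Your proposal is correct and follows essentially the same route as the paper: pass from $\theta$ to the modal filter $F=\beta(\theta)$, generate $m_{\mathcal{B}}(F)$ in $\mathcal{B}$, set $\phi=\alpha(m_{\mathcal{B}}(F))$, and use the explicit description in Lemma~\ref{GenModFil} together with the sub-algebra inclusion to show $m_{\mathcal{B}}(F)\cap A=F$, invoking Remark~\ref{CongruenceHeyting} for the Heyting case. Your explicit remark that $\Box=1\to(-)$ is a derived operation and hence inherited by sub-algebras is a nice clarification that the paper leaves implicit.
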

\begin{proof}
For the first part, let $\mathcal{A}$ and $\mathcal{B}$ be two normal distributive $\nabla$-algebras, $\mathcal{A}$ be a sub-algebra of $\mathcal{B}$ and $\theta$ be a congruence relation over $\mathcal{A}$. Define $\phi$ on $\mathcal{B}$ by $\{(x, y) \in B^2 \mid x \leftrightarrow y \in m_{\mathcal{B}}( \beta(\theta)) \}$, where $\beta(\theta)$ is the corresponding modal filter to $\theta$ over $\mathcal{A}$ and $m_{\mathcal{B}}( \beta(\theta))$ is the least modal filter in $\mathcal{B}$ that includes $\beta(\theta)$. By Theorem \ref{FilWithConCor}, $\phi$ is a congruence relation over $\mathcal{B}$. Hence, the only thing remained to prove is that $\phi|_{A}=\theta$. Let $a, b \in A$. Then, we have to show that $(a, b) \in \phi $ iff $(a, b) \in \theta$. Using Theorem \ref{FilWithConCor} for $\mathcal{A}$, the latter is equivalent to $a \leftrightarrow b \in \beta(\theta)$. Now, it is enough to show that $a \leftrightarrow b \in m_{\mathcal{B}}( \beta(\theta))$ iff $a \leftrightarrow b \in \beta (\theta)$. The latter is actually true in a more general form: $c \in m_{\mathcal{B}}(F)$ iff $c \in F$, for any $c \in A$ and any modal filter $F$ over $\mathcal{A}$. One direction is clear. For the other direction, note that if $c \in m_{\mathcal{B}}(F)$, then by Lemma \ref{GenModFil}, there exists a finite set $I$ such that for any $i \in I$, there are $m_i, n_i \in \mathbb{N}$ and $a_i \in F$ such that $\bigwedge_{i \in I} \nabla^{m_i} \Box^{n_i} a_i \leq_{\mathcal{B}} c$. Since, $\mathcal{A}$ is a sub-algebra of $\mathcal{B}$, we have $\bigwedge_{i \in I} \nabla^{m_i} \Box^{n_i} a_i \leq_{\mathcal{A}} c$ which finally implies $c \in F$. This completes the proof of the first part.
For the second part, use the first part and Remark \ref{CongruenceHeyting}.
\end{proof}

\begin{corollary}\label{SubIre}
A non-trivial normal distributive $\nabla$-algebra $\mathcal{A}$ is subdirectly irreducible iff there exists $x \in A-\{1\}$ such that for any $y \in A -\{1\}$, there exists a finite set $I$ such that for any $i \in I$, there are $m_i, n_i \in \mathbb{N}$ satisfying $\bigwedge_{i \in I} \nabla^{m_i} \Box^{n_i} y \leq x$. The same also holds for any $\mathcal{A} \in \mathcal{V}_H(N)$.
\end{corollary}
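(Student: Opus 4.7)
The plan is to translate the statement into the language of modal filters via Theorem \ref{FilWithConCor} and then use the explicit description of the generated modal filter from Lemma \ref{GenModFil}. Since $\alpha$ and $\beta$ are mutually inverse inclusion-preserving bijections between $\mathcal{M}(\mathcal{A})$ and $\Theta(\mathcal{A})$, and since by Remark \ref{EquivalenceBetweenTrivialities} they match the trivial modal filter $\{1\}$ with the identity congruence, the $\nabla$-algebra $\mathcal{A}$ is subdirectly irreducible exactly when the poset $\mathcal{M}(\mathcal{A})\setminus\{\{1\}\}$ of non-trivial modal filters has a least element. Moreover, observe that, by Lemma \ref{GenModFil}, the condition displayed in the statement is precisely the statement that $x \in m(y)$.

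For the forward direction, assume $\mathcal{A}$ is non-trivial and subdirectly irreducible, and let $F_0$ be the least non-trivial modal filter. Since $F_0 \neq \{1\}$, we may pick some $x \in F_0$ with $x \neq 1$; for any $y \neq 1$, the modal filter $m(y)$ is itself non-trivial (it contains $y$), hence $F_0 \subseteq m(y)$, so $x \in m(y)$, which by Lemma \ref{GenModFil} gives the claimed inequality. Conversely, suppose such an $x$ exists and set $F_0 := m(x)$; this is a non-trivial modal filter since $x \in F_0$ and $x \neq 1$. For any non-trivial modal filter $G$, pick $y \in G$ with $y \neq 1$; the hypothesis gives $x \in m(y)$, and since $G$ is a modal filter containing $y$, we have $m(y) \subseteq G$, whence $x \in G$ and therefore $F_0 = m(x) \subseteq G$. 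Thus $F_0$ is the least non-trivial modal filter, and by the correspondence $\alpha(F_0)$ is the least non-identity congruence, so $\mathcal{A}$ is subdirectly irreducible.

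For the case $\mathcal{A} \in \mathcal{V}_H(N)$, the same argument applies verbatim: by Remark \ref{CongruenceHeyting}, every congruence on the $\nabla$-algebra reduct automatically respects the Heyting implication, so the bijective correspondence of Theorem \ref{FilWithConCor} still identifies $\Theta(\mathcal{A})$ (in the extended signature) with $\mathcal{M}(\mathcal{A})$, and the reduction to the ordering on non-trivial modal filters goes through unchanged. No step in the argument is genuinely delicate; the only point that requires care is keeping straight that ``non-trivial'' for congruences corresponds to ``different from $\{1\}$'' for modal filters, which is exactly what Remark \ref{EquivalenceBetweenTrivialities} supplies.
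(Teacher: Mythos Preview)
Your proof is correct and follows essentially the same approach as the paper: you use the order-isomorphism between congruences and modal filters from Theorem~\ref{FilWithConCor} together with Remark~\ref{EquivalenceBetweenTrivialities} to reduce subdirect irreducibility to the existence of a least non-trivial modal filter, and then identify the displayed condition with $x \in m(y)$ via Lemma~\ref{GenModFil}. The two directions are argued exactly as in the paper (picking $x$ from the minimum filter in one direction, and showing $m(x)$ is the minimum in the other), and the Heyting case is handled identically via Remark~\ref{CongruenceHeyting}.
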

\begin{proof}
For the first part, by Theorem \ref{FilWithConCor} and Remark \ref{EquivalenceBetweenTrivialities}, it is enough to prove the equivalence between the existence of the minimum modal filter satisfying $F \neq \{1\}$ and the existence of an element $x \in A-\{1\}$ as presented. First, assume that such an $x$ exists. Since $x \neq 1$ and $x \in m(x)$, then $m(x) \neq \{1\}$. Therefore, it is enough to show that $m(x) \subseteq F$, for any $F \neq \{1\}$. Since, $F \neq \{1\}$, there is $y \in F$ such that $y \neq 1$. By the condition, there exists a finite set $I$ such that for any $i \in I$, there are $m_i, n_i \in \mathbb{N}$ satisfying $\bigwedge_{i \in I} \nabla^{m_i} \Box^{n_i} y \leq x$. Since, $F$ is a modal filter and $y \in F$, we have $x \in F$. Therefore, as $m(x)$ is the least modal filter containing $x$, we reach $m(x) \subseteq F$. Conversely, assume that the family of the modal filters different from $\{1\}$ has the minimum element. Denote this element by $F$. Then, since $F \neq \{1\}$, there exists $x \in F$ such that $x \neq 1$. Let $y$ be any arbitrary element in $A - \{1\}$. Then, as $y \in m(y)$, we have $m(y) \neq \{1\}$ which by the minimality of $F$ implies $F \subseteq m(y)$. Since $x \in F$, we have $x \in m(y)$. Therefore, by Lemma \ref{GenModFil}, there exists a finite set $I$ such that for any $i \in I$, there are $m_i, n_i \in \mathbb{N}$ satisfying $\bigwedge_{i \in I} \nabla^{m_i} \Box^{n_i} y \leq x$. This completes the proof of the first part. For the second part, use the first part and Remark \ref{CongruenceHeyting}.
\end{proof}

\begin{corollary}\label{SubDirLeft}
\begin{description}
\item[$(i)$]
A non-trivial normal distributive left $\nabla$-algebra $\mathcal{A}$ is subdirectly irreducible iff there exists $x \in A-\{1\}$ such that for any $y \in A -\{1\}$, there exists $k \in \mathbb{N}$ satisfying $\nabla^{k} y \leq x$. The same also holds for any $\mathcal{A} \in \mathcal{V}_H(N, L)$.
\item[$(ii)$]
A non-trivial normal distributive right $\nabla$-algebra $\mathcal{A}$ is subdirectly irreducible iff there exists $x \in A-\{1\}$ such that for any $y \in A -\{1\}$, there exists $k \in \mathbb{N}$ satisfying $\Box^{k} y \leq x$. The same also holds for any $\mathcal{A} \in \mathcal{V}_H(N, R)$.
\end{description}
\end{corollary}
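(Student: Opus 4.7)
The plan is to derive both parts from Corollary \ref{SubIre} by collapsing the meet $\bigwedge_{i \in I} \nabla^{m_i}\Box^{n_i} y$ appearing in its characterization into a single expression of the form $\nabla^k y$ in the left case and $\Box^k y$ in the right case. One direction of each equivalence is immediate: the inequality $\nabla^k y \leq x$ is just the special instance of Corollary \ref{SubIre} obtained by setting $|I|=1$, $m_1=k$, $n_1=0$, and symmetrically for $\Box^k y \leq x$. The real work is in the other direction, i.e., simplifying the mixed meet to a single term.

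For part (i), I would use Theorem \ref{Left} to extract the two equivalent forms of the left hypothesis, $\nabla a \leq a$ and $a \leq \Box a$, for every $a$. Iterating the second inequality gives $\Box^{n_i} y \geq y$, and applying the monotonicity of $\nabla$ (Theorem \ref{BasicPropOfNablaAlg}(i)) iteratively yields $\nabla^{m_i}\Box^{n_i} y \geq \nabla^{m_i} y$. Iterating the first inequality shows that $\nabla^k y$ is decreasing in $k$, so putting $M = \max_{i \in I} m_i$ one obtains $\nabla^M y \leq \nabla^{m_i} y \leq \nabla^{m_i}\Box^{n_i} y$ for each $i \in I$. Taking the meet and composing with the original inequality gives $\nabla^M y \leq \bigwedge_{i \in I} \nabla^{m_i}\Box^{n_i} y \leq x$, so $k = M$ fulfills the required condition.

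Part (ii) is the mirror argument using Theorem \ref{Right}, which supplies $a \leq \nabla a$ and $\Box a \leq a$. Iterating gives $\nabla^{m_i} z \geq z$ for $z = \Box^{n_i} y$, while $\Box^k y$ is decreasing in $k$; taking $N = \max_{i \in I} n_i$ yields $\Box^N y \leq \Box^{n_i} y \leq \nabla^{m_i}\Box^{n_i} y$, hence $\Box^N y \leq x$. In both parts one should first discard the degenerate case $I = \varnothing$, but this is automatic since the empty meet equals $1$ and we assumed $x \neq 1$.

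The Heyting versions for $\mathcal{V}_H(N, L)$ and $\mathcal{V}_H(N, R)$ require no new ideas: Remark \ref{CongruenceHeyting} ensures that the filter--congruence correspondence underlying Corollary \ref{SubIre} is still intact in the explicitly Heyting signature, so the same argument applies verbatim. Overall the proof is essentially bookkeeping with the monotonicity of $\nabla$ and $\Box$, and the only mildly subtle step is the observation that the whole meet can be replaced by the single term indexed by the maximum exponent.
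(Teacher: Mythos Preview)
Your proposal is correct and follows essentially the same route as the paper: both arguments reduce to Corollary~\ref{SubIre}, use Theorem~\ref{Left} (resp.\ Theorem~\ref{Right}) to extract $y\le\Box y$ and the fact that $\nabla^k y$ (resp.\ $\Box^k y$) is decreasing in $k$, and then bound the whole meet by the single term indexed by the maximum exponent. Your explicit treatment of the case $I=\varnothing$ and of the easy direction is a harmless elaboration of details the paper leaves implicit.
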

\begin{proof}
For $(i)$, by Theorem \ref{Left}, we have $z \leq \Box z$ and $\nabla^{l+1} z \leq \nabla^l z$, for any $z \in A$. Therefore, for any finite set $I$, any $m_i, n_i \in \mathbb{N}$ and any $y \in A$, we have $\nabla^k y \leq \bigwedge_{i \in I} \nabla^{m_i} \Box^{n_i} y$, where $k$ is the maximum of all $m_i$'s. Using this observation, it is easy to see that the condition in Corollary \ref{SubIre} is equivalent to the existence of $x \neq 1$ such that for any $y \neq 1$, there exists a number $k \in \mathbb{N}$ satisfying $\nabla^{k} y \leq x$. 
For $(ii)$, use a similar argument, considering the fact that in any right $\nabla$-algebra, $z \leq \nabla z$ and $\Box^{l+1} z \leq \Box^l z$, for any $z \in A$, as proved in Theorem \ref{Right}.
\end{proof}

\begin{corollary}
A non-trivial Heyting algebra $\mathcal{A}$ is subdirectly irreducible iff there exists $x \in A-\{1\}$ such that $ y \leq x$, for any $y \in A -\{1\}$. 
\end{corollary}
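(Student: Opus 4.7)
The plan is to derive this as an immediate specialization of Corollary \ref{SubDirLeft}. Any Heyting algebra $\mathcal{A}$ with Heyting implication $\supset$ can be regarded as a normal distributive explicitly Heyting $\nabla$-algebra by setting $\nabla = \mathrm{id}$ and $\to = \supset$; indeed, the defining adjunction $\nabla(-) \wedge a \dashv a \to (-)$ becomes the familiar Heyting adjunction $(-) \wedge a \dashv a \supset (-)$, and $\nabla = \mathrm{id}$ is both order-preserving and commutes with all finite meets, so normality holds. Moreover, this $\nabla$-algebra is simultaneously left and right, since $\nabla a = a$.

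Next I would verify that the notion of subdirect irreducibility is unaffected by this change of signature. A congruence of $\mathcal{A}$ as a Heyting algebra is an equivalence relation respecting $\wedge$, $\vee$, $0$, $1$, and $\supset$; it automatically respects $\nabla = \mathrm{id}$ and $\to = \supset$, so it is a congruence of $\mathcal{A}$ viewed as a member of $\mathcal{V}_H(N, L)$. The converse is immediate. Hence the poset of congruences is the same in both settings, and $\mathcal{A}$ is subdirectly irreducible as a Heyting algebra iff it is subdirectly irreducible as a normal distributive left Heyting $\nabla$-algebra.

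Finally, I would apply Corollary \ref{SubDirLeft}(i) and observe that since $\nabla = \mathrm{id}$, we have $\nabla^{k} y = y$ for every $k \in \mathbb{N}$, so the condition ``there exists $k \in \mathbb{N}$ with $\nabla^{k} y \leq x$'' collapses to simply $y \leq x$. This yields the stated characterization. There is no real obstacle here: the heavy lifting — the modal filter/congruence correspondence of Theorem \ref{FilWithConCor} and the generation description of Lemma \ref{GenModFil} — is already done, and the only point requiring care is the sanity check that Heyting-algebra congruences match explicitly-Heyting-$\nabla$-algebra congruences when $\nabla$ is the identity, which is transparent.
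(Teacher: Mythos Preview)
Your proposal is correct and follows essentially the same approach as the paper: view the Heyting algebra as a normal distributive $\nabla$-algebra with $\nabla=\mathrm{id}$ (hence $\Box=\mathrm{id}$), and then specialize an earlier corollary so that all the $\nabla^{m}\Box^{n}$ terms collapse to the identity. The only cosmetic difference is that the paper invokes Corollary~\ref{SubIre} directly rather than the left-case Corollary~\ref{SubDirLeft}(i), and does not spell out the congruence-matching check you include; that check is a nice bit of hygiene but is not needed beyond what the paper implicitly assumes.
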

\begin{proof}
A Heyting algebra is a normal distributive $\nabla$-algebra with $\nabla=\Box=id$. Now, apply Corollary \ref{SubIre}.
\end{proof}

\begin{corollary}\label{Simple}
A normal distributive $\nabla$-algebra $\mathcal{A}$ is simple iff for any $x \in A-\{1\}$, there exists a finite set $I$ such that for any $i \in I$, there are $m_i, n_i \in \mathbb{N}$ satisfying $\bigwedge_{i \in I} \nabla^{m_i} \Box^{n_i} x=0$. The same also holds for any $\mathcal{A} \in \mathcal{V}_H(N)$.
\end{corollary}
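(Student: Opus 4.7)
The plan is to mirror the argument used in Corollary \ref{SubIre}, now asking not for the existence of a minimum non-trivial modal filter but rather for the absence of any modal filter strictly between $\{1\}$ and $A$. By Theorem \ref{FilWithConCor} together with Remark \ref{EquivalenceBetweenTrivialities}, the correspondence $F \mapsto \alpha(F)$ is an order-isomorphism sending $\{1\}$ to the identity congruence and $A$ to the total congruence. Hence $\mathcal{A}$ is simple precisely when its only modal filters are $\{1\}$ and $A$. So I would reduce the corollary entirely to this statement about modal filters.

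Next I would reformulate that reduction in terms of principal modal filters. If every modal filter is $\{1\}$ or $A$, then for any $x \neq 1$ the modal filter $m(x)$ is a modal filter containing $x \neq 1$, so $m(x) = A$; conversely, if $m(x) = A$ for every $x \in A \setminus \{1\}$ and $F$ is any modal filter with $F \neq \{1\}$, then picking some $x \in F \setminus \{1\}$ gives $A = m(x) \subseteq F$, so $F = A$. Thus simplicity is equivalent to $m(x) = A$ for every $x \in A \setminus \{1\}$, which in turn is equivalent to $0 \in m(x)$ for every such $x$ (since $A$ is precisely the modal filter containing $0$).

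Now I would invoke the explicit description in Lemma \ref{GenModFil} applied to the singleton $\{x\}$: the condition $0 \in m(x)$ unfolds to the existence of a finite set $I$ and, for each $i \in I$, naturals $m_i, n_i$ with $\bigwedge_{i \in I} \nabla^{m_i} \Box^{n_i} x \leq 0$, which, since $0$ is the bottom element, is the equality $\bigwedge_{i \in I} \nabla^{m_i} \Box^{n_i} x = 0$ claimed in the corollary. Chaining these equivalences yields the stated characterization. The Heyting version follows by the same argument because, by Remark \ref{CongruenceHeyting}, every congruence of a normal distributive Heyting $\nabla$-algebra automatically respects $\supset$, so Theorem \ref{FilWithConCor} still provides the bijection between congruences and modal filters in $\mathcal{V}_H(N)$.

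There is really no hard step here beyond carefully invoking Theorem \ref{FilWithConCor} and Lemma \ref{GenModFil}; the only subtle point, which mirrors the corresponding subtlety in the proof of Corollary \ref{SubIre}, is making sure the trivial-filter/trivial-congruence dictionary correctly identifies \emph{simplicity} with the \emph{non-existence} of filters between $\{1\}$ and $A$ rather than with the existence of a smallest non-trivial one, so that the quantifier becomes ``for every $x \neq 1$'' rather than ``there exists $x \neq 1$'' as in the subdirectly irreducible case.
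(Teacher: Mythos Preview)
Your proposal is correct and follows essentially the same approach as the paper: both reduce simplicity via Theorem \ref{FilWithConCor} and Remark \ref{EquivalenceBetweenTrivialities} to the non-existence of non-trivial modal filters, then use Lemma \ref{GenModFil} to translate $0 \in m(x)$ into the stated condition, and handle the Heyting case by Remark \ref{CongruenceHeyting}. Your intermediate reformulation ``$m(x)=A$ for every $x\neq 1$'' is just a slightly more explicit phrasing of the same two directions the paper proves.
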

\begin{proof}
For the first part, by Theorem \ref{FilWithConCor} and Remark \ref{EquivalenceBetweenTrivialities}, it is enough to show that the condition in the statement of the corollary is equivalent to the non-existence of non-trivial modal filters. First, assume the condition and suppose that $F \neq \{1\}$ is an arbitrary modal filter. We have to show that $F= A$. Since $F \neq \{1\}$, there is $x \in F$ such that $x \neq 1$. By the condition, there exists a finite set $I$ such that for any $i \in I$, there are $m_i, n_i \in \mathbb{N}$ satisfying $\bigwedge_{i \in I} \nabla^{m_i} \Box^{n_i} x =0$. Since $x \in F$, we have $0 \in F$ and hence $F=A$. Conversely, assume that $\mathcal{A}$ has no non-trivial modal filter. Let $x \neq 1$. As $x \in m(x)$, we have $m(x) \neq \{1\}$. Therefore, $m(x)=A$, which implies $0 \in m(x)$. By Lemma \ref{GenModFil}, there exists a finite set $I$ such that for any $i \in I$, there are $m_i, n_i \in \mathbb{N}$ satisfying $\bigwedge_{i \in I} \nabla^{m_i} \Box^{n_i} x=0$. This completes the proof of the first part. For the second part, use the first part and Remark \ref{CongruenceHeyting}.
\end{proof}

\begin{corollary}\label{SimpleForLeftRight}
\begin{description}
\item[$(i)$]
A normal distributive left $\nabla$-algebra $\mathcal{A}$ is simple iff for any $x \in A -\{1\}$, there exists $k \in \mathbb{N}$ satisfying $\nabla^{k} x=0$. The same also holds for any $\mathcal{A} \in \mathcal{V}_H(N, L)$.
\item[$(ii)$]
A normal distributive right $\nabla$-algebra $\mathcal{A}$ is simple iff for any $x \in A -\{1\}$, there exists $k \in \mathbb{N}$ satisfying $\Box^{k} x=0$. The same also holds for any $\mathcal{A} \in \mathcal{V}_H(N, R)$.
\end{description}
\end{corollary}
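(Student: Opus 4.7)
The plan is to deduce Corollary \ref{SimpleForLeftRight} from the general characterization given in Corollary \ref{Simple} by exploiting the monotonicity and absorption properties of $\nabla$ and $\Box$ available in the left and right settings respectively, in complete parallel with how Corollary \ref{SubDirLeft} was obtained from Corollary \ref{SubIre}. Thus I do not need to revisit the modal-filter machinery; I just need to show that the ``finite meet of mixed $\nabla^{m_i}\Box^{n_i}$ terms'' condition collapses to a single $\nabla^{k}$ (resp.\ $\Box^{k}$) in the left (resp.\ right) case. The second part of each clause, concerning $\mathcal{V}_H(N,L)$ and $\mathcal{V}_H(N,R)$, is then automatic from Remark \ref{CongruenceHeyting} as used in Corollary \ref{Simple}.

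For part $(i)$, I would start by recalling from Theorem \ref{Left} that left $\nabla$-algebras satisfy $\nabla z\leq z$ and $z\leq \Box z$ for every $z$. Iterating, $\Box^{n_i} y\geq y$ and $\nabla^{m_i} y$ is a decreasing sequence in $m_i$. Using monotonicity of $\nabla$, this gives the chain
\[
\nabla^{m_i}\Box^{n_i} y\;\geq\;\nabla^{m_i} y\;\geq\;\nabla^{k} y,
\]
where $k:=\max_{i\in I} m_i$. Taking the meet over a finite index set $I$ then yields $\nabla^{k} y\leq \bigwedge_{i\in I}\nabla^{m_i}\Box^{n_i} y$. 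Applied to $y=x$, this shows that whenever Corollary \ref{Simple}'s condition furnishes a finite set $I$ with $\bigwedge_{i\in I}\nabla^{m_i}\Box^{n_i} x=0$, we already have $\nabla^{k} x=0$. The converse is immediate: if $\nabla^{k} x=0$, take $I$ to be a singleton with $m=k$ and $n=0$, which trivially realizes the condition of Corollary \ref{Simple}.

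For part $(ii)$, I would argue dually using Theorem \ref{Right}: in a right $\nabla$-algebra, $z\leq \nabla z$ and $\Box z\leq z$, so $\nabla^{m_i} w\geq w$ for any $w$ and $\Box^{n_i} y$ is decreasing in $n_i$. Setting $w=\Box^{n_i} y$ gives
\[
\nabla^{m_i}\Box^{n_i} y\;\geq\;\Box^{n_i} y\;\geq\;\Box^{k} y
\]
for $k:=\max_{i\in I} n_i$, whence $\Box^{k} y\leq \bigwedge_{i\in I}\nabla^{m_i}\Box^{n_i} y$. The same reduction then turns the condition in Corollary \ref{Simple} into the existence of $k$ with $\Box^{k} x=0$, and the reverse direction is trivial by taking the singleton $I$ with $m=0$ and $n=k$.

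I do not anticipate any genuine obstacle; the only point worth flagging is to handle $I$ being a singleton cleanly in the converse directions so that the argument works uniformly regardless of whether the chosen $m_i$ or $n_i$ are zero. The Heyting variants in both $\mathcal{V}_H(N,L)$ and $\mathcal{V}_H(N,R)$ follow by exactly the same computation together with Remark \ref{CongruenceHeyting}, so no separate argument is required.
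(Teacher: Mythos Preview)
Your proposal is correct and follows essentially the same approach as the paper: reduce Corollary~\ref{Simple} to a single power of $\nabla$ (resp.\ $\Box$) by using the inequalities $z\leq\Box z$, $\nabla^{l+1}z\leq\nabla^{l}z$ from Theorem~\ref{Left} (resp.\ $z\leq\nabla z$, $\Box^{l+1}z\leq\Box^{l}z$ from Theorem~\ref{Right}), exactly in parallel with the derivation of Corollary~\ref{SubDirLeft} from Corollary~\ref{SubIre}. The paper's proof is terser but identical in content, and your handling of the Heyting variants via Remark~\ref{CongruenceHeyting} matches as well.
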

\begin{proof}
For $(i)$, again use the facts $z \leq \Box z$ and $\nabla^{l+1} z \leq \nabla^l z$, for any $z \in A$, proved in Theorem \ref{Left}. For $(ii)$, use $z \leq \nabla z$ and $\Box^{l+1} z \leq \Box^l z$, for any $z \in A$, proved in Theorem \ref{Right}.
\end{proof}

The following characterization of simple Heyting algebras is a well-known fact and it can be obtained as a particular case of Corollary \ref{SimpleForLeftRight}.

\begin{corollary}
A Heyting algebra $\mathcal{A}$ is simple iff for any $x \in A$ either $x=1$ or $x=0$.
\end{corollary}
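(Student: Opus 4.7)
The plan is to recognize that a Heyting algebra, viewed as a $\nabla$-algebra via $\nabla = \Box = \mathrm{id}$, fits directly into the framework of Corollary \ref{SimpleForLeftRight}, so the result will follow by specialization.

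First I would verify that any Heyting algebra $\mathcal{A}$, equipped with $\nabla a = a$ and $a \to b = a \supset b$, is a normal distributive $\nabla$-algebra: distributivity holds in every Heyting algebra, and $\nabla = \mathrm{id}$ trivially satisfies $\nabla 1 = 1$ and $\nabla(a \wedge b) = \nabla a \wedge \nabla b$. Moreover, since $\nabla a = a$, the inequality $\nabla a \leq a$ is satisfied with equality, so $\mathcal{A}$ is a normal distributive left $\nabla$-algebra (in fact, also right).

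Next I would apply Corollary \ref{SimpleForLeftRight}(i), which tells us that $\mathcal{A}$ is simple iff for every $x \in A - \{1\}$ there exists $k \in \mathbb{N}$ with $\nabla^{k} x = 0$. Since $\nabla = \mathrm{id}$, we have $\nabla^{k} x = x$ for every $k \in \mathbb{N}$, so the condition reduces to: for every $x \in A - \{1\}$, $x = 0$. This is equivalent to saying that every $x \in A$ satisfies $x = 1$ or $x = 0$, which gives the claim.

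There is no real obstacle here; the only thing to double-check is that the $\nabla$-algebra morphisms of this particular $\nabla$-algebra coincide with the Heyting algebra morphisms (which they do, since $\nabla = \mathrm{id}$ is automatically preserved and $a \to b = a \supset b$), so that the notion of congruence — and hence of simplicity — transferred between the two signatures matches. With that observation, the corollary is an immediate specialization.
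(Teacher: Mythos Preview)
Your proposal is correct and follows exactly the approach the paper indicates: specialize Corollary~\ref{SimpleForLeftRight} to the case $\nabla=\Box=\mathrm{id}$, so that the condition $\nabla^k x=0$ collapses to $x=0$. Your extra remark about congruences matching across the two signatures is a welcome sanity check that the paper leaves implicit.
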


The following theorem provides an infinite family of finite simple normal Heyting $\nabla$-algebras to witness the relative complexity of the varieties of $\nabla$-algebras compared to the variety of Heyting algebras, where there is only one non-trivial simple element.
\begin{theorem}
There are infinitely many finite simple normal Heyting $\nabla$-algebras.
\end{theorem}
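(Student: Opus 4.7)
The plan is to exhibit, for each integer $n \geq 1$, a finite simple normal Heyting $\nabla$-algebra $\mathcal{A}_n$ of cardinality $n+2$. Since these cardinalities are pairwise distinct, the algebras are pairwise non-isomorphic, yielding the required infinite family.

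For each $n \geq 1$, I would take the finite chain $C_n = \{0, 1, \ldots, n\}$ with its natural order, endowed with the Alexandrov topology so that $\mathcal{O}(C_n)$ consists of $\emptyset$ together with the upsets $\uparrow\! k = \{k, k+1, \ldots, n\}$ for $0 \leq k \leq n$ (a chain of $n+2$ elements). Let $f_n \colon C_n \to C_n$ be defined by $f_n(0) = 0$ and $f_n(k) = k-1$ for $k \geq 1$. Since $f_n$ is order-preserving, it is continuous, so $(C_n, f_n)$ is a dynamic topological system and, by Example \ref{TopologicalSpace}, the structure $\mathcal{A}_n := (\mathcal{O}(C_n), f_n^{-1}, \to_{f_n})$ is a normal Heyting $\nabla$-algebra. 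Being a finite distributive lattice, its underlying lattice automatically carries a Heyting implication, so $\mathcal{A}_n$ fits the statement of the theorem.

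To verify simplicity, I would first compute $\nabla = f_n^{-1}$ explicitly on the opens: for $k \geq 1$, $\nabla(\uparrow\! k) = \{x : f_n(x) \geq k\} = \,\uparrow\!(k+1)$, while $\nabla(C_n) = C_n$ and $\nabla(\emptyset) = \emptyset$. In particular $\nabla U \subseteq U$ for every $U \in \mathcal{O}(C_n)$, so $\mathcal{A}_n$ is a \emph{left} $\nabla$-algebra. By iteration, $\nabla^{\,n-k+1}(\uparrow\! k) = \,\uparrow\!(n+1) = \emptyset$ for each $1 \leq k \leq n$, and $\nabla^{0}(\emptyset) = \emptyset$. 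Thus for every non-top element $U$ of $\mathcal{A}_n$ there exists $j \in \mathbb{N}$ with $\nabla^{j} U = 0$, and Corollary \ref{SimpleForLeftRight}(i) immediately yields that $\mathcal{A}_n$ is simple.

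No serious obstacle arises in this argument; the only care point is to cite Example \ref{TopologicalSpace} correctly, noting that the normality of the resulting $\nabla$-algebra uses that $f_n^{-1}$ preserves finite intersections, and the existence of $\to_{f_n}$ via a right adjoint $f_{n*}$ to $f_n^{-1}$ is guaranteed by Theorem \ref{AdjointFunctorTheorem}, with completeness provided by the finiteness of $C_n$. The construction then delivers an infinite sequence $\mathcal{A}_1, \mathcal{A}_2, \ldots$ of pairwise non-isomorphic finite simple normal Heyting $\nabla$-algebras, which proves the theorem.
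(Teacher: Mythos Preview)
Your proof is correct and follows the same overall strategy as the paper: build, via Example~\ref{TopologicalSpace}, a family of finite normal Heyting $\nabla$-algebras from dynamic topological systems, verify simplicity by showing that some power of $\nabla$ annihilates every non-top element, and distinguish the algebras by cardinality. The specific constructions differ, however. The paper takes $X_n=\{1,\dots,n\}\cup\{\omega\}$ with the topology whose opens are $X_n$ together with all subsets of $\{1,\dots,n\}$, and the map $x\mapsto x+1$ (sending $n,\omega$ to $\omega$); its algebra has $2^n+1$ elements, is neither left nor right, and simplicity is concluded from the general Corollary~\ref{Simple} after showing $\nabla^n U=\varnothing$ for $U\neq X_n$. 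Your chain construction is more economical: the resulting algebras are totally ordered with $n+2$ elements and are \emph{left}, so the streamlined criterion of Corollary~\ref{SimpleForLeftRight}(i) applies directly. Both routes are valid; yours yields smaller and structurally simpler witnesses, while the paper's examples illustrate that simplicity does not require the left (or right) condition.
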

\begin{proof}
Let $n \geq 1$ be a natural number. Define the topological space $X_n=\{1, 2, \cdots, n\} \cup \{\omega\}$ with the following topology: A subset $U \subseteq X_n$ is open if either $U=X_n$ or $U \subseteq \{1, 2, \cdots, n\}$. Define $f_n: X_n \to X_n$ as the function mapping $x \neq n, \omega$ to $x+1$ and $f_n(n)=f_n(\omega)=\omega$. This function is clearly continuous as $\omega \notin f_n^{-1}(U)$, for any $U \subseteq \{0, 1, \cdots, n\}$. Now, consider the normal Heyting $\nabla$-algebra $\mathcal{A}_n$ associated to the dynamic topological system $(X_n, f_n)$ as explained in Example \ref{TopologicalSpace} and recall that in that $\nabla$-algebra $\nabla U=f^{-1}(U)$, for any open $U \subseteq X_n$. We claim that $\mathcal{A}_n$ is simple. For that purpose, we show
$\nabla^n U=\varnothing$, for any $U \neq X_n$. Assume $U \neq X_n$. Hence, $\omega \notin U$. 
It is easy to see that $\nabla^m U =\{x \in \{1, 2, \cdots, n\} \mid x+m \in U\}$, for any $m \leq n$. Therefore, for $m=n$, we have $\nabla^n U=\varnothing$.
Hence, by Corollary \ref{Simple}, the normal distributive $\nabla$-algebra $\mathcal{A}_n$ is simple. Finally, note that the $\mathcal{A}_n$'s are different as their base lattices have different cardinalities.
\end{proof}

\section{Dedekind-MacNeille Completion}\label{Dedekind}
The Dedekind-MacNeille completion of a poset is its smallest complete extension. 
In this section, we will show that starting from a $\nabla$-algebra rather than a poset, there is a unique way to make its  Dedekind-MacNeille completion a $\nabla$-algebra. In this sense, one can say that the variety of $\nabla$-algebras is closed under the Dedekind-MacNeille completion. This closure will be useful later in the sequel of the present paper to prove a completeness theorem showing that certain logics are complete for certain families of \emph{complete} $\nabla$-algebras. The completion is also the main tool to prove the completeness of the algebraic semantics for the predicate version of the mentioned logics. Note that the closure under the Dedekind-MacNeille completion is a rare property for a variety to have. For instance, the variety of bounded distributive lattices lacks the property and between all the varieties of Heyting algebras only the three varieties of all Heyting algebras, all boolean algebras and the trivial variety has the property \cite{HardBezh}. In this sense, the closure of many varieties of $\nabla$-algebras under the Dedekind-MacNeille completion can be seen as a mathematical evidence to show how well-behaved the notion of a $\nabla$-algebra is. 

\begin{definition}
Let $(P, \leq)$ be a poset and $S \subseteq P$. By $U(S)=\{x \in P \mid \forall y \in S, \; y \leq x\}$ and $L(S)=\{x \in P \mid \forall y \in S, \; x \leq y\}$, we mean the set of all upper bounds and all lower bounds of $S$, respectively. A subset $N \subseteq P$ is called \emph{normal} iff $LU(N)=N$. Any set in the form $(x]=\{y \in P \mid y \leq x\}$ is clearly normal. The Dedekind-MacNeille completion of $(P, \leq)$, denoted by $\mathcal{N}(P, \leq)$, is the poset of all normal subsets of $P$ ordered by inclusion. By the canonical map of the completion, we mean $i: (P, \leq) \to \mathcal{N}(P, \leq)$ defined by $i(x)=(x]$.
\end{definition}

It is well-known that the poset of all normal subsets is actually a complete lattice with intersection as the meet and  $\bigvee_{i \in I}N_i=LU(\bigcup_{i \in I} N_i)$ as the join. Moreover, the canonical map $i$ is an order embedding that preserves all the existing meets and joins in $(P, \leq)$ and if the poset $(P, \leq)$ is also a Heyting algebra, it also preserves the Heyting implication, see Subsection A.6 in \cite{Esakia}. It is also easy to see that if $\mathcal{A}$ is a bounded lattice itself, any normal subset is actually an ideal. Therefore, as we work exclusively with $\nabla$-algebras which are also bounded lattices, we use the term normal ideals for the normal subsets. 
The following well-known lemma is a nice characterization of these normal ideals. We will provide its simple proof for the sake of completeness.

\begin{lemma}\label{MeetDensity}
A subset $N$ is normal iff $N$ is representable as an intersection of principal ideals, i.e., the ideals in the form $(x]$, for some $x \in A$. 
\end{lemma}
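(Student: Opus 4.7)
The plan is to use the standard Galois connection between $U$ and $L$, together with the obvious observation that a principal ideal $(x]$ is precisely $L(\{x\})$, and more generally that for any subset $T \subseteq A$ we have $L(T) = \bigcap_{x \in T}(x]$. I would split the proof into the two implications, each essentially a one-line manipulation.

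For the forward direction, assume $N$ is normal, i.e., $LU(N) = N$. I would produce the explicit representation
\[
N \;=\; \bigcap_{x \in U(N)} (x].
\]
The inclusion $N \subseteq \bigcap_{x \in U(N)}(x]$ is immediate because every element of $N$ sits below every upper bound of $N$. The converse inclusion is exactly $L(U(N)) \subseteq N$, which is the nontrivial half of the normality hypothesis.

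For the backward direction, assume $N = \bigcap_{i \in I}(x_i]$. I would check that each $x_i$ lies in $U(N)$, so that $L(\{x_i : i \in I\}) \supseteq LU(N)$ gives
\[
LU(N) \;\subseteq\; \bigcap_{i \in I}(x_i] \;=\; N,
\]
and combine with the general Galois-connection fact $N \subseteq LU(N)$ (which holds since every element of $N$ lies below every upper bound of $N$) to conclude $LU(N) = N$.

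There is no real obstacle here; the lemma is essentially a reformulation of the defining property of the closure operator $LU$ associated with the Galois connection $(U, L)$. The only thing to be mildly careful about is noting that the index set for the intersection is allowed to be arbitrary (possibly empty, in which case the intersection is all of $A$, matching the fact that $A$ itself is normal as $LU(A) = A$), so the statement causes no issue at the top element.
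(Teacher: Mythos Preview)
Your proof is correct and essentially identical to the paper's: both directions use the representation $N = \bigcap_{x \in U(N)} (x]$ for the forward implication, and for the converse both observe that each $x_i \in U(N)$ so that $LU(N) \subseteq \bigcap_i (x_i] = N$, combined with the general inclusion $N \subseteq LU(N)$. The only difference is phrasing (you invoke the Galois-connection language more explicitly), not substance.
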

\begin{proof}
If $N$ is normal, as $LU(N)=N$ and $LU(N)=\bigcap_{n \in U(N)} (n]$, the claim is clear. Conversely, assume $N=\bigcap_{i \in I} (n_i]$. We show $LU(N) \subseteq N$. The converse, i.e., $N \subseteq LU(N)$, always holds.
Since $N=\bigcap_{i \in I} (n_i]$, we have $n_i \in U(N)$, because if $x \in N$ then $x \leq n_i$. Now, assume $y \in LU(N)$. Since, $n_i \in U(N)$, we have $y \leq n_i$, for all $i \in I$. This implies $y \in \bigcap_{i \in I} (n_i]=N$. 
\end{proof}

Now, for any $\nabla$-algebra $\mathcal{A}$, define the operations $\nabla$ and $\to$ on its lattice of normal ideals, $\mathcal{N}(\mathsf{A})$, by
$\nabla N = \bigvee_{n \in N} (\nabla n]$ and  $M \to N = \{x \in A \mid \forall m \in M, \nabla x \wedge m \in N \}$.
In the rest of this section, we show that this pair of operations is the unique pair that makes the lattice $\mathcal{N}(\mathsf{A})$ into a $\nabla$-algebra and the lattice embedding $i$ into a $\nabla$-algebra morphism. The uniqueness is easy to prove. Let $\nabla'$ and $\to'$ be another such pair. It is enough to prove $\nabla=\nabla'$. As $\nabla'$ has a right adjoint, it must preserve all joins in $\mathcal{N}(\mathsf{A})$. Moreover, as $i$ is a $\nabla$-algebra morphism, we have $\nabla'(n]=(\nabla n]$. Therefore, 
\[
\nabla'N = \nabla' \bigvee_{n \in N} (n]= \bigvee_{n \in N} \nabla'(n]= \bigvee_{n \in N} (\nabla n] = \nabla N.
\]
This completes the proof of the uniqueness. For the rest, denote the tuple $(\mathcal{N}(\mathsf{A}), \nabla, \to)$ by $\mathcal{N}(\mathcal{A})$. Then:

\begin{theorem}\label{DedekindCompletionThm}
Let $C \subseteq \{H, N, R, L, Fa, Fu\}$. Then, $\mathcal{N}(\mathcal{A}) \in \mathcal{V}(C)$ and the canonical embedding $i: \mathcal{A} \to \mathcal{N}(\mathcal{A})$ is also a $\nabla$-algebra embedding, for any $ \mathcal{A} \in \mathcal{V}(C)$. If $H \in C$, the embedding is also a Heyting $\nabla$-algebra morphism. In this sense, $\mathcal{V}(C)$ is closed under the Dedekind-MacNeille completion.
\end{theorem}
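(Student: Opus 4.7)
I split the argument into three tasks: (a) verifying that $(\mathcal{N}(\mathsf{A}),\nabla,\to)$ is a $\nabla$-algebra, (b) checking that $i\colon\mathsf{A}\to\mathcal{N}(\mathsf{A})$ preserves $\nabla$, $\to$ and, when applicable, $\supset$, and (c) transferring each property in $C$ from $\mathcal{A}$ to $\mathcal{N}(\mathcal{A})$. Task (b) is a direct calculation: $\nabla(a]=\bigvee_{b\le a}(\nabla b]=(\nabla a]$ by definition, while by the adjunction in $\mathcal{A}$
\[
(a]\to(b]=\{x:\forall m\le a,\ \nabla x\wedge m\le b\}=\{x:\nabla x\wedge a\le b\}=(a\to b];
\]
preservation of the Heyting implication when $H\in C$ is the classical Dedekind--MacNeille fact of Subsection A.6 in \cite{Esakia}.

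For (a), I first check that $M\to K$ is $LU$-closed, which in a bounded lattice makes it a normal ideal. Given $z\in LU(M\to K)$, $m\in M$, and $u\in U(K)$, every $w\in M\to K$ has $\nabla w\wedge m\in K$ and hence $\nabla w\wedge m\le u$, so $w\le m\to u$ by the adjunction in $\mathcal{A}$; thus $m\to u\in U(M\to K)$, giving $z\le m\to u$, therefore $\nabla z\wedge m\le u$, and since $u$ was arbitrary $\nabla z\wedge m\in LU(K)=K$. The forward direction of the adjunction $\nabla N\cap M\subseteq K\Leftrightarrow N\subseteq M\to K$ is immediate since $\nabla n\wedge m\in\nabla N\cap M$ for $n\in N$, $m\in M$. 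For the reverse, assume $N\subseteq M\to K$ and take $y\in\nabla N\cap M$; if $v\in A$ bounds the set $\{\nabla n\wedge y:n\in N\}$, then by adjunction $n\le y\to v$ for each $n\in N$, so $\nabla(y\to v)\in U(\{\nabla n:n\in N\})$; combining $y\le\nabla(y\to v)$ (from $y\in\nabla N=LU\{\nabla n\}$) with Theorem~\ref{BasicPropOfNablaAlg}(iii) gives $y\le\nabla(y\to v)\wedge y\le v$. Since $\{\nabla n\wedge y\}_{n\in N}\subseteq K$ by hypothesis, this proves $y\in LU(\{\nabla n\wedge y:n\in N\})\subseteq K$.

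For (c), properties $R$ and $L$ reduce to pointwise conditions on principal ideals and extend by downward closure; $H$ is the classical statement on the completion of a Heyting algebra. For $Fa$, every $m\in M$ satisfies $m=\nabla\Box m$ with $\Box m\in\Box M$, so $M\subseteq\nabla\Box M$ and, together with the general inequality $\nabla\Box M\subseteq M$, yields $\nabla$ surjective. For $Fu$, take $x\in\Box\nabla N$ and any $v\in U(N)$; then $\nabla v\in U(\{\nabla n\})$, so $\nabla x\le\nabla v$, and injectivity of $\nabla$ on $\mathsf{A}$ (Theorem~\ref{Injectivity}) forces $x\le v$, hence $x\in LU(N)=N$. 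The principal obstacle is property $N$, where I must show $\nabla(M\cap N)=\nabla M\cap\nabla N$; the $\subseteq$ direction is monotonicity. Writing $M\cap N=\bigvee_{m\in M,n\in N}(m\wedge n]$, join-preservation of $\nabla$ together with normality of $\mathsf{A}$ gives $\nabla(M\cap N)=\bigvee_{m,n}(\nabla m\wedge\nabla n]$, so it suffices to prove that every $y\in\nabla M\cap\nabla N$ satisfies $y\le u$ whenever $u\in A$ bounds $\{\nabla m\wedge\nabla n:m\in M,n\in N\}$. From such $u$, the adjunction in $\mathcal{A}$ gives $m\le\nabla n\to u$, so $\nabla n\to u\in U(M)$ and thus $\nabla(\nabla n\to u)\in U(\{\nabla m\})$; hence $y\le\nabla(\nabla n\to u)$ and Theorem~\ref{BasicPropOfNablaAlg}(iii) yields $y\wedge\nabla n\le u$. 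Repeating the trick, $n\le y\to u$ for every $n\in N$, so $y\to u\in U(N)$, whence $\nabla(y\to u)\in U(\{\nabla n\})$, $y\le\nabla(y\to u)$, and one final application of Theorem~\ref{BasicPropOfNablaAlg}(iii) gives $y\le u$, as required.
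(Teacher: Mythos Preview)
Your proof is correct and follows essentially the same route as the paper's. The key nontrivial steps---the reverse direction of the adjunction and the preservation of normality $(N)$---use exactly the same ``adjoint trick'' (pass to $x\to u$, apply $\nabla$, and invoke membership in $LU\{\nabla n\}$) that the paper employs. The only cosmetic differences are that the paper verifies normality of $M\to K$ via Lemma~\ref{MeetDensity} (writing $K=\bigcap_i(k_i]$) rather than by a direct $LU$-closure argument, and handles $(L)$ and $(R)$ through the description $\Box N=\{x:\nabla x\in N\}$ together with Theorems~\ref{Left} and~\ref{Right}; you should also note explicitly that $\nabla A=A$ (the $\nabla 1=1$ half of normality), which follows immediately from $i$ preserving $\nabla$.
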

\begin{proof}
To show that the operations $\nabla$ and $\to$ are well-defined, the only thing to prove is the normality of $M \to N$, when $M$ and $N$ are both normal ideals. Since $N$ is normal, by Lemma \ref{MeetDensity}, $N=\bigcap_{i \in I} (n_i]$, for some $I$ and $n_i$'s. Therefore, using the definition, we know that $x \in M \to N$ iff for any $m \in M$ and $i \in I$, we have $\nabla x \wedge m \leq n_i$. Hence, $M \to N=\bigcap_{i \in I, m \in M} (m \to n_i]$ and by Lemma \ref{MeetDensity}, $M \to N$ is normal. 

Now, we show that $i : \mathcal{A} \to \mathcal{N}(\mathcal{A})$ preserves $\nabla$ and $\to$. For $\nabla$, by definition,
$\nabla (a]= \bigvee_{b \leq a} (\nabla b]$. Clearly, $(\nabla a]$ is one of the ideals we use join over. Hence, $(\nabla a] \subseteq \bigvee_{b \leq a} (\nabla b]$. On the other hand, $\nabla b \leq \nabla a$, for any $b \leq a$. Hence, $(\nabla b] \subseteq (\nabla a]$ which implies $\bigvee _{b \leq a} (\nabla b] \subseteq (\nabla a]$. Therefore, $\nabla (a]= \bigvee_{b \leq a} (\nabla b]= (\nabla a]$. For the implication, we have the following series of equivalences
\[
x \in (a] \to (b] \;\;\; \text{iff} \;\;\; \forall y \in (a], \nabla x \wedge y \in (b] \;\;\; \text{iff}
\]
\[
\nabla x \wedge a \leq b \;\;\; \text{iff} \;\;\; x \leq a \to b \;\;\; \text{iff} \;\;\; x \in (a \to b].
\]
Therefore, $(a] \to (b]=(a \to b]$. We now prove the adjunction condition of a $\nabla$-algebra, i.e., 
\[
\nabla M \cap N \subseteq K \;\;\; \text{iff} \;\;\; M \subseteq N \to K,
\]
for any normal ideals $M$, $N$ and $K$.
For the left to right direction, assume $\nabla M \cap N \subseteq K$ and $m \in M$. We have to show that $m \in N \to K$ which means that for any arbitrary $n \in N$, we must have $\nabla m \wedge n \in K$. For that purpose, note that $\nabla m \wedge n \in (\nabla m]=\nabla (m] \subseteq \nabla M$ and $\nabla m \wedge n \in N$, and since $\nabla M \cap N \subseteq K$, we have $\nabla m \wedge n \in K$.\\
For the right to left direction, assume that $M \subseteq N \to K$ and $x \in \nabla M \cap N$. Then, we have to show that $x \in K$. Since $K$ is normal, by Lemma \ref{MeetDensity}, it is representable as $\bigcap_{i \in I} (k_i]$. Therefore, it is enough to show that $x \leq k_i$, for any $i \in I$. First, since $M \subseteq N \to K$, for any $m \in M$ and $n \in N$, we have $\nabla m \wedge n \in K$ which implies $\nabla m \wedge n \leq k_i$, for any $i \in I$. Now, as $x \in N$, for any $m \in M$, we have $\nabla m \wedge x \leq k_i$. Hence, $m \leq x \to k_i$ which implies $\nabla m \leq \nabla (x \to k_i)$. Therefore, $\nabla (x \to k_i)$ is an upper bound for $\bigcup_{m \in M}(\nabla m]$. Hence, $\nabla (x \to k_i) \in U(\bigcup_{m \in M}(\nabla m])$. On the other hand, by definition, $x \in \nabla M=\bigvee_{m \in M} (\nabla m]=LU(\bigcup_{m \in M}(\nabla m])$ which implies $x \leq \nabla (x \to k_i)$. Therefore, $x \leq x \wedge \nabla (x \to k_i) \leq k_i$ which is what we wanted.

Finally, we have to check that the conditions $\{H, N, R, L, Fa, Fu\}$ are preserved under the Dedekind-MacNeille completion. The Heyting case and the fact that the embedding $i$ preserves the Heyting implication is well-known \cite{Esakia}. For $(N)$, we first prove $\nabla A=A$. For that purpose, as $\nabla 1=1$, we have $A \subseteq (\nabla 1]$. Now, since $\nabla A=\bigvee_{a \in A} (\nabla a]$, we have $A \subseteq \nabla A$. To prove that $\nabla$ commutes with the binary meet, it is enough to show that $\nabla M \cap \nabla N \subseteq \nabla (M \cap N)$, for any two normal ideals $M$ and $N$. The other direction always holds in a $\nabla$-algebra. Assume $x \in \nabla M \cap \nabla N$. Since $\nabla M=\bigvee_{m \in M} (\nabla m]=LU(\bigcup_{m \in M} (\nabla m])$, we know that ``\emph{for any $y$, if $y \geq \nabla m$, for all $m \in M$, we have $x \leq y$}". Call the property inside the quotation mark the $(*)$ property for $M$. The $(*)$ property is also true for $N$, because we also have $x \in \nabla N$. To prove $x \in \nabla (M \cap N)=LU(\bigcup_{k \in M \cap N} (\nabla k])$, we assume $z \in U(\bigcup_{k \in M \cap N} (\nabla k])$ and we show that $x \leq z$. Since $z \in U(\bigcup_{k \in M \cap N} (\nabla k])$, for any arbitrary $m \in M$ and $n \in N$, we have $z \geq \nabla (m \wedge n)$. Since $\mathcal{A}$ is normal, we have $z \geq \nabla m \wedge \nabla n$. Hence, $m \leq \nabla n \to z$ which implies $\nabla m \leq \nabla (\nabla n \to z)$. Therefore, by the $(*)$ property for $M$, we have $x \leq \nabla (\nabla n \to z)$. Thus, $ \nabla n \wedge x \leq \nabla n \wedge \nabla (\nabla n \to z) \leq z$. Since, $\nabla n \wedge x \leq z$, we have $n \leq x \to z$ and hence  $\nabla n \leq \nabla (x \to z)$. Therefore, by the $(*)$ property for $N$, we have $x \leq \nabla (x \to z)$ which implies $x \leq x \wedge \nabla(x \to z) \leq z$.\\
The cases for $(L)$ and $(R)$ are easy, using the explicit definition of $\Box$ as $\Box N=\{x \in A \mid \nabla x \in N\}$ and Theorem \ref{Left} and Theorem \ref{Right} that rephrase the leftness and rightness of a $\nabla$-algebra in terms of its $\Box$. For $(Fa)$, by Theorem \ref{Surjectivity}, it is enough to show $\nabla \Box N=N$. We only prove $N \subseteq \nabla \Box N$. The other direction is a consequence of the adjunction $\nabla \dashv \Box$. If $n \in N$, then by $(Fa)$ for $\mathcal{A}$, we have $n \leq \nabla \Box n$. Hence, $n \in (\nabla \Box n]=\nabla \Box (n] \subseteq \nabla \Box N$. For $(Fu)$, by Theorem \ref{Injectivity}, it is enough to show $\Box \nabla N=N$. We only prove $\Box \nabla N \subseteq N$. The other direction is again a consequence of the adjunction $\nabla \dashv \Box$. Assume that $x \in \Box \nabla N$. By definition, we have $\nabla x \in \nabla N$. To prove that $x \in LU(N)=N$, it is enough to pick an arbitrary $y \in U(N)$ and show that $x \leq y$. Since $y \in U(N)$,  for any $n \in N$, we have $y \geq n$ which implies  $\nabla y \geq \nabla n$ and hence $\nabla y \in U(\bigcup_{n \in N} (\nabla n])$. Since $\nabla x \in \nabla N=LU(\bigcup_{n \in N} (\nabla n])$, we have $\nabla x \leq \nabla y$. Since $\mathcal{A}$ satisfies $(Fu)$, by Theorem \ref{Injectivity}, the operation $\nabla$ is an order embedding and hence $x \leq y$. Therefore, $x \in LU(N)=N$.
\end{proof}

\section{Kripke Frames} \label{Kripke}

In this section, we will first recall a variant of intuitionistic Kripke frames, employed in \cite{Sim,Servi,LitViss} to capture different intuitionistic modal logics. This variant provides a natural family of $\nabla$-algebras, as explained in \cite{ImSpace} and Example \ref{KripkeFrames}. Using the usual prime filter construction, also employed in \cite{ImSpace}, it is not hard to represent different classes of distributive $\nabla$-algebras by their corresponding classes of Kripke frames. Here, we recall the prime filter construction to expand the characterization of \cite{ImSpace} to also cover the case of full and faithful distributive $\nabla$-algebras and prove the amalgamation property for some varieties of distributive normal $\nabla$-algebras. The machinery is also required for the Kripke and topological completeness theorems and the duality theory for some families of distributive $\nabla$-algebras planned to be covered in the sequel of the present paper.
\begin{definition}
Let $(W, \leq)$ be a poset. The tuple $(W, \leq, R)$ is called a \emph{Kripke frame} if $R$ is compatible with $\leq$, i.e., if $k'\leq k$, $(k, l) \in R$ and $l \leq l'$, then $(k', l') \in R$:
\[\small \begin{tikzcd}
	k && l \\
	\\
	{k'} && {l'}
	\arrow["\leq", from=3-1, to=1-1]
	\arrow["R", from=1-1, to=1-3]
	\arrow["\leq", from=1-3, to=3-3]
	\arrow["R"', dashed, from=3-1, to=3-3]
\end{tikzcd}\]
for any $k,l,k',l'\in W$. Moreover, 
\begin{description}
\item[$(N)$]
if there exists an order-preserving function $\pi : W \to W$, called  the \emph{normality witness}, such that $(x, y) \in R$ iff $x \leq \pi(y)$, then the Kripke frame is called \emph{normal},
\item[$(R)$]
if $R$ is reflexive or equivalently $\leq \, \subseteq R$, then the Kripke frame is called \emph{right},
\item[$(L)$]
if $R \; \subseteq \; \leq$, then the Kripke frame is called \emph{left},
\item[$(Fa)$]
if for any $x \in W$, there exists $y \in W$ such that $(y, x) \in R$ and for any $z \in W$ such that $(y, z) \in R$ we have $x \leq z$, then the Kripke frame is called \emph{faithful},
\[\small \begin{tikzcd}
	&&& z \\
	x \\
	&& y
	\arrow["R", from=3-3, to=2-1]
	\arrow["R"', from=3-3, to=1-4]
	\arrow["\leq", dashed, from=2-1, to=1-4]
\end{tikzcd}\]
\item[$(Fu)$]
if for any $x \in W$, there exists $y \in W$ such that $(x, y) \in R$ and for any $z \in W$ such that $(z, y) \in R$ we have $z \leq x$, then the Kripke frame is called \emph{full}.
\[\small \begin{tikzcd}
	& y \\
	&&& x \\
	z
	\arrow["R"', from=2-4, to=1-2]
	\arrow["R", from=3-1, to=1-2]
	\arrow["\leq"', dashed, from=3-1, to=2-4]
\end{tikzcd}\]
\end{description}
For any $C \subseteq \{N, R, L, Fa, Fu\}$, by $\mathbf{K}(C)$, we mean the class of all Kripke frames with the properties described in the set $C$. For instance, $\mathbf{K}(\{N, Fa\})$ is the class of all normal faithful Kripke frames. \\
If $\mathcal{K}=(W, \leq, R)$ and $\mathcal{K}'=(W', \leq', R')$ are two Kripke frames, then by a Kripke morphism $f: \mathcal{K} \to \mathcal{K}'$, we mean an order-preserving function from $W$ to $W'$ such that:
\begin{itemize}
\item[$\bullet$]
For any $k, l \in W$, if $(k, l) \in R$ then $(f(k), f(l)) \in R'$,
\item[$\bullet$]
for any $k \in W$ and any $l' \in W'$ such that $(f(k), l') \in R'$, there exists $l \in W$ such that $(k, l) \in R$ and $f(l)=l'$,
\item[$\bullet$]
for any $k \in W$ and any $l' \in W'$ such that $(l', f(k)) \in R'$, there exists $l \in W$ such that $(l, k) \in R$ and $f(l) \geq' l'$.
\end{itemize}
If we also have the following condition:
\begin{itemize}
\item[$\bullet$]
for any $k \in W$ and any $l' \in W'$ such that $f(k) \leq' l'$, there exists $l \in W$ such that $k \leq l$ and $f(l)=l'$,
\end{itemize}
then the Kripke morphism $f$ is called a \emph{Heyting  Kripke morphism}. Kripke frames and Kripke morphisms form a category that we loosely denote by its class of objects $\mathbf{K}(C)$. If we use the Heyting Kripke morphisms, instead, then we denote the subcategory by $\mathbf{K}^H(C)$. 
\end{definition}
If the Kripke frame is normal, we can rewrite the conditions $\{R, L, Fa, Fu\}$ purely in terms of the normality witness of the frame.
We will need these characterizations to prove the amalgamation property for some varieties of normal $\nabla$-algebras in Subsection \ref{SectionAmalgamation}. They are also required in the duality theory planned to be covered in the sequel of the present paper.
\begin{lemma}\label{NormalForConditions}
Let $\mathcal{K}=(W, \leq, R)$ be a normal Kripke frame with the normality witness $\pi$. Then: 
\begin{description}
\item[$(i)$]
$(R)$ is equivalent to the condition that $ w \leq \pi(w)$, for any $w \in W$,
\item[$(ii)$]
$(L)$ is equivalent to the condition that $\pi(w) \leq w$, for any $w \in W$,
\item[$(iii)$]
$(Fa)$ is equivalent to the condition that $\pi$ is an order embedding, i.e., if $\pi (u) \leq \pi(v)$ then $u \leq v$, for any $u, v \in W$,
\item[$(iv)$]
$(Fu)$ is equivalent to the surjectivity of $\pi$.
\end{description}
\end{lemma}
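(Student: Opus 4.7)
The whole lemma is a direct unfolding: in a normal Kripke frame the relation $R$ is completely determined by $\pi$ through $(x,y)\in R \iff x\leq\pi(y)$, so each of $(R),(L),(Fa),(Fu)$ should translate to a statement that only mentions $\pi$ and $\leq$. The plan is to treat the four parts separately, substituting the defining biconditional and then doing an elementary order-theoretic simplification.

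For $(i)$, reflexivity of $R$ says $(w,w)\in R$ for every $w\in W$, and this is exactly $w\leq\pi(w)$. For $(ii)$, $R\subseteq\,\leq$ says ``$x\leq\pi(y)$ implies $x\leq y$''. Specialising $x:=\pi(y)$ gives $\pi(y)\leq y$ for every $y$; conversely, if $\pi(y)\leq y$ always, then $x\leq\pi(y)$ forces $x\leq y$ by transitivity. So $(i)$ and $(ii)$ are immediate.

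For $(iii)$, rewriting $(Fa)$ via $\pi$: for every $x$ there is $y$ with $y\leq\pi(x)$ such that, for all $z$, $y\leq\pi(z)$ implies $x\leq z$. If $\pi$ is an order embedding, take $y:=\pi(x)$; then $y\leq\pi(x)$ trivially, and $\pi(x)\leq\pi(z)$ yields $x\leq z$ by the embedding property. Conversely, assume $(Fa)$ and suppose $\pi(u)\leq\pi(v)$; apply $(Fa)$ at $x:=u$ to obtain a witness $y$ with $y\leq\pi(u)\leq\pi(v)$, whence the defining implication at $z:=v$ gives $u\leq v$. For $(iv)$, the condition reads: for every $x$ there is $y$ with $x\leq\pi(y)$ such that, for all $z$, $z\leq\pi(y)$ implies $z\leq x$. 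Plugging $z:=\pi(y)$ into the second clause gives $\pi(y)\leq x$; combined with $x\leq\pi(y)$ this yields $\pi(y)=x$, so $\pi$ is surjective. Conversely, if $\pi$ is surjective, for every $x$ choose $y$ with $\pi(y)=x$; then $x\leq\pi(y)$ trivially and $z\leq\pi(y)=x$ gives $z\leq x$ immediately.

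There is no real obstacle here; the only care needed is the bookkeeping in $(iii)$ and $(iv)$, where the existential ``witness'' $y$ in the Kripke-frame definition must be matched up with a natural choice ($y=\pi(x)$ in the faithful case, $y$ any $\pi$-preimage of $x$ in the full case). Once those choices are made, each direction is a one-line order argument.
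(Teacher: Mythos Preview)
Your proof is correct and follows essentially the same approach as the paper's own proof: in each part you translate the relational condition through the defining equivalence $(x,y)\in R \iff x\leq\pi(y)$ and then make the same witness choices (in $(iii)$ taking $y=\pi(x)$ for one direction and specialising $z=v$ for the other; in $(iv)$ specialising $z=\pi(y)$ to force $\pi(y)=x$, and using a $\pi$-preimage for the converse).
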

\begin{proof}
First, recall that by normality, $(x, y) \in R$ iff $x \leq \pi(y)$. We use this equivalence to rewrite all the aforementioned conditions in terms of $\pi$. For $(i)$, by normality, it is clear that $(w, w) \in R$ iff $w \leq \pi(w)$. Hence, there is nothing to prove. For $(ii)$, by normality, the condition $(L)$, i.e., $R \subseteq \; \leq$ simply states that
for all $u, v \in W$ if $v \leq \pi(u)$ then $v \leq u$. This is trivially equivalent to $\pi(u) \leq u$, for all $u \in W$. For $(iii)$, assume $(Fa)$. To prove that $\pi$ is an order embedding, suppose that $\pi(u) \leq \pi(v)$. By $(Fa)$, there exists $y \in W$ such that $(y, u) \in R$ and for any $z \in W$ if $(y, z) \in R$ then $u \leq z$. By normality, $y \leq \pi(u)$. Put $z=v$. Since  $\pi(u) \leq \pi(v)$, we have $y \leq \pi(v)$. Again, by normality, $(y, v) \in R$. Hence, $u \leq v$, by $(Fa)$. For the converse, suppose that $\pi$ is an order embedding. To prove $(Fa)$, we have to show that for any $x \in W$, there exists $y \in W$ such that $(y, x) \in R$ and for any $z \in W$, if $(y, z) \in R$ then $x \leq z$. Set $y=\pi(x)$. Since $y \leq \pi(x)$, by normality, $(y, x) \in R$ and if $(y, z) \in R$, meaning $y \leq \pi(z)$, we have $\pi(x) \leq \pi(z)$ which implies $x \leq z$, by the assumption that $\pi$ is an order embedding. For $(iv)$, assume that $(Fu)$ holds. We show that $\pi$ is surjective. By $(Fu)$, for any $x \in W$, there exists $y \in W$ such that $(x, y) \in R$ and for any $z \in W$ such that $(z, y) \in R$, we have $z \leq x$. We claim that $\pi(y)=x$. Since $(x, y) \in R$, we have $x \leq \pi(y)$. To show $\pi(y) \leq x$, put $z=\pi(y)$. Then, by normality, $(z, y) \in R$ and hence by $(Fu)$, we have $\pi(y)=z \leq x$. Therefore, $\pi(y)=x$. Conversely, assume that $\pi$ is surjective. We show that $(Fu)$ holds. By surjectivity, for any $x \in W$, there exists $y \in W$ such that $\pi(y)=x$. We claim this $y$ works for the condition $(Fu)$. First, by normality, it is clear that $(x, y) \in R$. Now, if $z \in W$ such that $(z, y) \in R$ then, by normality $z \leq \pi(y)=x$. Hence, $z \leq x$.
\end{proof}
Using Theorem \ref{NormalForConditions}, one can interpret the normal Kripke frame $(W, \leq, R)$ with the normality witness $\pi: W \to W$ as the \emph{dynamic poset} $(W, \leq, \pi)$ consisting of the state space $(W, \leq)$ and the dynamism $\pi$. Then, the faithfulness and fullness conditions that we abstracted from dynamic topological systems in Example \ref{TopologicalSpace} play a similar role here. The former means that the dynamism $\pi$ is an order embedding while the latter is its surjectivity.\\

Similar to the previous lemma, when the Kripke frames are normal, we can rewrite the conditions of being a Kripke morphism purely in terms of the normality witnesses of the frames. 
\begin{lemma}\label{NormalityForMorph}
Let $\mathcal{K}=(W, \leq, R)$ and $\mathcal{K}'=(W', \leq', R')$ be two normal Kripke frames with the normality witnesses $\pi$ and $\pi'$, respectively. Then, for an order-preserving map $f: W \to W'$, the following are equivalent:
\begin{description}
\item[$(i)$]
$f$ is a Kripke morphism,
\item[$(ii)$]
$f \circ \pi=\pi' \circ f$ and $\pi'^{-1}(\uparrow \!\!f(k))=f[\pi^{-1}(\uparrow \!\! k)]$, for any $k \in W$.
\end{description}
\end{lemma}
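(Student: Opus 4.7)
The plan is to systematically translate each condition in (i) into a statement about $\pi$ and $\pi'$ using the characterization of $R$ and $R'$ given by normality, and then match these statements with the two clauses of (ii).

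First I would rewrite the three clauses of being a Kripke morphism purely in terms of $\pi$ and $\pi'$. Using $(x,y)\in R \Leftrightarrow x\leq \pi(y)$ and its analogue for $R'$, the clauses become:
\begin{description}
\item[(I)] $k\leq\pi(l)$ implies $f(k)\leq' \pi'(f(l))$;
\item[(II)] $f(k)\leq' \pi'(l')$ implies there exists $l\in W$ with $k\leq\pi(l)$ and $f(l)=l'$;
\item[(III)] $l'\leq'\pi'(f(k))$ implies there exists $l\in W$ with $l\leq\pi(k)$ and $f(l)\geq' l'$.
\end{description}

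For (i)$\Rightarrow$(ii), I would first extract $f\circ\pi=\pi'\circ f$ from (I) and (III) by instantiation. Setting $k=\pi(l)$ in (I) yields $f(\pi(l))\leq'\pi'(f(l))$, while setting $l'=\pi'(f(k))$ in (III) produces an $l\leq\pi(k)$ with $\pi'(f(k))\leq' f(l)\leq' f(\pi(k))$ by monotonicity of $f$, giving the reverse inequality. For the set equality in (ii), the inclusion $\pi'^{-1}(\uparrow\!f(k))\subseteq f[\pi^{-1}(\uparrow\!k)]$ is exactly (II) rephrased (if $\pi'(l')\geq' f(k)$, then (II) furnishes the required $l$), while the inclusion $f[\pi^{-1}(\uparrow\!k)]\supseteq$-side follows directly from the already-established equality $f\pi=\pi'f$ together with monotonicity: if $\pi(l)\geq k$ then $\pi'(f(l))=f(\pi(l))\geq' f(k)$.

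For (ii)$\Rightarrow$(i), clause (I) follows immediately from the equality $f\pi=\pi'f$ and monotonicity of $f$; clause (II) is a direct unpacking of the set equality $\pi'^{-1}(\uparrow\!f(k))=f[\pi^{-1}(\uparrow\!k)]$. The only slightly delicate point is clause (III): given $l'\leq'\pi'(f(k))=f(\pi(k))$, one simply picks $l:=\pi(k)$, which trivially satisfies $l\leq\pi(k)$ and gives $f(l)=f(\pi(k))\geq' l'$ by the chosen inequality.

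I do not anticipate a real obstacle; the only subtlety is bookkeeping the direction of the inequalities, and in particular noticing that the awkward third morphism clause reduces to the innocuous choice $l=\pi(k)$ once the commutativity $f\pi=\pi'f$ is available. This suggests organizing the write-up so that $f\pi=\pi'f$ is established first in both directions, after which the remaining verifications are essentially one-line consequences of monotonicity and the defining adjunction-like formula $(x,y)\in R\Leftrightarrow x\leq\pi(y)$.
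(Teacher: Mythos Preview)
Your proposal is correct and follows essentially the same route as the paper: both arguments first extract $f\pi=\pi'f$ from clauses (I) and (III) by the instantiations $k=\pi(l)$ and $l'=\pi'(f(k))$, then handle the set equality via (II) and monotonicity, and in the reverse direction both use the same choice $l:=\pi(k)$ to verify the third morphism clause. The only cosmetic wrinkle is the phrase ``the inclusion $f[\pi^{-1}(\uparrow\!k)]\supseteq$-side'', where you mean the inclusion $f[\pi^{-1}(\uparrow\!k)]\subseteq \pi'^{-1}(\uparrow\!f(k))$; the justification you give (``if $\pi(l)\geq k$ then $\pi'(f(l))=f(\pi(l))\geq' f(k)$'') is for that direction, so just fix the symbol in the write-up.
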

\begin{proof}
To prove $(ii)$ from $(i)$, we first prove $f(\pi(k))=\pi'(f(k))$, for any $k \in W$. By definition, $f$ maps $R$ into $R'$, which by normality means that if $x \leq \pi(y)$ then $f(x) \leq' \pi'(f(y))$. Hence, putting $x=\pi(k)$ and $y=k$, we reach $f(\pi (k)) \leq' \pi'(f(k))$. To prove $\pi'(f(k)) \leq' f(\pi (k))$, by the third condition in the definition of a Kripke morphism, we know that for any $l' \in W'$ such that $(l', f(k)) \in R'$, there exists $l \in W$ such that $(l, k) \in R$ and $f(l) \geq' l'$. Put $l'=\pi'(f(k))$. It is clear that $l' \leq' \pi'(f(k))$ and hence $(l', f(k)) \in R'$. Therefore, there exists $l \in W$ such that $l \leq \pi(k)$ and $l' \leq' f(l)$. Since, $f$ is order-preserving, we have  $f(l) \leq' f(\pi(k))$. Therefore, $\pi'(f(k))=l' \leq' f(\pi (k))$ which implies $\pi'(f(k))= f(\pi (k))$.\\
For the second part of $(ii)$, 
we first prove the equivalence between $(f(k), l') \in R'$ and $l' \in f[\pi^{-1}(\uparrow \!\! k)]$, for any $k \in W$ and any $l'\in W'$. The forward direction simply is the second condition in the definition of a Kripke morphism. For the other direction, if there exists $l \in W$ such that $f(l)=l'$ and $k \leq \pi(l)$, then $(k, l) \in R$, by the normality condition and as $f$ is a Kripke morphism, we have $(f(k), f(l)) \in R'$ which implies $(f(k), l') \in R'$. 
Now, to prove the second part of $(ii)$, it is enough to note that for any $k \in W$ and any $l'\in W'$, we have the following sequence of equivalences:
$l' \in \pi'^{-1}(\uparrow \!\! f(k))$ iff $f(k) \leq' \pi'(l')$ iff $(f(k), l') \in R'$ iff $l' \in f[\pi^{-1}(\uparrow \!\! k)]$. Therefore, $\pi'^{-1}(\uparrow \!\!f(k))=f[\pi^{-1}(\uparrow \!\! k)]$, for any $k \in W$.\\
To prove $(i)$ from $(ii)$, we have to check the three conditions in the definition of a Kripke morphism. For the first condition, if $(k, l) \in R$ then $k \leq \pi(l)$ which implies $f(k) \leq' f(\pi(l))=\pi'(f(l))$. Hence, $(f(k), f(l)) \in R'$. For the second condition, if $(f(k), l') \in R'$, then $f(k) \leq' \pi'(l')$. Hence, $l' \in \pi'^{-1}(\uparrow \!\! f(k))$ which implies $l' \in f[\pi^{-1}(\uparrow \!\! k)]$. Therefore, there exists $l \in W$ such that $l'=f(l)$ and $k \leq \pi(l)$ which implies $(k, l) \in R$. For the third condition, if  $(l', f(k)) \in R'$, we have $l' \leq' \pi'(f(k))=f(\pi(k))$. Put $l=\pi(k)$. Then, $f(l) \geq' l'$ and $(l, k) \in R$.
\end{proof}

As we already observed in Example \ref{KripkeFrames}, any Kripke frame gives rise to a $\nabla$-algebra in a canonical way. We now show that this canonical assignment is a functor and it preserves the conditions  $\{N, R, L, Fa, Fu\}$. For that purpose, define the assignment $\mathfrak{U}$ on the object $\mathcal{K}=(W, \leq, R)$ of the category $\mathbf{K}$ by $ \mathfrak{U}(\mathcal{K})=(U(W, \leq), \nabla_{\mathcal{K}}, \to_{\mathcal{K}})$, where $\nabla_{\mathcal{K}}(U)=\{x \in X \mid \exists y \in U, (y, x) \in R \}$ and $U \to_{\mathcal{K}} V=\{x \in X \mid \forall y \in U, [(x, y) \in R \Rightarrow y \in V] \}$. Moreover, define $\mathfrak{U}$ on the morphism $f: \mathcal{K} \to \mathcal{K}'$ of $\mathbf{K}$ by $\mathfrak{U}(f)=f^{-1}: \mathfrak{U}(\mathcal{K}') \to \mathfrak{U}(\mathcal{K})$. 
\begin{theorem}\label{UFunctor}
The assignment $\mathfrak{U}: \mathbf{K}^{op} \to \mathbf{Alg}_{\nabla}(H)$ is a functor and for any $C \subseteq \{N, R, L, Fa, Fu\}$, if $\mathcal{K} \in \mathbf{K}(C)$, then $\mathfrak{U}(\mathcal{K})$ lands in $\mathbf{Alg}_{\nabla}(C, H)$. Moreover, the restriction of the functor $\mathfrak{U}$ to $[\mathbf{K}^H(C)]^{op}$ lands in $\mathbf{Alg}^H_{\nabla}(C)$.
\end{theorem}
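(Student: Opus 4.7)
The plan is to leverage Example \ref{KripkeFrames} — which already establishes that $\mathfrak{U}(\mathcal{K}) = (U(W,\leq),\nabla_{\mathcal{K}},\to_{\mathcal{K}},\supset)$ is an explicitly Heyting $\nabla$-algebra — and then verify that $\mathfrak{U}(f)=f^{-1}$ respects all the structure. That $f^{-1}$ preserves unions, intersections, $\varnothing$, $W$, and sends upsets to upsets is automatic from order-preservation of $f$; identity and composition for $\mathfrak{U}$ are the standard contravariant preimage identities. The real content is preservation of $\nabla$ and $\to$. For $\nabla$: the inclusion $\nabla_{\mathcal{K}}(f^{-1}(U)) \subseteq f^{-1}(\nabla_{\mathcal{K}'}(U))$ uses Kripke-morphism clause (1), while the reverse uses clause (3) together with the upset property — given $y' \in U$ with $(y',f(x)) \in R'$, clause (3) supplies $y$ with $(y,x) \in R$ and $f(y) \geq' y'$, which upward-closure of $U$ promotes to $f(y) \in U$. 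For $\to$: the forward inclusion uses clause (1), and the reverse uses clause (2) to lift any $R'$-successor of $f(x)$ inside $U$ to a genuine $R$-successor of $x$ whose image lies in $U$.

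Next, for each $X\in\{N,R,L,Fa,Fu\}$ I show that if $\mathcal{K}$ satisfies $X$ then $\mathfrak{U}(\mathcal{K})$ does too. For $(N)$, writing $\pi$ for the normality witness, I rewrite $\nabla_{\mathcal{K}}(U) = \pi^{-1}(U)$ using upset-ness of $U$; then preimages commute automatically with all meets and with $W$, so $\nabla_{\mathcal{K}}$ is normal. For $(R)$, reflexivity of $R$ gives $U\subseteq\nabla_{\mathcal{K}}(U)$ immediately by taking $y=x$. For $(L)$, $R \subseteq\,\leq$ plus upset-ness gives $\nabla_{\mathcal{K}}(U) \subseteq U$. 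For $(Fa)$ and $(Fu)$, I invoke Theorem \ref{Surjectivity} and Theorem \ref{Injectivity} to reduce the claim to checking $\nabla_{\mathcal{K}}\Box_{\mathcal{K}} U = U$ and $\Box_{\mathcal{K}}\nabla_{\mathcal{K}} U = U$ respectively. In each case one direction is trivial, and the other is exactly what the corresponding frame-side witness delivers: in the faithful case the $y$ furnished by $(Fa)$ sits in $\Box_{\mathcal{K}} U$ because all its $R$-successors dominate the given $x \in U$ and $U$ is upward-closed, and analogously for $(Fu)$.

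Finally, for the Heyting version, the extra zig-zag clause in the definition of a Heyting Kripke morphism is exactly the standard $p$-morphism condition, and it yields $f^{-1}(U \supset V) = f^{-1}(U) \supset f^{-1}(V)$ via a routine back-and-forth: the forward inclusion is free from order-preservation, while the reverse lifts any $y' \geq' f(x)$ with $y' \in U$ to some $y\geq x$ with $f(y)=y'$. The main bookkeeping subtlety is that $\nabla$-preservation of $f^{-1}$ uses the asymmetric clause (3) (which only guarantees $f(l)\geq' l'$ rather than $f(l)=l'$); this looks weaker than one might expect but is exactly what upset-ness is designed to absorb, so no separate strong clause is needed. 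Otherwise the whole theorem is a definitional chase.
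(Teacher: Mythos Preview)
Your proposal is correct and follows essentially the same approach as the paper's proof: both invoke Example~\ref{KripkeFrames} for the object part, verify preservation of $\nabla$ via clauses (1) and (3) and of $\to$ via clauses (1) and (2), handle $(Fa)$ and $(Fu)$ by reducing to $\nabla\Box U=U$ and $\Box\nabla U=U$ through Theorems~\ref{Surjectivity} and~\ref{Injectivity}, and treat the Heyting case via the standard $p$-morphism argument. The only cosmetic difference is that the paper defers the cases $(N)$, $(R)$, $(L)$ to the reference \cite{ImSpace}, whereas you supply short direct arguments (in particular the nice rewriting $\nabla_{\mathcal{K}}U=\pi^{-1}(U)$ for the normal case); this is a presentational choice, not a different strategy.
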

\begin{proof}
First, we study the object part of the functor. As explained in Example \ref{KripkeFrames}, the structure $\mathfrak{U}(W, \leq, R)$ is a Heyting $\nabla$-algebra. For the conditions $\{N, R, L\}$, we refer the reader to \cite{ImSpace}. For $(Fa)$, by Theorem \ref{Surjectivity}, it is enough to prove $\nabla \Box U=U$, for any upset $U \subseteq W$. From the adjunction $\nabla \dashv \Box$, it is clear that $\nabla \Box U \subseteq U$. For the converse, assume $x \in U$. Then, since $(W, \leq, R)$ satisfies $(Fa)$, there exists $y \in W$ such that $(y, x) \in R$ and for any $z \in W$ such that $(y, z) \in R$, we have $x \leq z$. We claim $y \in \Box U$, because for any $z \in W$ such that $(y, z) \in R$, we have $x \leq z$ and as $U$ is upward-closed and $x \in U$, we reach $z \in U$. Now, as $(y, x) \in R$ and $y \in \Box U$, we can conclude $x \in \nabla \Box U$.\\
For $(Fu)$, using Theorem \ref{Injectivity}, it is enough to prove $ \Box \nabla U=U$. Again, from the adjunction $\nabla \dashv \Box$, it is clear that $ U \subseteq \Box \nabla U$. For the converse, assume $x \in \Box \nabla U$. Since $(W, \leq, R)$ satisfies $(Fu)$,  there exists $y \in W$ such that $(x, y) \in R$ and for any $z \in W$ such that $(z, y) \in R$, we have $z \leq x$. Since $(x, y) \in R$, we have $y \in \nabla U$. Therefore, there exists $w \in W$ such that $(w, y) \in R$ and $w \in U$. Therefore, by $(Fu)$, we have $w \leq x$. Since $U$ is upward-closed, we have $x \in U$. \\
For the morphisms, we have to prove that if $f: (W, \leq, R) \to (W', \leq', R')$ is a Kripke morphism, then $\mathfrak{U}(f)=f^{-1}$ preserves $\nabla$ and the implication and if $f$ is also Heyting, $\mathfrak{U}(f)$ also preserves the Heyting implication. For $\nabla$, consider the following two series of equivalences:
\[
x \in f^{-1}(\nabla U) \;\;\; \text{iff} \;\;\; f(x) \in \nabla U \;\;\; \text{iff} \;\;\; \exists y' \in U, [(y', f(x)) \in R'],
\] 
and
\[
x \in \nabla f^{-1}(U) \;\;\; \text{iff} \;\;\; \exists y \in f^{-1}(U), [(y, x) \in R].
\] 
Therefore, it is enough to show the equivalence between $\exists y' \in U, [(y', f(x)) \in R']$ and $\exists y \in f^{-1}(U), [(y, x) \in R ]$. The latter proves the former easily, because $(y, x) \in R$ implies $(f(y), f(x)) \in R'$ and it is sufficient to put $y'=f(y)$. For the converse, if there exists $y' \in U$ such that $(y', f(x)) \in R'$, then by part $(iii)$ of the definition of Kripke morphisms, there exists $y \in W$ such that $f(y) \geq' y'$ and $(y, x) \in R$. The only thing to prove is $f(y) \in U$ which is a consequence of $y' \in U$, $f(y) \geq' y'$, and the upward-closedness of $U$.\\
The proof for the Heyting implication is well-known and the case for implication is similar to that of Heyting implication. Finally, it is clear
by its definition that $\mathfrak{U}$ preserves the identity and the composition.
\end{proof}

There is another functor transforming a distributive $\nabla$-algebra to a Kripke frame. To introduce this functor, we use the usual prime filter construction, extensively explained in \cite{ImSpace}. Here, we recall the construction, as we need the detailed explanation to establish the construction for faithful and full distributive $\nabla$-algebras and also to address the morphisms that are missing in \cite{ImSpace}. More importantly, the construction also plays the main role in the duality theory planned to be covered in the sequel of the present paper and hence deserves its comprehensive presentation.\\

\noindent \textbf{Prime Filter Construction.} Let $\mathcal{A}=(\mathsf{A}, \nabla, \to)$ be a distributive $\nabla$-algebra. Define $\mathfrak{P}(\mathcal{A})=(\mathcal{F}_p(\mathsf{A}), \subseteq, R_{\mathcal{A}})$, where $\mathcal{F}_p(\mathsf{A})$ is the set of all prime filters of $\mathsf{A}$ and the relation $R_{\mathcal{A}}$ defined by 
\begin{center}
$(P, Q) \in R_{\mathcal{A}}$ iff [($a \to b \in P$ and $a \in Q$) implies $b \in Q$], for any $a, b \in A$.    
\end{center}
Moreover, for any $\nabla$-algebra morphism $f: \mathcal{A} \to \mathcal{B}$ define $\mathfrak{P}(f)=f^{-1}: \mathfrak{P}(\mathcal{B}) \to \mathfrak{P}(\mathcal{A})$ and set $i_{\mathcal{A}}: \mathsf{A} \to U(\mathcal{F}_p(\mathsf{A}), \subseteq)$ as $i_{\mathcal{A}}(a)=\{P \in \mathcal{F}_p(\mathsf{A}) \mid a \in P\}$.

\begin{lemma} \label{RelationCharacterization}
$(P, Q) \in R_{\mathcal{A}}$ iff $\nabla [P]=\{\nabla x \mid x \in P\} \subseteq Q$, for any two prime filters $P$ and $Q$ of $\mathsf{A}$.
\end{lemma}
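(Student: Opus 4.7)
The plan is to verify both directions directly from the adjunction $\nabla \dashv \Box$ (recorded in Theorem \ref{BasicPropOfNablaAlg}) together with the defining properties of filters. No use of primeness, the lattice operations, or distributivity is actually needed; the equivalence works for any filters $P, Q$.

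For the forward direction, I assume $(P,Q) \in R_{\mathcal{A}}$ and pick an arbitrary $x \in P$ to show $\nabla x \in Q$. The trick is to produce a single implication of the form $a \to b$ that lies in $P$ and has a ``free'' antecedent that automatically sits in $Q$. The natural candidate is to take $a = 1$ and $b = \nabla x$, i.e., the element $1 \to \nabla x = \Box \nabla x$. By Theorem \ref{BasicPropOfNablaAlg}(iii), we have the unit inequality $x \leq \Box \nabla x$, so upward-closedness of $P$ gives $1 \to \nabla x \in P$. Since $Q$ is a filter, it is closed under the empty meet, so $1 \in Q$. The defining condition of $R_{\mathcal{A}}$ then forces $\nabla x \in Q$.

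For the backward direction, I assume $\nabla[P] \subseteq Q$ and verify the $R_{\mathcal{A}}$-condition directly. Suppose $a \to b \in P$ and $a \in Q$. By hypothesis $\nabla(a \to b) \in Q$, and since $Q$ is closed under finite meets, $\nabla(a \to b) \wedge a \in Q$. The counit inequality $\nabla(a \to b) \wedge a \leq b$ from Theorem \ref{BasicPropOfNablaAlg}(iii) combined with upward-closedness of $Q$ then yields $b \in Q$, as required.

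There is no real obstacle here; the whole content is an instance of the unit/counit package of the adjunction $\nabla(-) \wedge a \dashv a \to (-)$, applied once with $a = 1$ for the forward direction and once with a general $a$ for the backward direction. The only points to be careful about are that ``filter'' includes $1 \in Q$ (so the empty-meet axiom is used silently in the forward step) and that nowhere is primeness of $P$ or $Q$ invoked, so the lemma in fact holds for all filter pairs and the name $R_{\mathcal{A}}$ is justified on this wider domain.
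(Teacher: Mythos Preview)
Your proof is correct and follows essentially the same route as the paper: both directions use $x \leq 1 \to \nabla x$ together with $1 \in Q$ for the forward implication, and $\nabla(a \to b) \wedge a \leq b$ for the converse. Your added remark that primeness is nowhere used is accurate and a nice observation, though the paper does not make it explicit.
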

\begin{proof}
If $(P, Q) \in R_{\mathcal{A}}$ and $x \in P$, since $x \leq 1 \to \nabla x$ and $P$ is a filter, $1 \to \nabla x \in P$. Therefore, as $1 \in Q$ and $(P, Q) \in R_{\mathcal{A}}$, we reach $ \nabla x \in Q$. Conversely, if $\nabla[P] \subseteq Q$, $a \to b \in P$ and $a \in Q$, then $\nabla (a \to b) \in \nabla [P] \subseteq Q$ and since $a \wedge \nabla (a \to b) \leq b$, we have $b \in Q$.
\end{proof}

\begin{theorem}\label{KripkeEmbedding}
The assignment $\mathfrak{P}: \mathbf{Alg}_{\nabla}(D) \to \mathbf{K}^{op}$ is a functor and $i_{\mathcal{A}}: \mathcal{A} \to \mathfrak{U} \mathfrak{P}(\mathcal{A})$ is a $\nabla$-algebra embedding, natural in $\mathcal{A}$. Moreover, for any $C \subseteq \{N, R, L, Fa, Fu\}$, if $\mathcal{A} \in \mathbf{Alg}_{\nabla}(C, D)$, then $\mathfrak{P}(\mathcal{A})$ lands in $[\mathbf{K}(C)]^{op}$. It also maps $\mathbf{Alg}^H_{\nabla}(C)$ to $[\mathbf{K}^H(C)]^{op}$ where $i_{\mathcal{A}}$ becomes a Heyting $\nabla$-algebra morphism.
\end{theorem}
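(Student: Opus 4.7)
The plan is to verify the claims in order: first the object part of $\mathfrak{P}$, then the morphism part, then the naturality and embedding properties of $i_{\mathcal{A}}$, and finally the preservation of the special conditions in $C$. For the object part, I would use Lemma \ref{RelationCharacterization} to rewrite $R_{\mathcal{A}}$ as $\nabla[P] \subseteq Q$. The compatibility of $R_{\mathcal{A}}$ with $\subseteq$ is then immediate: if $P' \subseteq P$, $\nabla[P] \subseteq Q$, and $Q \subseteq Q'$, then $\nabla[P'] \subseteq \nabla[P] \subseteq Q \subseteq Q'$.

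For the morphism part, given a $\nabla$-algebra morphism $f: \mathcal{A} \to \mathcal{B}$, I would verify the three conditions for $f^{-1}: \mathfrak{P}(\mathcal{B}) \to \mathfrak{P}(\mathcal{A})$ to be a Kripke morphism. Order-preservation is automatic. The first zigzag condition follows from the fact that $f$ preserves $\nabla$, so $f(\nabla[f^{-1}(P)]) \subseteq \nabla[P]$, which gives $\nabla[f^{-1}(P)] \subseteq f^{-1}(Q)$ whenever $(P,Q) \in R_{\mathcal{B}}$. The second and third zigzag conditions require the prime filter theorem (Theorem \ref{PrimeFilterTheorem}): given $P \in \mathcal{F}_p(\mathcal{B})$ and $Q' \in \mathcal{F}_p(\mathcal{A})$ with the appropriate relation to $f^{-1}(P)$, I would construct a filter $F$ generated by $f[Q']$ together with $\nabla[P]$, an ideal $I$ consisting of complements, show their disjointness using the $\nabla$-algebra axioms, and apply Theorem \ref{PrimeFilterTheorem} to extract a prime filter $Q$ of $\mathcal{B}$ with $f^{-1}(Q) = Q'$ (or $\supseteq Q'$ in the third case).

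For the embedding $i_{\mathcal{A}}$, the preservation of finite meets, joins, $0$, and $1$ is standard. For $\nabla$, the inclusion $i_{\mathcal{A}}(\nabla a) \supseteq \nabla i_{\mathcal{A}}(a)$ follows directly from Lemma \ref{RelationCharacterization}. The converse requires showing that if $\nabla a \in P$, there exists a prime $Q$ with $a \in Q$ and $\nabla[Q] \subseteq P$; I would take the filter $[a)$ and the ideal of those $x$ with $\nabla x \notin P$, verify their disjointness (using that $\nabla$ preserves joins and primality of $P$ in the form of its complement being an ideal), and apply the prime filter theorem. The case of $\to$ is dual: one direction is immediate from the adjunction characterization of $R_{\mathcal{A}}$, while the other again rests on the prime filter theorem, extending the filter generated by $\nabla[P] \cup \{a\}$ away from an ideal containing $b$. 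Injectivity of $i_{\mathcal{A}}$ follows from the standard separation application of Theorem \ref{PrimeFilterTheorem}. Naturality in $\mathcal{A}$ reduces to the routine identity $f^{-1}(i_{\mathcal{B}}(f(a))) = i_{\mathcal{A}}(a)$.

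For the conditions in $C$: $(R)$ and $(L)$ are straightforward from Theorem \ref{Right} and Theorem \ref{Left} applied fibre-wise on prime filters. For $(N)$, I would define the normality witness $\pi: \mathcal{F}_p(\mathsf{A}) \to \mathcal{F}_p(\mathsf{A})$ by $\pi(Q) = \{a \in A \mid \nabla a \in Q\}$, which is prime precisely because $\nabla$ preserves binary joins (Theorem \ref{BasicPropOfNablaAlg}) and $\nabla 1 = 1$ by normality; the equivalence $P \subseteq \pi(Q) \iff \nabla[P] \subseteq Q$ gives the required characterization of $R_{\mathcal{A}}$. The main obstacle will be conditions $(Fa)$ and $(Fu)$, where I must construct prime filters witnessing faithfulness and fullness: given $P$, I would use Theorem \ref{Surjectivity} and Theorem \ref{Injectivity} together with the prime filter theorem to produce, respectively, a $Q$ with $\pi(Q) = P$ (for fullness) or to check that $\pi$ becomes an order embedding on prime filters (for faithfulness). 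For the Heyting case, I would verify that $f^{-1}$ satisfies the additional Heyting zigzag condition by another prime filter extension argument, and that $i_{\mathcal{A}}$ preserves $\supset$ using the known Heyting analogue of the prime filter argument.
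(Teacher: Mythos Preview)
Your plan matches the paper's proof closely: compatibility of $R_{\mathcal{A}}$ via Lemma~\ref{RelationCharacterization}, the three Kripke-morphism conditions for $f^{-1}$ via prime-filter extensions, the standard prime-filter arguments for $i_{\mathcal{A}}$ preserving $\nabla$ and $\to$, and the construction of the normality witness $\pi(Q)=\nabla^{-1}(Q)$ for $(N)$ are exactly what the paper does (the paper offloads several of these to \cite{ImSpace}, but your sketches are the expected arguments).

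One point to tighten: your treatment of $(Fa)$ and $(Fu)$ is phrased through $\pi$ (``$\pi(Q)=P$'', ``$\pi$ an order embedding''), which implicitly invokes Lemma~\ref{NormalForConditions} and hence presupposes $(N)$. The theorem, however, asserts preservation of $(Fa)$ and $(Fu)$ for arbitrary $C\subseteq\{N,R,L,Fa,Fu\}$, so you must verify the raw Kripke-frame conditions directly. The paper does this by constructing, for a given prime $P$, a prime $Q$ with $\Box[P]\subseteq Q\subseteq\nabla^{-1}(P)$ in the $(Fa)$ case (using $\nabla\Box p=p$ from Theorem~\ref{Surjectivity} to get disjointness of the relevant filter and ideal), and a prime $Q$ with $\nabla[P]\subseteq Q$ and $\nabla^{-1}(Q)\subseteq P$ in the $(Fu)$ case (using that $\nabla$ is an order embedding from Theorem~\ref{Injectivity}). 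Your phrase ``construct prime filters witnessing faithfulness and fullness \ldots\ together with the prime filter theorem'' is the right instinct; just make sure the actual argument does not route through $\pi$, since $\nabla^{-1}(Q)$ need not be a prime filter when $(N)$ fails. Also, your naturality identity is slightly miswritten: the square to check is $(\mathfrak{P}(f))^{-1}(i_{\mathcal{A}}(a))=i_{\mathcal{B}}(f(a))$, which unwinds to $a\in f^{-1}(Q)\iff f(a)\in Q$.
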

\begin{proof}
For the sake of readability, we split the proof into some parts. First, we focus on the objects and then we will cover the morphisms.

I. First, note that by Lemma \ref{RelationCharacterization}, it is easy to prove that $R_{\mathcal{A}}$ is compatible with the relation $\subseteq$ which implies that the structure $\mathfrak{P}(\mathcal{A})$ is actually a Kripke frame. To show that $i_{\mathcal{A}}$ is a $\nabla$-algebra embedding, note that the following three facts are well-known: first, $i_{\mathcal{A}}$ is a bounded lattice embedding, second, it is natural even on all distributive bounded lattices and the third, if $\mathcal{A}$ is a Heyting algebra, $i_{\mathcal{A}}$ also preserves the Heyting implication \cite{Esakia,Fitting}. Therefore, the only thing to check is the preservation of $\nabla$ and $\to$ which can be found in full detail in the proof of Theorem 17 in \cite{ImSpace}.

II. We show that $\mathfrak{P}$ preserves the conditions in the set $\{N, R, L, Fa, Fu\}$. For the conditions $\{N, R, L\}$, we refer the reader to the proof of Theorem 17 in \cite{ImSpace}. For the other two conditions, i.e., $(Fa)$ and $(Fu)$, we have the following:\\
For $(Fa)$, assume that $\mathcal{A}$ satisfies $(Fa)$. We first show that for any $P \in \mathcal{F}_p(\mathsf{A})$, there exists  $Q \in \mathcal{F}_p(\mathsf{A})$ such that $\Box [P] \subseteq Q \subseteq \nabla^{-1} (P)$, where $\Box [P]=\{\Box a \mid a \in P\}$. Define $F$ as the filter generated by $\Box [P]$ and $I$ as the ideal generated by $(\nabla^{-1} P)^c$. We have $F \cap I=\varnothing$. Because, if $x \in F \cap I$, there are $p_1, \cdots, p_n \in P$ and $y_1, \cdots, y_m \in (\nabla^{-1} P)^c$ such that $\bigwedge_{i=1}^n \Box p_i \leq x \leq \bigvee_{j=1}^m y_j$. Define $p=\bigwedge_{i=1}^n p_i$. Since $P$ is a filter, we have $p \in P$. Since $\Box$ commutes with the meets, we have $\Box p \leq \bigvee_{j=1}^m y_j$. Hence, $p=\nabla \Box p \leq \bigvee_{j=1}^m \nabla y_j$ and as $p \in P$ and $P$ is a filter, we have $\bigvee_{j=1}^m \nabla y_j \in P$. Since $P$ is a prime filter, for at least one $1 \leq j \leq m$, we have $\nabla y_j \in P$ which is impossible as $y_j \in (\nabla^{-1} P)^c$. Hence, $F \cap I=\varnothing$. Therefore, by the prime filter theorem, Theorem \ref{PrimeFilterTheorem}, there exists a prime filter $Q$ such that $F \subseteq Q$ and $Q \cap I=\varnothing$. By the first, we can prove $\Box [P] \subseteq Q$. By the second, we have $Q \subseteq \nabla^{-1} (P)$.\\
Now, we are ready to prove that $\mathfrak{P}({\mathcal{A}})$ satisfies the condition $(Fa)$. We have to show that for any prime filter $P$, there exists a prime filter $Q$ such that $(Q, P) \in R_{\mathcal{A}}$ and for any prime filter $M$, if $(Q, M) \in R_{\mathcal{A}}$, then $P \subseteq M$. To prove that, pick an arbitrary prime filter $P$ and set $Q$ as the prime filter constructed above. Since $Q \subseteq \nabla^{-1}(P)$, we have $\nabla[Q] \subseteq P$ which implies $(Q, P) \in R_{\mathcal{A}}$, by Lemma \ref{RelationCharacterization}. Let $M$ be a prime filter such that $(Q, M) \in R_{\mathcal{A}}$. Hence, $\nabla[Q] \subseteq M$, by Lemma \ref{RelationCharacterization}. To show that $P \subseteq M$, let $p \in P$. Then $p=\nabla \Box p$. Hence, $p \in \nabla [\Box [P]] \subseteq \nabla [Q] \subseteq M$. Therefore, $P \subseteq M$.\\
For $(Fu)$, assume that $\mathcal{A}$ satisfies $(Fu)$. We first show that for any $P \in \mathcal{F}_p(\mathsf{A})$, there exists  $Q \in \mathcal{F}_p(\mathsf{A})$ such that $\nabla [P] \subseteq Q$ and $\nabla^{-1}(Q) \subseteq P$. Define $F$ as the filter generated by $\nabla [P]$ and $I$ as the ideal generated by $\nabla [P^c]$. We have $F \cap I=\varnothing$. Because, if $x \in F \cap I$, there are $p_1, \cdots, p_n \in P$ and $y_1, \cdots, y_m \in P^c$ such that $\bigwedge_{i=1}^n \nabla p_i \leq x \leq \bigvee_{j=1}^m \nabla y_j$. Define $p=\bigwedge_{i=1}^n p_i$. Since $P$ is a filter, we have $p \in P$. Since $\nabla$ commutes with the joins and $\nabla p \leq \bigwedge_{i=1}^n \nabla p_i$, we have $\nabla p \leq \nabla (\bigvee_{j=1}^m y_j)$. Since $\mathcal{A}$ satisfies $(Fu)$, $\nabla$ is an order embedding by Theorem \ref{Injectivity}. Hence, we have $p \leq \bigvee_{j=1}^m y_j$. Since $P$ is a prime filter, for at least one $1 \leq j \leq m$, we have $ y_j \in P$ which is impossible as $y_j \in P^c$. Hence, $F \cap I=\varnothing$. Therefore, by the prime filter theorem, Theorem \ref{PrimeFilterTheorem}, there exists a prime filter $Q$ such that $F \subseteq Q$ and $Q \cap I=\varnothing$. By the first, we can prove $\nabla [P] \subseteq Q$. By the second, we have $\nabla^{-1} (Q) \subseteq P$, because if $x \in \nabla^{-1} (Q)$ and $x \notin P$, then $\nabla x \in Q \cap \nabla [P^c]$ which is impossible. \\ 
Now, we can prove that $\mathfrak{P}({\mathcal{A}})$ satisfies the condition $(Fu)$. We have to show that for any prime filter $P$, there exists a prime filter $Q$ such that $(P, Q) \in R_{\mathcal{A}}$ and for any prime filter $M$, if $(M, Q) \in R_{\mathcal{A}}$, then $M \subseteq P$. Let $P$ be an arbitrary prime filter. Pick $Q$ as the prime filter constructed above. Since $\nabla [P] \subseteq Q$, we have $(P, Q) \in R_{\mathcal{A}}$, by Lemma \ref{RelationCharacterization}. Let $M$ be a prime filter such that $(M, Q) \in R_{\mathcal{A}}$. We have to show that $M \subseteq P$. Since $(M, Q) \in R_{\mathcal{A}}$, by Lemma \ref{RelationCharacterization}, we have $\nabla [M] \subseteq Q$ and hence $M \subseteq \nabla^{-1}(Q) \subseteq P$.

III. To address the morphisms, i.e., to show that if $f: \mathcal{A} \to \mathcal{B}$ is a $\nabla$-algebra morphism, $\mathfrak{P}(f)=f^{-1}:(\mathcal{F}_p(\mathsf{B}), \subseteq, R_{\mathcal{B}}) \to (\mathcal{F}_p(\mathsf{A}), \subseteq, R_{\mathcal{A}})$ is a Kripke morphism, we have to check the three conditions in the definition of a Kripke morphism.
First, we show that $(P, Q) \in R_{\mathcal{B}}$ implies $(f^{-1}(P), f^{-1}(Q)) \in R_{\mathcal{A}}$, for any $P, Q \in \mathcal{F}_p(\mathsf{B})$. First, note that by the fact that $f$ preserves $\nabla$, we have $\nabla_{\mathcal{A}} [f^{-1}(P)] \subseteq f^{-1} (\nabla_{\mathcal{B}} [P])$. The reason is that if $x \in f^{-1}(P)$, then as $f(\nabla_{\mathcal{A}} x)=\nabla_{\mathcal{B}} f(x) \in \nabla_{\mathcal{B}} [P]$, we have $\nabla_{\mathcal{A}} x \in f^{-1} (\nabla_{\mathcal{B}} [P])$. Now, assume $(P, Q) \in R_{\mathcal{B}}$. By Lemma \ref{RelationCharacterization}, we have $\nabla_{\mathcal{B}} [P] \subseteq Q$ which implies 
$
\nabla_{\mathcal{A}} [f^{-1}(P)] \subseteq f^{-1}(\nabla_{\mathcal{B}} [P]) \subseteq f^{-1} (Q)$,
and hence $(f^{-1}(P), f^{-1}(Q)) \in R_{\mathcal{A}}$, by Lemma \ref{RelationCharacterization}.\\
For the second condition, assume $(f^{-1}(P'), Q) \in R_{\mathcal{A}}$, for some $P' \in \mathcal{F}_p(\mathsf{B})$ and $Q \in \mathcal{F}_p(\mathsf{A})$. We have to provide $Q' \in \mathcal{F}_p(\mathsf{B})$ such that $(P', Q') \in R_{\mathcal{B}}$ and $f^{-1}(Q')=Q$. Define $I$ as the ideal generated by $f[Q^c]$ and $F$ as the filter generated by $\nabla_{\mathcal{B}}[P'] \cup f[Q]$. We claim that $F \cap I=\varnothing$. Assume $x \in F \cap I$. Then, there exist $y_1, \cdots, y_m \in Q^c$, $z_1, \cdots, z_n \in Q$ and $w_1, \cdots, w_k \in P'$ such that $\bigwedge_{j=1}^n f(z_j) \wedge \bigwedge_{r=1}^k \nabla_{\mathcal{B}} w_r \leq x \leq \bigvee_{i=1}^m f(y_i) $. Define $z=\bigwedge_{j=1}^n z_j$, $w=\bigwedge_{r=1}^k w_r$ and note that $z \in Q$ and $w \in P'$, since both $Q$ and $P'$ are filters. Then, by the monotonicity of $\nabla_{\mathcal{B}}$ and the fact that $f$ is a $\nabla$-algebra morphism, we have $\nabla_{\mathcal{B}} w \wedge f(z) \leq f(\bigvee_{i=1}^m y_i)$. Therefore, $w \leq_{\mathcal{B}} f(z) \to_{\mathcal{B}} f(\bigvee_{i=1}^m y_i)=f(z \to_{\mathcal{A}} \bigvee_{i=1}^m y_i)$. Since $w \in P'$, we have $f(z \to_{\mathcal{A}} \bigvee_{i=1}^m y_i) \in P'$ which implies $z \to_{\mathcal{A}} \bigvee_{i=1}^m y_i \in f^{-1}(P')$ and hence $\nabla_{\mathcal{A}}(z \to_{\mathcal{A}} \bigvee_{i=1}^m y_i) \in \nabla_{\mathcal{A}} [f^{-1}(P')]$. Since $(f^{-1}(P'), Q) \in R_{\mathcal{A}}$, by Lemma \ref{RelationCharacterization}, we have $\nabla_{\mathcal{A}} [f^{-1}(P')] \subseteq Q$. Hence, $\nabla_{\mathcal{A}}(z \to_{\mathcal{A}} \bigvee_{i=1}^m y_i) \in Q$. Since $z \in Q$ and $z \wedge \nabla_{\mathcal{A}}(z \to_{\mathcal{A}} \bigvee_{i=1}^m y_i) \leq \bigvee_{i=1}^m y_i$, we have $\bigvee_{i=1}^m y_i \in Q$. Since $Q$ is prime, for some $1 \leq i \leq m$, we must have $y_i \in Q$ which is a contradiction with the choice of $y_i$. Therefore, $F \cap I = \varnothing$. Now, by the prime filter theorem, Theorem \ref{PrimeFilterTheorem}, there exists a prime filter $Q' \in \mathcal{F}_p(\mathsf{B})$ such that $F \subseteq Q'$ and $Q' \cap I=\varnothing$. The first implies $\nabla_{\mathcal{B}}[P'] \cup f[Q] \subseteq Q'$ which also implies $(P', Q') \in R_{\mathcal{B}}$ and $Q \subseteq f^{-1}(Q')$. From the second, we have $f^{-1}(Q') \subseteq Q$, because if $x \in f^{-1}(Q')$ and $x \notin Q$, then $x \in Q^c$ and hence $f(x) \in I$ which implies the contradictory result $f(x) \in Q' \cap I$. Hence, $f^{-1}(Q')=Q$.\\
For the third condition, let us assume $(Q, f^{-1}(P')) \in R_{\mathcal{A}}$, for some $P' \in \mathcal{F}_p(\mathsf{B})$ and $Q \in \mathcal{F}_p(\mathsf{A})$. We have to find a prime filter $Q' \in \mathcal{F}_p(\mathsf{B})$ such that $(Q', P') \in R_{\mathcal{B}}$ and $Q \subseteq f^{-1}(Q')$. Define $I$ as the ideal generated by $(\nabla ^{-1}_{\mathcal{B}} [P'])^c$ and $F$ as the filter generated by $f[Q]$. We have $F \cap I=\varnothing$, because if $x \in I \cap F$, then, there exist $y_1, \cdots, y_m \notin \nabla^{-1}_{\mathcal{B}} [P']$ and $z_1, \cdots, z_n \in Q$ such that $\bigwedge_{i=1}^n f(z_i) \leq x \leq \bigvee_{j=1}^m y_j$. Define $z=\bigwedge_{i=1}^n z_i$ and note that $z \in Q$, since $Q$ is a filter. Since $f$ is a $\nabla$-algebra morphism, $f(z) \leq \bigvee_{j=1}^m y_j$. Therefore, $f(\nabla_{\mathcal{A}} z)=\nabla_{\mathcal{B}} f(z) \leq \bigvee_{j=1}^m \nabla_{\mathcal{B}} y_j$. Since $(Q, f^{-1}(P')) \in R_{\mathcal{A}}$, by Lemma \ref{RelationCharacterization}, we have $\nabla_{\mathcal{A}}[Q] \subseteq f^{-1}(P')$. Since $z \in Q$, we have $f(\nabla_{\mathcal{A}} z) \in P'$ and hence $\bigvee_{j=1}^m \nabla_{\mathcal{B}} y_j \in P'$. Since $P'$ is prime, for some $1 \leq j \leq m$, we have $\nabla_{\mathcal{B}} y_j \in P'$ which contradicts  $y_j \notin \nabla^{-1}_{\mathcal{B}} [P']$. Hence, $F \cap I=\varnothing$. Therefore, by the prime filter theorem, Theorem \ref{PrimeFilterTheorem}, there exists a prime filter $Q' \in \mathcal{F}_p(\mathcal{B})$ such that $F \subseteq Q'$ and $Q'\cap I=\varnothing$. The first implies $f[Q] \subseteq Q'$ and hence $Q \subseteq f^{-1}(Q')$ and the second implies $Q' \subseteq \nabla^{-1}_{\mathcal{B}}(P')$ which means $\nabla_{\mathcal{B}}[Q'] \subseteq P'$ and by Lemma \ref{RelationCharacterization}, $(Q', P') \in R_{\mathcal{B}}$. \\
For the Heyting $\nabla$-algebra morphisms, it is a well-known fact that if $f$ preserves the Heyting implication, then $f^{-1}$ satisfies the fourth condition in the definition of Heyting Kripke morphisms, see Theorem 3.3.4 in \cite{Esakia}. Finally, note that $\mathfrak{P}$ clearly preserves the identity and the composition.
\end{proof}

\subsection{Amalgamation Property}\label{SectionAmalgamation}
In this subsection, we use the provided Kripke representation of distributive $\nabla$-algebras to prove the amalgamation property for the varieties $\mathcal{V}(C, D, N)$ and $\mathcal{V}_H(C, N)$, for any $C \subseteq \{R, L, Fa\}$.
\begin{definition}
A class $\mathfrak{C}$ of $\nabla$-algebras has the {\em amalgamation property} if for any $\mathcal{A}_{0}, \mathcal{A}_1$ and $\mathcal{A}_2$ in $\mathfrak{C}$ and any $\nabla$-algebra embeddings $f_{1}:\mathcal{A}_0 \longrightarrow \mathcal{A}_1$ and $f_{2}: \mathcal{A}_0 \longrightarrow \mathcal{A}_2$, there exist a $\nabla$-algebra $\mathcal{B}$ in $\mathfrak{C}$ and $\nabla$-algebra embeddings $g_{1}: \mathcal{A}_1 \longrightarrow \mathcal{B}$ and $g_{2}: \mathcal{A}_2 \longrightarrow \mathcal{B}$ such that $g_{1} \circ f_{1}=g_{2}\circ f_{2}$:
\[\small \begin{tikzcd}
	&& {\mathcal{A}_1} \\
	{\mathcal{A}_0} &&&& {\mathcal{B}} \\
	&& {\mathcal{A}_2}
	\arrow["{f_1}", hook, from=2-1, to=1-3]
	\arrow["{f_2}"', hook, from=2-1, to=3-3]
	\arrow["{g_1}", hook, from=1-3, to=2-5]
	\arrow["{g_2}"', hook, from=3-3, to=2-5]
\end{tikzcd}\]
The amalgamation property for a class of explicitly Heyting $\nabla$-algebras is defined similarly, replacing $\nabla$-algebra morphisms by Heyting $\nabla$-algebra morphisms, everywhere.
\end{definition}
Our strategy to prove the amalgamation property is to transform the property to the dual property for the Kripke frames where constructing the new frames is rather easier. For that purpose, we need the following well-known fact for which we also provide a proof for the sake of completeness:
\begin{lemma}\label{SurjToInj}
Let $\mathsf{A}$ and $\mathsf{B}$ be two bounded distributive lattices and $f: \mathsf{A} \to \mathsf{B}$ be an injective bounded lattice map. Then, the map $f^{-1}: \mathcal{F}_{p}(\mathsf{B)} \to \mathcal{F}_{p}(\mathsf{A)}$ is surjective. Therefore, the functor $\mathfrak{P}$ maps the injective $\nabla$-algebra morphisms to surjective Kripke morphisms. Conversely, the functor $\mathfrak{U}$ maps surjective Kripke morphisms to injective $\nabla$-algebra morphisms.
\end{lemma}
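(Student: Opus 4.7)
The plan is to prove the three assertions in sequence, with the first statement doing the real work and the remaining two following formally.

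For the surjectivity of $f^{-1}: \mathcal{F}_p(\mathsf{B}) \to \mathcal{F}_p(\mathsf{A})$, I would fix a prime filter $P \in \mathcal{F}_p(\mathsf{A})$ and construct a preimage via the Prime Filter Theorem (Theorem \ref{PrimeFilterTheorem}). Let $F$ be the filter of $\mathsf{B}$ generated by $f[P]$ and $I$ the ideal generated by $f[P^c]$. The crux is showing $F \cap I = \varnothing$. If $x \in F \cap I$, there exist $p_1, \ldots, p_n \in P$ and $q_1, \ldots, q_m \in P^c$ with $\bigwedge_i f(p_i) \leq x \leq \bigvee_j f(q_j)$, so $f(\bigwedge_i p_i) \leq f(\bigvee_j q_j)$. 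Here I invoke the standard fact that an injective lattice homomorphism is automatically an order embedding: $a \leq b$ iff $a \wedge b = a$ iff $f(a \wedge b) = f(a)$ iff $f(a) \leq f(b)$, using injectivity in the middle equivalence. Hence $\bigwedge_i p_i \leq \bigvee_j q_j$, and since $P$ is a prime filter, some $q_j$ lies in $P$, contradicting $q_j \in P^c$.

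With $F \cap I = \varnothing$ in hand, Theorem \ref{PrimeFilterTheorem} yields a prime filter $Q$ of $\mathsf{B}$ with $F \subseteq Q$ and $Q \cap I = \varnothing$. The inclusion $f[P] \subseteq F \subseteq Q$ gives $P \subseteq f^{-1}(Q)$, while $f[P^c] \subseteq I$ and $Q \cap I = \varnothing$ force $f^{-1}(Q) \cap P^c = \varnothing$, i.e.\ $f^{-1}(Q) \subseteq P$. Thus $f^{-1}(Q) = P$, establishing surjectivity.

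The second assertion is then immediate: any injective $\nabla$-algebra morphism $f: \mathcal{A} \to \mathcal{B}$ is in particular an injective bounded lattice map between distributive lattices, so by the first part $\mathfrak{P}(f) = f^{-1}$ is surjective on underlying sets; together with the fact already established in Theorem \ref{KripkeEmbedding} that $\mathfrak{P}(f)$ is a Kripke morphism, this gives a surjective Kripke morphism. For the converse, let $f: \mathcal{K}' \to \mathcal{K}$ be a surjective Kripke morphism and suppose $\mathfrak{U}(f)(U) = \mathfrak{U}(f)(V)$, i.e.\ $f^{-1}(U) = f^{-1}(V)$ for upsets $U, V$ of $\mathcal{K}$. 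For any $x \in U$, by surjectivity pick $y \in W'$ with $f(y) = x$; then $y \in f^{-1}(U) = f^{-1}(V)$, so $x = f(y) \in V$, and symmetrically $V \subseteq U$, yielding $U = V$. Hence $\mathfrak{U}(f)$ is injective.

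No step here is a serious obstacle; the only delicate point is the passage from $f(\bigwedge_i p_i) \leq f(\bigvee_j q_j)$ to $\bigwedge_i p_i \leq \bigvee_j q_j$, which requires noting that an injective bounded lattice homomorphism is automatically order-reflecting. Everything else is a standard filter/ideal separation argument combined with the Prime Filter Theorem.
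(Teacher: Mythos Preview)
Your proposal is correct and follows essentially the same approach as the paper: both define $F$ as the filter generated by $f[P]$ and $I$ as the ideal generated by $f[P^c]$, derive a contradiction from $F \cap I \neq \varnothing$ via the order-embedding property of injective lattice maps and primality of $P$, then apply the Prime Filter Theorem to obtain $Q$ with $f^{-1}(Q)=P$. Your treatment of the second and third assertions also matches the paper's, with the only difference being that you spell out the order-reflecting argument and the injectivity of $f^{-1}$ under surjectivity in slightly more detail.
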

\begin{proof}
The last part is an easy consequence of the fact that if $f: W_1 \to W_2$ is surjective, then $\mathfrak{U}(f)=f^{-1}$ is one-to-one. For the first part, assume that $f: \mathsf{A} \to \mathsf{B}$ is a one-to-one bounded lattice morphism. Then, we want to prove that for any $P \in \mathcal{F}_p(\mathsf{A})$, there exists $Q \in \mathcal{F}_p(\mathsf{B})$ such that $f^{-1}(Q)=P$. Define $F$ as the filter generated by $f[P]$ and $I$ as the ideal generated by $f[P^c]$. We have $F \cap I=\varnothing$, because if $x \in F \cap I$, then there exist $p_1, \cdots, p_m \in P$ and $y_1, \cdots, y_n \in P^c$ such that $\bigwedge_{i=1}^m f(p_i) \leq x \leq \bigvee_{j=1}^n f(y_j)$. Since $f$ is a bounded lattice morphism, we have $f(\bigwedge_{i=1}^m p_i) \leq f(\bigvee_{j=1}^n y_j)$. Since $f$ is one-to-one, it is an order embedding and hence, we have $\bigwedge_{i=1}^m p_i \leq \bigvee_{j=1}^n y_j$. Since $P$ is a prime filter, for at least one $1 \leq j \leq n$, we must have $y_j \in P$ which contradicts with the choice of $y_j$. Hence, $F \cap I=\varnothing$. Now, by the prime filter theorem, Theorem \ref{PrimeFilterTheorem}, there exists a prime filter $Q \in \mathcal{F}_p(\mathsf{B})$ such that $F \subseteq Q$ and $Q \cap I=\varnothing$. From the first, we have $f[P] \subseteq Q$ which implies $P \subseteq f^{-1}(Q)$ and from the second $f^{-1}(Q) \subseteq P$. Hence, $f^{-1}(Q)=P$.
\end{proof}
The next lemma is the dual property we promised to prove.
\begin{lemma}\label{AmalgamtionForKripke}
Let $C \subseteq \{R, L, Fa\}$. Then, for any normal Kripke frames $\mathcal{K}_0=(W_0, \leq_0, R_0)$, $\mathcal{K}_1=(W_1, \leq_1, R_1)$ and $\mathcal{K}_2=(W_2, \leq_2, R_2)$ in $\mathbf{K}(C, N)$ and any surjective Kripke morphisms $f: \mathcal{K}_1 \to \mathcal{K}_0$ and $g: \mathcal{K}_2 \to \mathcal{K}_0$, there exist a Kripke frame $\mathcal{K} \in \mathbf{K}(C, N)$ and surjective Kripke morphisms $p: \mathcal{K} \to \mathcal{K}_1$ and $q: \mathcal{K} \to \mathcal{K}_2$ such that $f \circ p=g \circ q$:
\[\small \begin{tikzcd}
	&& {\mathcal{K}_1} \\
	{\mathcal{K}} &&&& {\mathcal{K}_0} \\
	&& {\mathcal{K}_2}
	\arrow["p", two heads, from=2-1, to=1-3]
	\arrow["q"', two heads, from=2-1, to=3-3]
	\arrow["f", two heads, from=1-3, to=2-5]
	\arrow["g"', two heads, from=3-3, to=2-5]
\end{tikzcd}\]
Moreover, if $f$ and $g$ are Heyting, so are $p$ and $q$.
\end{lemma}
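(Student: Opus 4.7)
The plan is to take the evident pullback (fibered product) of the normal Kripke frames. Define
\[
W = \{(w_1,w_2)\in W_1\times W_2 \mid f(w_1)=g(w_2)\},
\]
equipped with the componentwise order $\leq$ and the map $\pi(w_1,w_2)=(\pi_1(w_1),\pi_2(w_2))$, where $\pi_0,\pi_1,\pi_2$ are the normality witnesses of $\mathcal{K}_0,\mathcal{K}_1,\mathcal{K}_2$. First I would check that $\pi$ lands in $W$: since $f$ and $g$ are Kripke morphisms between normal frames, Lemma~\ref{NormalityForMorph} gives $f\circ\pi_1=\pi_0\circ f$ and $g\circ\pi_2=\pi_0\circ g$, whence $f(\pi_1(w_1))=\pi_0(f(w_1))=\pi_0(g(w_2))=g(\pi_2(w_2))$. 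Then define $R$ on $W$ by $((w_1,w_2),(w_1',w_2'))\in R$ iff $(w_1,w_2)\leq \pi(w_1',w_2')$, so $(W,\leq,R)$ is normal with witness $\pi$ by construction. Let $p,q$ be the two projections; commutativity $f\circ p=g\circ q$ is immediate from the definition of $W$.

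Next I would verify that $p$ and $q$ are surjective Kripke morphisms. Surjectivity of $p$ follows from surjectivity of $g$: given $w_1\in W_1$, pick $w_2\in W_2$ with $g(w_2)=f(w_1)$, so $(w_1,w_2)\in W$. For the Kripke-morphism conditions, by Lemma~\ref{NormalityForMorph} it suffices to verify $p\circ\pi=\pi_1\circ p$ (which is trivial from the definition of $\pi$) together with $\pi_1^{-1}(\uparrow p(w_1,w_2))=p[\pi^{-1}(\uparrow(w_1,w_2))]$. The nontrivial inclusion is $\subseteq$: given $v_1\in W_1$ with $w_1\leq_1\pi_1(v_1)$, applying $f$ and using $f\circ\pi_1=\pi_0\circ f$ gives $g(w_2)=f(w_1)\leq_0\pi_0(f(v_1))$, i.e.\ $f(v_1)\in\pi_0^{-1}(\uparrow g(w_2))$. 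Now the Kripke-morphism property of $g$, rewritten by Lemma~\ref{NormalityForMorph} as $\pi_0^{-1}(\uparrow g(w_2))=g[\pi_2^{-1}(\uparrow w_2)]$, produces $v_2\in W_2$ with $g(v_2)=f(v_1)$ and $w_2\leq_2\pi_2(v_2)$; then $(v_1,v_2)\in W$ and $(w_1,w_2)\leq\pi(v_1,v_2)$, as required. By symmetry, the same argument (with $f$ and $g$ swapped) handles $q$. This symmetric appeal to the characterization of Lemma~\ref{NormalityForMorph} is the main technical point of the whole proof.

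Then I would check preservation of the conditions in $C$ using Lemma~\ref{NormalForConditions}. For $(R)$: if $w_i\leq_i\pi_i(w_i)$ for $i=1,2$, then $(w_1,w_2)\leq\pi(w_1,w_2)$; $(L)$ is dual. For $(Fa)$: if $\pi(w_1,w_2)\leq\pi(v_1,v_2)$, then $\pi_i(w_i)\leq_i\pi_i(v_i)$ for $i=1,2$, and the order-embedding property of each $\pi_i$ yields $(w_1,w_2)\leq(v_1,v_2)$, so $\pi$ is an order embedding. (Note that $(Fu)$ would require surjectivity of $\pi$ on $W$, which the pullback does not provide in general unless $\pi_0$ is itself an order embedding; this is precisely why $Fu$ is excluded from $C$.) Finally, for the Heyting case, suppose both $f$ and $g$ satisfy the extra fourth condition. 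To check it for $p$, given $(w_1,w_2)\in W$ and $v_1\in W_1$ with $w_1\leq_1 v_1$, apply $f$ to get $g(w_2)=f(w_1)\leq_0 f(v_1)$, and use the Heyting property of $g$ to produce $v_2\in W_2$ with $w_2\leq_2 v_2$ and $g(v_2)=f(v_1)$; then $(v_1,v_2)\in W$ lifts $v_1$ above $(w_1,w_2)$. Symmetrically for $q$, using the Heyting property of $f$. Hence $\mathcal{K}\in\mathbf{K}(C,N)$ together with $p,q$ provides the required amalgam.
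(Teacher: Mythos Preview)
Your proof is correct and follows essentially the same approach as the paper: the pullback $W=\{(w_1,w_2)\mid f(w_1)=g(w_2)\}$ with componentwise order and normality witness $\pi=(\pi_1,\pi_2)$, projections as the morphisms, and the key lifting step handled via Lemma~\ref{NormalityForMorph} by pushing down through $f$ and lifting back through $g$. The only cosmetic difference is that the paper defines $R$ as the restriction of $R_1\times R_2$ and then observes $\pi$ is its normality witness, whereas you define $R$ directly from $\pi$; these are equivalent.
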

\begin{proof}
Define 
$
W= \{ (y, z) \in W_1 \times W_2 \mid f(y)=g(z) \}
$, $\leq \; =(\leq_1 \times \leq_2 )|_{W}$ and $R = (R_1 \times R_2)|_{W}$. It is easy to see that $\mathcal{K}=(W, \leq, R)$ is a Kripke frame, as $R$ is clearly compatible with $\leq$. To prove that $\mathcal{K}$ is normal, assume that $\pi_1: W_1 \to W_1$ and $\pi_2: W_2 \to W_2$ are the normality witnesses of $\mathcal{K}_1$ and $\mathcal{K}_2$, respectively. Define $\pi: W \to W$ by $\pi(y, z)=(\pi_1(y), \pi_2(z))$. It is well-defined, i.e., $\pi(y, z) \in W$, for any $(y, z) \in W$. Because, $(y, z) \in W$ implies $f(y)=g(z)$ and since $f$ and $g$ are Kripke morphisms, by Lemma \ref{NormalityForMorph}, we have $f(\pi_1(y))=\pi_0(f(y))=\pi_0(g(z))=g(\pi_2(z))$ 
which implies $(\pi_1(y), \pi_2 (z)) \in W$. The function $\pi$ also respects the order $\leq$ and we have $((y, z), (y', z')) \in R$ iff $(y, z) \leq \pi(y', z')$. Hence, $\mathcal{K}$ is normal. To show that $\mathcal{K} \in \mathbf{K}(C)$, using Lemma \ref{NormalForConditions}, the conditions $\{L, R, Fa\}$ for $\mathcal{K}$ are equivalent to $\pi(y, z) \leq (y, z)$, $(y, z) \leq \pi(y, z)$ and the condition that $\pi$ is an order embedding, respectively. All the three are inherited from $\mathcal{K}_1$ and $\mathcal{K}_2$ to their product and since they are universal conditions, to the frame $\mathcal{K}$.\\
Now, set $p: W \to W_1$ and $q: W \to W_2$ as the projection functions. By the definition of $W$, it is clear that $f \circ p=g \circ q$. We only need to prove that $p$ and $q$
are surjective Kripke morphisms. We prove the claim for $p$. The case for $q$ is similar. First, note that $p$ is surjective, because for any $y \in W_1$, there exists $z \in W_2$ such that $f(y)=g(z)$, simply because $g$ is surjective. Therefore, $(y, z) \in W$ and as $p(y, z)=y$, the surjectivity of $p$ is established. Secondly, $p$ is a Kripke morphism. To prove, by Lemma \ref{NormalityForMorph}, it is enough to show that $p \circ \pi=\pi_1 \circ p$ and $\pi_1^{-1}(\uparrow \!\!p(x,y))=p[\pi^{-1}(\uparrow \!\! (x,y))]$, for any $(x, y) \in W$. The first part, i.e., $p \circ \pi=\pi_1 \circ p$ is clear from the definition. For the second, for any $(x, y) \in W$ and $z \in W_1$, note that we have the following equivalences:
\[
z \in \pi_1^{-1}(\uparrow \!\!p(x,y)) \quad \text{iff} \quad x \leq_1 \pi_1(z),
\]
\[
z \in p[\pi^{-1}(\uparrow \!\! (x,y))] \quad \text{iff} \quad \exists u \in W_2 [(z, u) \in W \, \& \, (x, y) \leq (\pi_1(z), \pi_2(u))]. 
\]
Therefore, it is enough to prove the equivalence between the two right hand sides. The upward direction is clear. For the downward direction, if  $x \leq_1 \pi_1(z)$, then $(x, z) \in R_1$. As $f$ is a Kripke morphim, $(f(x), f(z)) \in R_0$. As $(x, y) \in W$, we have $f(x)=g(y)$. Hence, $(g(y), f(z)) \in R_0$. As $g$ is a Kripke morphism, there must be $u \in W_2$ such that $(y, u) \in R_2$ and $g(u)=f(z)$. Therefore, $(z, u) \in W$ and $y \leq_2 \pi_2(u)$ which completes the proof.
The proof to show that $p$ and $q$ are Heyting, if $f$ and $g$ are Heyting is similar.
\end{proof}

\begin{theorem}\label{Amalgamation}(Amalgamation) Let $C \subseteq \{R, L, Fa\}$. Then, the varieties $\mathcal{V}(C, D, N)$ and $\mathcal{V}_H(C, N)$ have the amalgamation property.
\end{theorem}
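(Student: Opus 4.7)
The plan is to dualize the problem: pass from the algebraic side to the frame side via the functor $\mathfrak{P}$, apply the already-proven amalgamation result for Kripke frames (Lemma \ref{AmalgamtionForKripke}) to obtain an amalgamating frame, and then pull the whole picture back to algebras via the contravariant functor $\mathfrak{U}$. The role of the canonical embedding $i_{\mathcal{A}} : \mathcal{A} \hookrightarrow \mathfrak{U}\mathfrak{P}(\mathcal{A})$ from Theorem \ref{KripkeEmbedding} is to ferry the $\mathcal{A}_i$ into the upset algebra of the amalgamated frame, and the naturality of $i$ in $\mathcal{A}$ will give the required commutativity.

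More concretely, given embeddings $f_1 : \mathcal{A}_0 \hookrightarrow \mathcal{A}_1$ and $f_2 : \mathcal{A}_0 \hookrightarrow \mathcal{A}_2$ in $\mathcal{V}(C, D, N)$, first I would apply $\mathfrak{P}$. By Theorem \ref{KripkeEmbedding}, the frames $\mathfrak{P}(\mathcal{A}_0), \mathfrak{P}(\mathcal{A}_1), \mathfrak{P}(\mathcal{A}_2)$ lie in $\mathbf{K}(C, N)$, and by Lemma \ref{SurjToInj} the Kripke morphisms $\mathfrak{P}(f_1)$ and $\mathfrak{P}(f_2)$ are surjective into $\mathfrak{P}(\mathcal{A}_0)$. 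Now Lemma \ref{AmalgamtionForKripke} produces a Kripke frame $\mathcal{K} \in \mathbf{K}(C, N)$ together with surjective Kripke morphisms $p : \mathcal{K} \twoheadrightarrow \mathfrak{P}(\mathcal{A}_1)$ and $q : \mathcal{K} \twoheadrightarrow \mathfrak{P}(\mathcal{A}_2)$ satisfying $\mathfrak{P}(f_1) \circ p = \mathfrak{P}(f_2) \circ q$. Set $\mathcal{B} := \mathfrak{U}(\mathcal{K})$; by Theorem \ref{UFunctor} this lies in $\mathbf{Alg}^H_\nabla(C, N)$, in particular in $\mathcal{V}(C, D, N)$ since any Heyting algebra is distributive. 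Finally, define
\[
g_1 := \mathfrak{U}(p) \circ i_{\mathcal{A}_1} : \mathcal{A}_1 \to \mathcal{B}, \qquad g_2 := \mathfrak{U}(q) \circ i_{\mathcal{A}_2} : \mathcal{A}_2 \to \mathcal{B}.
\]

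For the verification, injectivity of $g_1$ follows by composing the embedding $i_{\mathcal{A}_1}$ from Theorem \ref{KripkeEmbedding} with $\mathfrak{U}(p)$, which is injective by the second half of Lemma \ref{SurjToInj} applied to the surjection $p$; symmetrically for $g_2$. For commutativity, naturality of $i$ (Theorem \ref{KripkeEmbedding}) gives $i_{\mathcal{A}_1} \circ f_1 = \mathfrak{U}\mathfrak{P}(f_1) \circ i_{\mathcal{A}_0}$, so
\[
g_1 \circ f_1 = \mathfrak{U}(p) \circ \mathfrak{U}\mathfrak{P}(f_1) \circ i_{\mathcal{A}_0} = \mathfrak{U}\bigl(\mathfrak{P}(f_1) \circ p\bigr) \circ i_{\mathcal{A}_0},
\]
and the same computation for $g_2 \circ f_2$ yields $\mathfrak{U}(\mathfrak{P}(f_2) \circ q) \circ i_{\mathcal{A}_0}$. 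The equality $\mathfrak{P}(f_1) \circ p = \mathfrak{P}(f_2) \circ q$ from Lemma \ref{AmalgamtionForKripke} then forces $g_1 \circ f_1 = g_2 \circ f_2$.

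For the Heyting case $\mathcal{V}_H(C, N)$, the argument runs in parallel: $\mathfrak{P}$ sends Heyting $\nabla$-algebra embeddings to surjective Heyting Kripke morphisms (Theorem \ref{KripkeEmbedding}), the last clause of Lemma \ref{AmalgamtionForKripke} says the amalgamating $p, q$ are Heyting, and $\mathfrak{U}$ sends Heyting Kripke morphisms to Heyting $\nabla$-algebra morphisms (Theorem \ref{UFunctor}), with $i_{\mathcal{A}}$ itself a Heyting morphism. The main conceptual work has already been absorbed into Lemma \ref{AmalgamtionForKripke} (where the fiber product of frames must be shown to inherit $N$, $R$, $L$, $Fa$), so the present theorem is essentially a bookkeeping argument; the only point requiring slight care is tracing through the two applications of Lemma \ref{SurjToInj} (first to produce surjections of frames from embeddings of algebras, then to produce embeddings of algebras from surjections of frames) and correctly invoking naturality of $i$ to close the outer square.
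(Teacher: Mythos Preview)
Your proposal is correct and follows essentially the same route as the paper: apply $\mathfrak{P}$ to turn the algebraic span into a cospan of surjective Kripke morphisms, invoke Lemma \ref{AmalgamtionForKripke} to amalgamate on the frame side, then come back via $\mathfrak{U}$ and the natural embedding $i_{\mathcal{A}}$, setting $\mathcal{B}=\mathfrak{U}(\mathcal{K})$ and $g_j=\mathfrak{U}(p_j)\circ i_{\mathcal{A}_j}$. Your write-up even makes the commutativity check via naturality of $i$ more explicit than the paper's diagram-chase, and the Heyting case is handled identically.
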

\begin{proof}
We first prove the claim for $\mathcal{V}(C, D, N)$. Let $\mathcal{A}_{0}, \mathcal{A}_1$ and $\mathcal{A}_2$ be some $\nabla$-algebras in $\mathcal{V}(C, D, N)$ and $f_{1}:\mathcal{A}_0 \longrightarrow \mathcal{A}_1$ and $f_{2}: \mathcal{A}_0 \longrightarrow \mathcal{A}_2$ are some embeddings. Applying the functor $\mathfrak{P}$, by Theorem \ref{KripkeEmbedding}, we reach the normal Kripke models $\mathcal{K}_i=\mathfrak{P}(\mathcal{A}_i)$ in $\mathbf{K}(C, N)$, for $i=0, 1, 2$ and Kripke morphisms $f=\mathfrak{P}(f_1): \mathcal{K}_1 \to \mathcal{K}_0$ and $g=\mathfrak{P}(f_2): \mathcal{K}_2 \to \mathcal{K}_0$. By Lemma \ref{SurjToInj}, we know that the maps $f$ and $g$ are surjective. By Lemma \ref{AmalgamtionForKripke}, there exist a Kripke frame $\mathcal{K} \in \mathbf{K}(C, N)$ and surjective Kripke morphisms $p: \mathcal{K} \to \mathcal{K}_1$ and $q: \mathcal{K} \to \mathcal{K}_2$ such that $f \circ p=g \circ q$:
\[\small \begin{tikzcd}
	&& {\mathcal{K}_1=\mathfrak{P}(\mathcal{A}_1)} \\
	{\mathcal{K}} &&&& {\mathcal{K}_0=\mathfrak{P}(\mathcal{A}_0)} \\
	&& {\mathcal{K}_2=\mathfrak{P}(\mathcal{A}_2)}
	\arrow["p", two heads, from=2-1, to=1-3]
	\arrow["q"', two heads, from=2-1, to=3-3]
	\arrow["{f=\mathfrak{P}(f_1)}", two heads, from=1-3, to=2-5]
	\arrow["{g=\mathfrak{P}(f_2)}"', two heads, from=3-3, to=2-5]
\end{tikzcd}\]
Now, apply the functor $\mathfrak{U}$ to the previous diagram and use Thoerem \ref{UFunctor} to land inside $\mathcal{V}(C, D, N)$. By Lemma \ref{SurjToInj}, the maps $\mathfrak{U}(p)$ and $\mathfrak{U}(q)$ are embeddings. Therefore, as $i_{\mathcal{A}}: \mathcal{A} \to \mathfrak{U}\mathfrak{P}(\mathcal{A})$ is an embedding and also natural in $\mathcal{A}$, we have the following commutative diagram of $\nabla$-algebras in $\mathcal{V}(C, D, N)$:
\[\small \begin{tikzcd}
	{\mathcal{A}_0} &&& {\mathcal{A}_2} \\
	& {\mathfrak{UP}(\mathcal{A}_0)} &&& {\mathfrak{UP}(\mathcal{A}_2)} \\
	{\mathcal{A}_1} \\
	& {\mathfrak{UP}(\mathcal{A}_1)} &&& {\mathfrak{U}(\mathcal{K})}
	\arrow["{\mathfrak{UP}(f_2)}", hook, from=2-2, to=2-5]
	\arrow["{\mathfrak{UP}(f_1)}", hook, from=2-2, to=4-2]
	\arrow["{\mathfrak{U}(q)}", hook, from=2-5, to=4-5]
	\arrow["{\mathfrak{U}(p)}"', hook, from=4-2, to=4-5]
	\arrow["{i_{\mathcal{A}_0}}", hook, from=1-1, to=2-2]
	\arrow["{i_{\mathcal{A}_1}}", hook, from=3-1, to=4-2]
	\arrow["{i_{\mathcal{A}_2}}", hook, from=1-4, to=2-5]
	\arrow["{f_1}"', hook, from=1-1, to=3-1]
	\arrow["{f_2}", hook, from=1-1, to=1-4]
\end{tikzcd}\]
Now, it is enough to set $\mathcal{A}=\mathfrak{U}(\mathcal{K})$ and $g_1=\mathfrak{U}(p) \circ i_{\mathcal{A}_1}$ and $g_2=\mathfrak{U}(q) \circ i_{\mathcal{A}_2}$.\\
Finally, to address the case $\mathcal{V}_H(C, N)$, note that if $f_1$ and $f_2$ preserve the Heyting implication, then by Theorem \ref{KripkeEmbedding}, the maps $\mathfrak{P}(f_1)$ and $\mathfrak{P}(f_2)$ are Heyting. Then, by Lemma \ref{AmalgamtionForKripke}, the maps $p$ and $q$ are Heyting Kripke morphisms which implies that the maps $\mathfrak{U}(p)$ and $\mathfrak{U}(q)$ preserve the Heyting implication, by Theorem \ref{UFunctor}. Finally, as $i_{\mathcal{A}_1}$ and $i_{\mathcal{A}_2}$ also preserve the Heyting implication, by Theorem \ref{KripkeEmbedding},
the maps $g_1$ and $g_2$ preserve the Heyting implication, as well.
\end{proof}

\noindent \textbf{Acknowledgments:}
We wish to thank Nick Bezhanishvili for his helpful suggestions and the anonymous referees whose suggestions helped to improve our presentation. The first author also gratefully acknowledges the support of the FWF project P 33548. He is also supported by the Czech Academy of Sciences (RVO 67985840).

\bibliographystyle{plainurl}
\bibliography{newbib}

\end{document}